\documentclass[11pt,reqno]{amsart}
%\newlength\tindent
%\setlength{\tindent}{\parindent}
%\setlength{\parindent}{0pt}
%\renewcommand{\indent}{\hspace*{\tindent} \bigskip}
\usepackage[dvipsnames]{xcolor}
\usepackage{setspace}
\usepackage[hang,flushmargin,symbol*]{footmisc}
\usepackage{amsmath}
\usepackage{amsthm}
\usepackage{amssymb}
\usepackage{mathtools}
\usepackage{enumitem}
\usepackage{calc}
\usepackage{graphicx}
\usepackage{caption}
\usepackage[labelformat=simple,labelfont={}]{subcaption}
\usepackage{tikz}
\usepackage{tikz-cd}
\usetikzlibrary{decorations.markings}
\usetikzlibrary{arrows,shapes,positioning}
\usepackage{url}
\usepackage{array}
\usepackage{graphicx}
\usepackage{color}
\usepackage{mathrsfs}

\usepackage{verbatim}

\usepackage{subfiles}

%swap these to hide todonotes
\usepackage[colorinlistoftodos]{todonotes}

\setlength{\marginparwidth}{.8in}

\usepackage{dashbox}

\theoremstyle{theorem}
\newtheorem{theorem}{Theorem}[section]
\newtheorem{lemma}[theorem]{Lemma}

\newtheorem{corollary}[theorem]{Corollary}
\newtheorem{proposition}[theorem]{Proposition}

\theoremstyle{definition}
\newtheorem{definition}[theorem]{Definition}
\newtheorem{example}[theorem]{Example}
\newtheorem{remark}[theorem]{Remark}

\usepackage{amsfonts}
\usepackage{amsmath}
\usepackage{amssymb}
\usepackage{mathrsfs}
\usepackage{tikz}
\usepackage{tikz-cd}
\usepackage[colorlinks=true]{hyperref}
\usepackage{ mathdots }
\usepackage{dutchcal}
\usepackage{verbatim}
\usepackage{amsmath,amsthm}
\usepackage{extarrows}

%Lets me use if then statements and create separate moduli of curves commands for if there is or is no input
\usepackage{xparse}

\newcommand{\stquot}[1]{{[} #1 {]}}

%Comment out this package to make labels invisible
%\usepackage{showlabels}

\usepackage{multirow}

%strikethrough text
\usepackage[normalem]{ulem}
%strikethrough underlines in math mode unless you do this stupid modification:

\renewcommand{\tilde}[1]{\ensuremath{\widetilde{#1}}}

\newcommand{\pt}{{\rm pt}}

\newcommand{\ZZ}{\mathbb{Z}}
\newcommand{\CC}{\mathbb{C}}

\newcommand{\QQ}{{\mathbb{Q}}}
\newcommand{\OO}{\mathcal{O}}

\newcommand{\PP}{\mathbb{P}}
\newcommand{\GG}{\mathbb{G}}

\newcommand{\Spec}{{\text{Spec}\:}}

\newcommand{\Hom}{\text{Hom}}

\newcommand{\pb}{{\arrow[dr, phantom, very near start, "\ulcorner"]}}

\newcommand{\Ms}[1][{g, n}]{\overline{M}_{#1}}
\newcommand{\Mp}[1][{g, n}]{\frak M_{#1}}

\newcommand{\Mx}[1][{\Xi}]{\tilde{M}_{#1}(S^d X)}

\newcommand{\Ks}[1][{\Xi}]{\cal K^*_{#1}(S^d X)}
\newcommand{\Kt}[1][{\Xi}]{\tilde{\cal K}_{#1}(S^d X)}
\newcommand{\Kb}[1][{\Xi}]{\bar{\cal K}_{#1}(S^d X)}

\newcommand{\Mxpt}[1][{\Xi}]{\tilde{M}_{#1}(BS_d)}

\newcommand{\Kspt}[1][{\Xi}]{\cal K^*_{#1}(BS_d)}

\newcommand{\Kbpt}[1][{\Xi}]{\bar{\cal K}_{#1}(BS_d)}

%to make $M_{g, n+m}$ look good

\newcommand{\longequals}{\xlongequal{\: \:}}

\newcommand{\action}{\:\rotatebox[origin=c]{-90}{$\circlearrowright$}\:}

\newcommand{\num}[1]{{\langle #1 \rangle}}

\renewcommand{\frak}[1]{\ensuremath{\mathfrak{#1}}}
\newcommand{\cal}[1]{\ensuremath{\mathcal{#1}}}

\def\Sym#1#2{[\operatorname{Sym}^{#1}#2]}

\renewcommand{\bar}[1]{\overline{#1}}

\newcommand{\inc}{{\ensuremath{\rm inc}}}

\newcommand{\KpBSd}{\tilde{\frak K}_{\Xi}(BS_d)}
\newcommand{\KpBSdstar}{\tilde{\frak K}_{\Xi^*}^*(BS_d)}
\newcommand{\KsSymX}{\Kt}
\newcommand{\KsSymXstar}{\Ks}

\newcommand{\KsSymptstar}{\tilde {\cal K}_{\Xi^*}^*(BS_d)}

\newcommand{\Aut}{\underline{\text{Aut}}}

%brackets
\usepackage{stmaryrd}

\newcommand{\lcm}[1]{\mathrm{lcm}(#1)}

\definecolor{sebgreen1}{rgb}{0.019,0.317,0.149}
\definecolor{sebgreen2}{rgb}{0.784,0.952,0.780}

 %Todo notes for YP. 
 %Todo notes for You-Cheng

\newcommand{\Leo}[2][inline]{\todo[linecolor=purple,backgroundcolor=purple!25,bordercolor=purple,#1,shadow,author=Leo]{#2}} %Todo notes for LEO. 

%choice of the right co/homology theory to put our vfc's in and log variant. If we change our minds either about which one or what to call it, we can just change this macro
\newcommand{\HH}{K_\circ}

\newcommand{\scr}[1]{{\ensuremath{\mathscr{#1}}}}

\newcommand{\instackbar}[1]{\overline{\rm I}\left(#1\right)}

\newcommand{\bra}[1]{{\left[{#1}\right]}}

\title{{Higher Genus Quantum $K$-theory}}
\author{You-Cheng Chou, Leo Herr, Y.-P. Lee}
\date{\today}

\begin{document}

\email{bensonchou@gate.sinica.edu.tw, herr@math.utah.edu, yplee@math.utah.edu, ypleemath@gate.sinica.edu.tw}

\address{Institute of Mathematics, Academia Sinica, Taipei 106319, Taiwan}

\address{Department of Mathematics, University of Leiden, Snellius Gebouw, Niels Bohrweg 1, 2333 CA Leiden, Netherlands}

\address{Department of Mathematics, University of Utah, Salt Lake City, Utah 84112-0090, U.S.A.}

\maketitle

\setcounter{section}{-1}

\begin{abstract}
We prove genus $g$ invariants in quantum $K$-theory are determined by genus zero invariants of a smooth stack in the spirit of K.~Costello's result in Gromov--Witten theory. 
%Quantum $K$-invariants of a stack are defined after Abramovich-Graber-Vistoli. 
%Combinatorial formulas for stabilizing cotangent line bundles $L_i$ are also obtained. We conclude with a simple example on elliptic curves. 
\end{abstract}

\section{Introduction}

Let $X$ be a smooth quasiprojective variety over $\mathbb{C}$.
Let $\Ms[g, R](X, \beta)$ be the space of genus--$g$, $R$--pointed stable maps to $X$ with degree $\beta$. The perfect obstruction theory on $\Ms[g, R](X, \beta)$ \cite{intrinsic} endows the moduli stack with a ``virtual structure sheaf'' $\OO^{vir}_{\Ms[g, R](X, \beta)}$ \cite{QK1}. 

Let $\alpha_i \in K^\circ(X)$ and $L_i$ be the universal cotangent line bundles. When the insertion
\[
 \Omega := \sum_I a_I \prod_{i=1}^R L_i^{k_i} \otimes ev_i^* \alpha_i
\]
has an action by 
%(a subgroup of) 
$S_R$, the permutation-equivariant pushforward
\begin{equation} \label{e:qkidef}
 \sum_j (-1)^j H^j \Big(\Ms[g, R](X, \beta), \OO^{vir} \otimes \Omega \Big)
\end{equation}
%\Leo{Does this make sense? $\OO^{vir}$ is not a sheaf, but a $K$ theoretic class}
is an element in the Grothendieck group of  
%(or its subgroup) 
$S_R$-representations with $\oplus$, i.e., virtual representations. We can also take a subgroup of $S_R$ instead. These are by definition the  \emph{permutation-equivariant quantum $K$-invariants}. 
%\Leo{only if the $\alpha_i$'s have some symmetry} Here $L_i$ are the cotangent line bundles at the $i$-th marked point. 
% \sum_{i=0} (-1)^i H^i \Big(\Ms[g, R](X, \beta), \OO^{vir} \otimes \prod_{i=1}^R L_i^{k_i} \otimes ev_i^* \alpha_i \Big)  

The main theorem of this paper is the following.

\begin{theorem} [Theorem~3.1]
%\Leo{We never explicitly work with permutation equivariant version. I think variations of our techniques would work, but it's wrong to say that's a theorem that we prove. }
%???? Permutation-equivariant ????
Genus $g$ quantum $K$-invariants on $X$ can be computed from permutation-equivariant genus zero quantum $K$-invariants on 
\[
 \Sym {g+1} X = \bra{X^{g+1}/S_{g+1}}.
\]
A similar statement holds for $X$ a smooth DM stack with projective coarse moduli space.
%\Leo{I have to think about this. It may complicate the proof of some pullbacks we're borrowing from \cite{mycostellogeneralization}.}
%\YP{Should we try to nail these down this time?}
\end{theorem}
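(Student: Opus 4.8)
The plan is to follow the strategy of K.~Costello's Gromov--Witten theorem, the one genuinely new point being a structure-sheaf-level refinement of his ``multiplicity'' calculation; I use the auxiliary stacks $\widetilde{\mathfrak M}_{0,\bullet}(BS_{g+1})$ and $\widetilde M_{0,\bullet}(\Sym{g+1}X,\beta')$ discussed above.

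\textbf{Step 1 (The correspondence).} A morphism from a genus zero prestable curve $C_0$ to $\Sym{g+1}X=\bra{X^{g+1}/S_{g+1}}$ is the same datum as a finite flat degree $g+1$ cover $p\colon C\to C_0$ together with a map $f\colon C\to X$; when $C$ is connected of arithmetic genus $g$ this is precisely a genus $g$ stable map to $X$ together with a presentation of its source curve as a $(g+1)$-sheeted cover of a genus zero curve, and such a presentation exists for \emph{every} genus $g$ prestable curve --- the reason for the number $g+1$. I would introduce the smooth Artin stack $\mathfrak H$ of presentations $(C_0,C,p)$, identified (after passing to the associated $S_{g+1}$-torsor $\widetilde C\to C_0$) with an open substack of $\widetilde{\mathfrak M}_{0,\bullet}(BS_{g+1})$, with its forgetful morphisms $q_1\colon\mathfrak H\to\Mprel$ (remember $C$) and $q_2\colon\mathfrak H\to\widetilde{\mathfrak M}_{0,\bullet}(BS_{g+1})$. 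Adjoining the map to $X$ gives a space $\mathcal Z$ fitting into a correspondence
\[
\overline{M}_{g,n}(X,\beta)\ \xleftarrow{\ \rho\ }\ \mathcal Z\ \xrightarrow{\ \sigma\ }\ \widetilde M_{0,\bullet}(\Sym{g+1}X,\beta'),
\]
where $\sigma$ is an open--closed immersion onto the union of components on which the cover is connected, geometrically irreducible, of genus $g$; the other components, the classes $\beta'$ over $\beta$, and the bookkeeping between marked points and the unordered branch divisor are handled by generating functions as for Gromov--Witten invariants, and are also responsible for the permutation-equivariant ($S_R$-)structure on the $\Sym{g+1}X$ side.

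\textbf{Step 2 (Virtual structure sheaves).} The algebraic input is the isomorphism $\bar f^*T_{\Sym{g+1}X}\cong p_*\,f^*T_X$ of sheaves on $C_0$, which, $p$ being finite, gives $R\Gamma(C_0,\bar f^*T_{\Sym{g+1}X})=R\Gamma(C,f^*T_X)$. Hence the relative perfect obstruction theory of $\mathcal Z$ over $\mathfrak H$ is the $q_1$-pullback of that of $\overline{M}_{g,n}(X,\beta)$ over $\Mprel$, and as $\mathfrak H$ and $\Mprel$ are smooth and $\mathcal Z=\mathfrak H\times_{\Mprel}\overline{M}_{g,n}(X,\beta)$ is a Cartesian square carrying compatible obstruction theories, the base-change property of the $K$-theoretic virtual structure sheaf gives $\OO^{vir}_{\mathcal Z}=L\rho^*\OO^{vir}_{\overline{M}_{g,n}(X,\beta)}$. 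Choosing the genus zero insertion $\Omega^{\Sym{g+1}X}$ so that $\sigma^*\Omega^{\Sym{g+1}X}=\rho^*\Omega$ (the universal cotangent lines on the two sides agree away from branching, the discrepancy being pulled back from $\mathfrak H$), the projection formula then yields
\[
R\rho_*\!\big(\OO^{vir}_{\mathcal Z}\otimes\sigma^*\Omega^{\Sym{g+1}X}\big)\ =\ \OO^{vir}_{\overline{M}_{g,n}(X,\beta)}\otimes\Omega\otimes Rq_{1*}\OO_{\mathfrak H}.
\]

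\textbf{Step 3 (The correction class --- the main obstacle).} What remains is to understand the tautological class $Rq_{1*}\OO_{\mathfrak H}$ on $\Mprel$ --- the ``$K$-theoretic multiplicity'' that replaces Costello's cycle-level count --- and to invert its effect. Here is the heart of the difficulty: in Chow theory Costello needs only a generic dimension and irreducibility statement, robust under restriction, whereas in $K$-theory one must identify an entire (equivariant) derived direct image, and over smooth source curves the fibres of $q_1$ contain a Jacobian factor, so the fibrewise Euler characteristic is $0$, not $1$; concretely $Rq_{1*}\OO_{\mathfrak H}$ is, over the interior, essentially the $K$-theoretic Euler class $\lambda_{-1}(\mathbb E^\vee)$ of the dual Hodge bundle, with lower-genus analogues along the boundary. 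I would compute it by stratifying $\Mprel$ by topological type and evaluating $Rq_{1*}\OO$ stratum by stratum, using the pencil description of degree $g+1$ covers (so that one genuinely sees projective bundles over $\operatorname{Pic}^{g+1}$, with $PGL_2$ acting along the rational directions) and the $S_{g+1}$-equivariance; and I would then recover the invariants with arbitrary insertions by a triangular induction on $g$ and on the dimension of the stratum, using the multiplicativity of Hodge classes under boundary restriction, Mumford-type relations for $\mathbb E$, and the extra equations furnished by the permutation-equivariant formalism. This disentangling of the Hodge correction, rather than any single geometric input, is where I expect the real work to be.

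\textbf{Step 4 (Conclusion and the stack case).} Granting Step 3, taking Euler characteristics in the last displayed identity expresses the genus $g$ invariant \eqref{e:qkidef} in terms of permutation-equivariant genus zero $K$-invariants of $\Sym{g+1}X$ --- together, inductively, with lower-genus invariants of $X$, themselves reduced to genus zero invariants of smaller symmetric products --- and summing over the relevant components and over $\beta'\mapsto\beta$ gives the stated determination. For $X$ a smooth DM stack with projective coarse moduli space one repeats the argument verbatim with twisted curves, cyclotomic inertia and rigidified cotangent lines replacing their schematic counterparts: $\Sym{g+1}X=\bra{X^{g+1}/S_{g+1}}$ is again a smooth DM stack with projective coarse space, and the only new ingredient is the bookkeeping of the gerbe and twisted-sector structure, with no new geometry.
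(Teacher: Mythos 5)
Your Steps 1--2 set up a correspondence that is genuinely different from the paper's, and the difference is exactly what creates the gap in Step 3. You take $\mathfrak H$ to be the stack of \emph{all} presentations of a genus $g$ curve as a $(g+1)$-sheeted cover of a genus zero curve, so the fibre of $q_1$ over a smooth curve is (roughly) $\operatorname{Pic}^{g+1}$ and $Rq_{1*}\OO_{\mathfrak H}$ is a class of virtual rank zero --- as you note, essentially $\lambda_{-1}(\mathbb E^\vee)$, whose rank is $\sum_i(-1)^i\binom{g}{i}=0$. Such a class is a zero divisor in $K_\circ(\Mprel)_\QQ$ and cannot be inverted, so your displayed identity in Step~2 does not determine $\OO^{vir}\otimes\Omega$; the ``triangular induction'' you sketch to disentangle the Hodge correction is not carried out and it is not clear it can be, since $\lambda_{-1}(\mathbb E^\vee)$ degenerates further (acquires trivial summands in its kernel of multiplication) on boundary strata. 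This is not a technical loose end: as written, taking $\chi$ of your identity yields relations among Hodge-twisted invariants rather than a computation of \eqref{e:qkidef}.

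The paper avoids this problem at the level of the moduli construction rather than repairing it afterwards. It fixes discrete data $\Xi$: the cover $C'\to C$ is required to have prescribed ramification profile $\gamma$ over a marked point $\infty\in C$ with $\deg f^*\infty=g+1$, exactly $k=\#I+3g-1$ marked simple branch points, and unramified marked fibres over $J$, with all branch points marked on $C$. By Riemann--Roch a general genus $g$ curve with a general degree $g+1$ divisor $B$ admits exactly \emph{one} degree $g+1$ cover of $\PP^1$ with $f^*\infty=B$, so after passing to the partially unordered variant $\KpBSdstar$ the comparison map $\omega:\KpBSdstar\to\Mp[g,R]$ is proper, of pure degree one, and (after checking generic representability, Lemma~\ref{lem:propbirational}) birational. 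The $K$-theoretic Costello pushforward theorem then gives $w_*[\OO^{vir}_{\Ks}]=[\OO^{vir}_{\Ms[g,R](X)}]$ on the nose --- no correction class ever appears. The remaining work in the paper is then of a different nature from what you anticipate: comparing $\psi$ classes under partial stabilization (\S\ref{s:YCpsi}), expressing $\phi_*ev^*\alpha$ as a pullback along the genus zero evaluation map into $\instackbar{\Sym{d}X}$ (Lemma~\ref{lem:pedanticevalmap}), and organizing the finite group $\Gamma=(S_g)^J\times S_g\wr S_k$ reordering marked points into the permutation-equivariant formalism. If you want to salvage your route, the fix is to replace your $\mathfrak H$ by the space of covers with marked, constrained ramification so that $q_1$ becomes generically an isomorphism; otherwise Step 3 remains an open problem rather than a proof.
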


We believe the \emph{permutation-equivariant} genus-zero Quantum $K$-theory of $X$ can also be computed from that of $\Sym {g+1} X$ by extending our methods.

%A few comments are in order. While we believe that the statement of the above theorem holds as ``\emph{permutation-equivariant quantum $K$-theory of higher genus can be effectively reduced to permutation quantum $K$-theory of genus zero}'', our current proof does not exactly imply that. A little bit of work is needed for this extension.
%\YP{Leo, is this what you meant?}
%\Leo{This is great!}

When the target, $X$ itself or $\Sym {g+1} X$, is a Deligne--Mumford stack, the definition of $\Ms[g, R](X, \beta)$ involves twisted/orbifold curves and twisted stable maps. The domain curves are families of pointed nodal curves with cyclic gerbe structures at the marked points and nodes, such that the gerbe structures at the nodes are \emph{balanced}. This means they are locally stack quotients of  the node $R[x,y]/(xy)$ by the antidiagonal action
\[\zeta.(x, y) \coloneqq (\zeta x, \zeta^{-1} y), \qquad \zeta \in \mu_r.\]
Note that we do \emph{not} require the $\mu_r$-gerbe structures at the marked points to be trivial in families as in \cite{costello}. \emph{Twisted stable maps} are \emph{representable} morphisms from twisted curves to the target with finite automorphisms. $\Ms[g, R](X, \beta)$ are the moduli stacks of (twisted) stable maps with the discrete data $g, R, \beta$.

Hence, the marked points are no longer literal ``points,'' but gerbes. Due to the nontrivial gerbes at the marked points, the evaluation maps have the natural codomain a partially rigidified inertia stack $\instackbar{X}$, instead of the inertia stack $IX$. %\coloneqq X \times_{X \times X} X$. % This issue can be resolved and the insertion classes for the quantum $K$-invariants come from a variant $\instackbar{X}$ of $IX$. 
This has been done in works by \cite{Chen_Ruan_2002, Chen_Ruan_2004} and \cite{abram-graber-vistoli} in the context of cohomology/Chow.

We only need one class pulled back from $\instackbar{-}$ as opposed to ordinary $K$ theory, which comes from $\Sym k X$ for some $k \leq g+1$ in Section \S \ref{ss:evalmaps}. We do not need the full $K$--theory of $\instackbar{\Sym {g+1} X}$.

%\YP{Notation-wise, do we need to put the subscript $\bar{\mu}$? I don't think we fixed $\mu$?}
%\Leo{Notation is changed}
%\Leo{I think we use $\instackbar{X}$ instead, a relative.}

The appearance of permutation-equivariant $K$-theory is quite natural, not simply a ``technical clutch''. In cohomological Gromov--Witten theory, we often rely on the fact that the substacks (``strata'') appearing in ``common operations'' (e.g., fixed point loci in torus localizations or the components of inertia stacks of the moduli) are \emph{variants} of known quantities in the sense of induction. These variants can be identified with the actual known quantities in Gromov--Witten theory by simple modifications. For example, for the purpose of computing Gromov--Witten invariants, 
\[
 \int_{[\overline{M} / S_n ]} ... = \frac{1}{n!} \int_{[\overline{M}]} \pi^*(...).
\]
These equalities are no longer true in Quantum $K$ theory. In fact,
\[
 \chi ([\overline{M} / S_n ], ...) = \chi_{S_n}(\overline{M}, \pi^*(...) )^{S_n} .
\]
We note that $K$-theory on $[\overline{M} / S_n ]$ can be identified with the $S_n$-equivariant $K$-theory on $\overline{M}$, and $\chi_{S_n}(-)^{S_n}$ is the pushforward in the $S_n$-equivariant theory, i.e., the $S_n$-invariant part $(-)^{S_n}$ of alternating sum of sheaf cohomologies viewed as $S_n$ representations $\chi_{S_n}(-)$. This necessitates permutation-equivariant quantum $K$-theory.

Quantum $K$-theory has already been defined for stacks in e.g., \cite{kthytoricstacks1} and \cite[\S 2.4]{kthytoricstacks}. A comparison of the quantum $K$-theories with trivial and nontrivial gerbes at marked points can be found in \cite[Remark 2.8]{kthytoricstacks}. See also \cite[\S 4.4, 4.5]{AGVproceedingsfirstversion} in the cohomological context. We allow nontrivial gerbes and recall the basic definitions in Section \S \ref{ss:gwkthyforstacks}. 

%\YP{1. Do we need $K(IY)$ or only $K(Y)$ in our insertions? Here $Y =\Sym {g+1} X$ or $X$ etc..}
%\Leo{Good question. I added something about it above when we introduced the inertia stack. We need slightly more than just $K(\Sym d X)$, but only slightly, and I said as much.}

%\Leo{My best interpretation of what you were saying is that there is a map from each component $Y_c \subseteq IY$ to $Y$ itself $i : Y_c \to Y$. If $Y = V/G$, then $Y_c$ looks like a closed subset of $V$ mod a subgroup of $G$. Since we're factoring through a particular component $Y_c$, maybe $i^* i_* \alpha = \alpha$ and we can pull back this class to $\Ms(X)$ to integrate instead. This may or may not be true, but there's two subtleties: 1) The map $\Ms(X) \to Y_c \to Y$ is no longer really the evaluation map at the marked point in the usual sense, just some map 2) We're not using inertia stacks. We're using something a bit more complicated, $\instackbar{X}$. The story is not as simple for this. }

%\YP{Not what I meant. Not to over-generalize, do we really need to use any class which can not be pulled back from $K(Y)$? If so, where?}

%\YP{2. Further discussions on the codomain of evaluation maps, cf.\ Sections 4.4 and 4.5 in AGV proceeding paper }
%\Leo{I added a citation. }

Quantum $K$-invariants are roughly Gromov--Witten invariants computed in $K$-theory instead of cohomology or Chow groups. The idea of computing genus-$g$ Gromov--Witten invariants of any smooth projective variety $X$ in terms of genus 0 quantum $K$-invariants of quotient stack $\Sym {g+1} X$ goes back to M.~Kontsevich and was independently obtained in K.~Costello's thesis \cite{costello}. This paper can be considered as a $K$-theoretic version of this circle of ideas.

%There are many ways that this reduction from higher genus to genus zero might prove useful in the future. 
The calculation of genus zero Quantum $K$-theoretic invariants is simpler and self-contained, while the higher genus invariants necessarily involve invariants of lower genus. Genus-zero quantum $K$-theory is much better understood, with additional \emph{finite difference} structure in addition to the usual $D$-module structure.

Quantum $K$-theory has 
%become a more visible topic in recent years through its 
connections with modern enumerative geometry, integrable systems, representation theory, geometric combinatorics and theoretical physics. Its influence on theoretical physics is largely its relation to $3$ dimensional topological field theory. See the pioneering works of N.~Nekrasov, H.~Jockers, P.~Mayr etc. (\cite{jockerspeterquantumkphysics}, \cite{jockerspeterquantumkphysics2}, and references therein.) For its connection to representation theory, see \cite{Okounkov_lectures} and references therein. At the very onset of the quantum $K$-theory, it was intimately connected to integrable systems. See, for example, \cite{Givental_Lee}. It has also inspired much progress in geometric combinatorics through works like \cite{Buch_2011, Buch_2013, Buch_2020} of A.~Buch, P.~Chaput, C.~Li, L.~Mihalcea, N.~Perrin and many others. Most of these works are in \emph{genus zero}. We hope that our algorithm will prove useful in the further development of higher genus quantum $K$-theory.

%\Leo{Add motivation. Why do we want to do this?}

We work exclusively with schemes, stacks, etc. locally of finite type over the complex numbers $\CC$. In particular, they are locally noetherian.

\subsection*{Acknowledgments}
The authors wish to thank 
Sarah Arpin,
Peter Bakic, 
Sebastian Bozlee,
Renzo Cavalieri, 
Kevin Costello,
Dan Edidin,
Adeel Khan, 
Rufus Lawrence, 
Sam Molcho, 
Hsian-Hua Tseng,
Jonathan Wise, 
and the Mathoverflow community for their help and support.
This project originated in the second author's work with J.~Wise \cite{mycostellogeneralization}. %on K.~Costello's pushforward theorem in Chow groups  %Costello's original pushforward theorem was widely used but wrong as stated, so they corrected it and fixed twenty sample applications of the theorem. We then proved Costello's pushforward theorem in $K$-theory, and this is our first sample application. 
%\YP{If Leo agrees, I prefer not to comment on Costello's pushforward formula. It should be left to the Herr--Wise paper, no? Furthermore, didn't we already use it in the product paper (to a lesser degree)?}

The first and the third authors wish to acknowledge the partial supports from Academia Sinica, National Science and Technology Council, Simons Foundation and the University of Utah. The second author thanks Leiden University, University of Utah and the National Science Foundation for its support through RTG Grant \#1840190. 
%\Leo{I have to put this}

\section{Higher genus quantum $K$-invariants}

Let $C'$ be a general genus $g$ smooth curve with a general divisor $B$ of degree $d=g+1$. There is exactly one ramified cover $f : C' \to \PP^1$ of degree $d$ with ramification divisor $B = f^* \infty$ over infinity by Riemann-Roch \cite[Lemma 6.0.1]{costello}, \cite[Theorem 3.12]{mycostellogeneralization}. The following facts come from K.~Costello \cite{costello}.
\begin{itemize}
    \item This entails a birational map between moduli spaces (Lemma \ref{lem:propbirational}). 
    \item By adding stack structure $\tilde{\cdot}$, we can make $f : \tilde C' \to \tilde{\PP^1}$ a finite \'etale cover. This is pulled back along a map $\tilde{\PP^1} \to BS_d$ from a stacky genus zero curve to the moduli space $BS_d$ of finite \'etale degree-$d$ covers. 
\end{itemize}

We can similarly interpolate between genus-$g$ and genus-zero maps to a fixed smooth, quasiprojective target $X$.

\begin{remark}

%\marginnote{I changed $T' \to T$ to $T' \to X$. $\num{d}$ not yet defined?}
Write $\num{d} = \{1, 2, \dots, d\}$ for the ordered set of $d$ elements. An $S_d$--torsor $P \to X$ is equivalent to the data of a finite étale degree $d$ cover 
\[T' := (P \times \num{d})/S_d \to X. \]
%induced via the antidiagonal quotient
%These are pulled back from the universal $d$-sheeted cover $E = (\pt \times_{BS_d} \num{d})/S_d$. The stack $BS_d$ can be thought of as the moduli stack of finite étale degree $d$ covers. 
The universal $d$-sheeted cover 
\[ (\pt \times \num{d})/S_d \to BS_d \] 
%\marginnote{Do we really need this non-canonical identification?}
%\Leo{Yes! It helps me. }
can be non-canonically identified with the map $BS_{d-1} \to BS_d$ induced by any of the $d$ inclusions $S_{d-1} \subseteq S_d$.
\end{remark}

Consider a twisted, representable stable map $\tilde C' \to X$ together with a finite \'etale degree-$d$ cover $\tilde C' \to \tilde C$ of a stacky curve $\tilde C$ of genus zero. To promote $\tilde C'$ to a marked curve, we need only order the fibers of the marked points of $\tilde C$.

Our data is pulled back from the universal finite \'etale degree-$d$ cover mapping to $X$:
\[
\begin{tikzcd}
%\tilde C' \ar[r] \ar[d] \pb      &X \times \Sym {d-1} X \ar[d]      \\
\tilde C' \ar[r] \ar[d] \pb      &{[ (X^d \times \num{d})/S_d ]} \ar[d]      \\
\tilde C \ar[r]        &\Sym d X = [X^d /S_d ],
\end{tikzcd}
\]
and the whole diagram has finitely many automorphisms over the right arrow if and only if the map $\tilde C \to \Sym d X$ is stable.

\begin{definition}

The stack $\tilde {\cal K}_{0, n}(\Sym d X)$ parameterizes families $\tilde C \to S$ of twisted curves of genus zero with $n$ marked points and a representable map $\tilde C \to \Sym d X$ together with an ordering of the fibers over the marked points. The marked points of $\tilde C$ may be nontrivial gerbes over $S$. 
    
\end{definition}

%\marginnote{not precise. either write down details or quote Costello with specific page numbers.}
The stack $\tilde {\cal K}_{0, n}(\Sym d X)$ equivalently parametrizes families of ramified $d$-sheeted covers $C' \to C$ together with maps $C' \to X$ that have finitely many automorphisms. All ramification points are marked and the fibers above the marked points of $C$ are all the marked points of $C'$. By ``ordering of the fibers,'' we mean that the fibers of $C'\to C$ over each marked point of $C$ must be ordered, a torsor for a product of symmetric groups. We later consider variants where less of the marked points of $C'\to C$ are ordered; see Figure \ref{fig:zoostacks}.

Our twisted/stacky stable maps and curves are different from \cite{costello}. For families of curves $\tilde C \to S$ over a base scheme $S$, the $i$th marked point of $\tilde C$ may be a \textit{nontrivial} $\mu_{r_i}$ gerbe, for $r_i \in \ZZ_{\geq 1}$. We fix the orders $r_i$ later.

We want to apply the $K$-theoretic version of Costello's pushforward formula \cite[Theorem 2.7]{logquantumkproductchouherrlee} to a square from \cite[\S 3.2]{mycostellogeneralization} introduced in \S \ref{ss:costellosquare}:

\begin{equation}\label{eqn:costellosquare}
\begin{tikzcd}
\KsSymX \ar[r, "q"] \ar[d, "\pi'", swap] \ar[dr, phantom, very near start, "\ulcorner"]     &\Ms[g, R](X) \ar[d, "\pi"]     \\
\KpBSd \ar[r, "p", swap]      &\Mp[g, R].
\end{tikzcd}\end{equation}
% \begin{comment}
% \begin{equation}\label{eqn:costellosquare}
% \begin{tikzcd}
% \MsSymG \ar[r] \ar[d] \pb     &\Ms[g, dn-A](X) \ar[d]        \\
% \MprelSdG \ar[r, "p'"]       &\Mp[g, dn-A].
% \end{tikzcd}    
% \end{equation}
% \end{comment}

The stacks $\Ms[g, R](X)$ of stable maps and $\Mp[g, R]$ of prestable curves are standard. We do not fix a curve class $\beta$, so this space is a disjoint union over choices of $\beta$. 

We denote $\KsSymX \subseteq \tilde {\cal K}_{0, n}(\Sym d X)$ a substack with appropriate discrete invariants fixed in \S \ref{ss:specxi}. The stack $\KpBSd$, denoted $\tilde{\frak M}_{0, n}(BS_d)$ in \cite[before Lemma 3.6]{mycostellogeneralization}, is approximately the stack of prestable maps $\tilde C \to BS_d$ from the genus zero twisted base curves parameterized in $\KsSymX$.

The obstruction theory for $\pi'$ is pulled back from $\pi$. The problem is that $p$ is of degree
\[e = k!(g!)^{\#J}(g!)^k,\]
while the $K$-theoretic virtual pushforward formula so far only applies to birational maps. We decompose $p$ as a finite \'etale torsor of degree $e$ composed with a birational map to which the pushforward formula applies.

\subsection{Costello's Square \eqref{eqn:costellosquare}} \label{ss:costellosquare}

We describe \eqref{eqn:costellosquare}. Write $\num d = \{1, 2, \dots, d\}$. A subset $A \subseteq \num{\ell}$ will be fixed later; the symbols $\Ms[g, R](X)$, $\Mp[g, R]$ refer to moduli stacks of ordinary stable maps and prestable curves with $R = \ell - \# A$ marked points. We do not fix the curve class $\beta$ for simplicity. We assume $R \geq 1$. 

The substack $\Kt \subseteq \tilde {\cal K}_{0, n}(\Sym d X)$ parametrizes stable maps of genus-zero curves to $\Sym d X$, identified with triples $C \leftarrow C' \to X$ above. The $\Xi$ refers to fixed discrete invariants fixed in \ref{ss:specxi}: ramification profiles of $C' \to C$, the numbers $n$ and $\ell$ of marked points for $C$ and $C'$, the genus of $C'$, and the degree $d$ of $C' \to C$. The number $\ell \leq dn$ is the sum of the degrees of the fibers in the ramification profiles. These invariants satisfy Riemann-Hurwitz to ensure the space is nonempty:
\begin{equation}\label{eqn:riemannhurwitz}
2g-2 = -2 d + \sum_{P \in C'} (e_P -1).    
\end{equation}

The functor $q$ forgets the marked points $A \subseteq \num{\ell}$ of $C'$ and then takes the \textit{stabilization} $\overline{C}' \to X$ of the resulting map $C' \to X$.

The map $\pi'$ forgets the stable map to $X$. To make the diagram commute, $\pi'$ must remember the stabilization $\overline{C}'$ of $C' \to X$. Define the stack $\KpBSd$ of triples $C \leftarrow C' \to D$, where $C' \to C$ is a ramified cover of type $\Xi$ and $C' \to D$ a partial stabilization after forgetting $A \subseteq \num{\ell}$. The map $p$ sends this triple to $D$. The square \eqref{eqn:costellosquare} is cartesian and $p$ is proper by Lemma 3.9 and Corollary 3.7 of \cite{mycostellogeneralization}, respectively.

\begin{remark}

The degree $e = k! (g!)^{\# J} (g!)^k$ differs from both \cite[Theorem 3.12]{mycostellogeneralization} and \cite[Lemma 6.0.1]{costello}. Our use of nontrivial gerbes instead of trivialized gerbes accounts for the difference from \cite{mycostellogeneralization}. Costello's version is reconciled in Remark 3.15 of loc. cit. Our degree can be computed using the proof of Theorem 3.12 of loc. cit. or by taking into account the degrees of the universal gerbes. 

The degree $e$ is the order of a group $\Gamma = (S_g)^J \times S_g \wr S_k$ that reorders marked points of $C' \to C$ discussed in \S \ref{ss:Kstar}. 

The stabilization $\overline{C}' \to X$ was omitted in \cite{costello}, leading to non-proper moduli stacks or noncommutative diagrams. This could be rectified using his technology of weighted graphs instead of our partial stabilizations. 

\end{remark}

\subsection{Specifying $\Xi$}\label{ss:specxi}

\begin{figure}
    \centering
\begin{tikzpicture}
\node at (-1, .75){$C'$};
\draw[->] (-1, .45) to (-1, -1.25);
\node at (-1, -1.5){$C$};
\draw[-] (0, 0) to (3, 0);
\draw[-] (0, 0.5) to (3, 0.5);
\draw[-] (0, 1) to (3, 1);
\draw[-] (0, 1.5) to (3, 1.5);
\draw (3, 1) arc (-90:90:.25);
\draw (3.5, 1.5) arc (90:270:.25);
\draw[-] (3.5, 1.5) to (6, 1.5);
\draw (4, .5) arc (-90:90:.25);
\draw (4.5, 1) arc (90:270:.25);
\draw (5, 0) arc (-90:90:.25);
\draw (5.5, 0.5) arc (90:270:.25);
\draw[-] (3.5, 1) to (4, 1);
\draw[-] (4.5, 1) to (7, 1);
\draw[-] (3, .5) to (4, .5);
\draw[-] (4.5, .5) to (5, .5);
\draw[-] (5.5, .5) to (6, .5);
\draw[-] (3, 0) to (5, 0);
\draw[-] (5.5, 0) to (7, 0);
\draw (6, 0.5) arc (-90:90:.5);
\draw (7, 1.5) arc (90:270:.5);
\draw[-] (0, -1.5) to (7, -1.5);
\node[below] at (1.5, -2){$J$};
%nodes on base for J
\fill[white] (0.25, -1.5) circle (.07cm);
\fill[white] (1.25, -1.5) circle (.07cm);
\fill[white] (1.95, -1.5) circle (.07cm);
\draw (0.25, -1.5) circle (.07cm);
\draw (1.25, -1.5) circle (.07cm);
\draw (1.95, -1.5) circle (.07cm);
%nodes on source for J
\fill (0.25, 0) circle (.05cm);
\fill (1.25, 0) circle (.05cm);
\fill (1.95, 0) circle (.05cm);
\fill (0.25, 0.5) circle (.05cm);
\fill (1.25, 0.5) circle (.05cm);
\fill[white] (1.95, 0.5) circle (.07cm);
\draw (1.95, 0.5) circle (.07cm);
\fill (0.25, 1) circle (.05cm);
\fill[white] (1.25, 1) circle (.07cm);
\draw (1.25, 1) circle (.07cm);
\fill (1.95, 1) circle (.05cm);
\fill[white] (0.25, 1.5) circle (.07cm);
\draw (0.25, 1.5) circle (.07cm);
\fill (1.25, 1.5) circle (.05cm);
\fill (1.95, 1.5) circle (.05cm);
%nodes on source for k
\fill (3.25, 0) circle (.05cm);
\fill (3.25, 0.5) circle (.05cm);
\fill (3.25, 1.25) circle (.05cm);
\fill (3.25, 0) circle (.05cm);
\fill (3.25, 0.5) circle (.05cm);
\fill (3.25, 1.25) circle (.05cm);
\fill (4.25, 0) circle (.05cm);
\fill (4.25, 0.75) circle (.05cm);
\fill (4.25, 1.5) circle (.05cm);
\fill (5.25, 0.25) circle (.05cm);
\fill (5.25, 1) circle (.05cm);
\fill (5.25, 1.5) circle (.05cm);
%nodes on base for k
\fill (3.25, -1.5) circle (.07cm);
\fill (4.25, -1.5) circle (.07cm);
\fill (5.25, -1.5) circle (.07cm);
\node[below] at (4.5, -2){$k$};
%nodes on source for I/\infty
\fill[white] (6.5, 1) circle (.07cm);
\draw (6.5, 1) circle (.07cm);
\fill[white] (6.5, 0) circle (.07cm);
\draw (6.5, 0) circle (.07cm);
%node on base for \infty
\fill[white] (6.5, -1.5) circle (.07cm);
\draw (6.5, -1.5) circle (.07cm);
\node[below] at (6.5, -2){$\infty$};
\end{tikzpicture}
    \caption{\cite[Figure 2]{mycostellogeneralization}. A cover in $\Xi$, $g=3, d=4$. Marked points are colored black if forgotten $A \subseteq \num \ell$ and white if remembered under the map to $\Mp[g, R]$. The space $\KpBSdstar$ forgets the ordering on the black marked points. }
    \label{fig:excostellodiscretedata}

\end{figure}

We unpack our discrete data:
\[\Xi = 
\left\{\begin{tikzcd}[row sep=small]
g(C') = g, \,g(C) = 0, \,d = g+1,        \\
\num{\ell} \to \num{n} \text{ is } \num{k} \times \num{d} \overset{pr_1}{\mapsto} \num{k},\,\, J \times \num{d} \overset{pr_1}{\mapsto} J,\,\, I \mapsto \infty, \\
\forall j \in J,\,\, r_j = 1, \forall j \in \num{k}\,\, r_j = 2, \gamma : I \to \ZZ_{\geq 1}, r_{\infty} = \lcm{\gamma(i)},       \\
\bigsqcup_J B\mu_1 = * \to BS_d,\,\, \bigsqcup_{i \in \num{k}} B\mu_2 \overset{\phi}{\to} BS_d
\end{tikzcd}\right\}\]
See Figure \ref{fig:excostellodiscretedata} for an example.

Ramification profiles are specified by an action of $\mu_r$ on an unordered set of size $d$. Take a small loop around $p \in C$, and its lifts to $C'$ identify which of the $d$ sheets come together over $p$. Encode this action in a map $B\mu_r \to BS_d$ up to isomorphism.

\begin{remark}\label{rmk:mapsofgerbes}

The category of maps $BG \to BH$ has 
\begin{itemize}
    \item Objects: homomorphisms $f : G \to H$
    \item Morphisms $f_1 \to f_2$: elements $h \in H$ which conjugate one morphism to another $f_1 = h f_2 h^{-1}$. They are all isomorphisms. 
\end{itemize}
Objects of the category $\Hom(B\mu_r, BS_d)$ can be identified with actions $\mu_r \action \num{d}$. Isomorphisms between two such actions are relabelings of the set $\num{d}$ of $d$ elements. So an isomorphism class of functors $B\mu_r \to BS_d$ is an action of $\mu_r$ on an unlabeled set with $d$ elements. 

The action $\mu_r \action \num{d}$ contains the information of a ramification point $C' \to C$ of a map of curves. The stacky quotient $\bra{\num{d}/\mu_r}$ is the fiber of $\tilde C'\to \tilde C$ over the point $B\mu_r \in C$. To extract the set-theoretic fiber, we take the coarse moduli space $\num{d}/\mu_r$. This gives an unlabeled set with some number of elements between 1 and $d$. In families, $B\mu_r$ is allowed to be a nontrivial gerbe. 

\end{remark}

A point $p \in C$ is \emph{simply ramified} if its fiber consists of $d-1$ points, where exactly two of the $d$ sheets come together and the other points in the fiber are unramified. This corresponds to the action $\mu_r \action \num{d}$ with one 2-cycle and the rest of the points fixed, up to reordering $\num{d}$.

Let $k \geq 0$ be an integer, $g = g(C')$ be the genus of $C'$, and fix the degree $d = g + 1$. Divide the $n$ marked points of $C$ into three sets:
\begin{itemize}
    \item[$\{\infty\}$]: Write $I \subseteq C'$ for the fiber over this point $\infty \in C$. This point has ramification described by a map $B\mu_{r_\infty} \to BS_d$ or function $\gamma : I \to \ZZ_{\geq 1}$. That is, $\gamma(i)$ is the size of the stabilizer of $i$ in the corresponding action $\mu_{r_\infty} \action \num{d}$. 
    
    \item[$J$]: These points $J \subseteq C$ have no ramification. 
    
    \item[$\num{k}$]: These points have simple ramification. 
\end{itemize}

\begin{comment}
All the marked points of $C$ except possibly one $\infty \in C$ will either be simply ramified or unramified. Let $I \subseteq C'$ be the fiber over $\infty \in C$ and let $\gamma : I \to \ZZ_{\geq 1}$ encode the stack structure/ramification at each point. That is, $\gamma(i)$ is the size of the stabilizer of $i$ in the above action $\mu_r \action \num{d}$. 
\end{comment}

This gives partitions:
\begin{align*}
    \num{n} &= J \sqcup \num{k} \sqcup \{\infty\},      \\
    \num{\ell} &= J \times \num{g+1} \sqcup \num{kg} \sqcup I.
\end{align*}
The map $\num{\ell} \to \num{n}$ on marked points is compatible with these partitions.

The $j$th marked point of $C$ is a $\mu_{r_j}$-gerbe, where $r_j = 1$ at unramified points $j \in J$, $r_j = 2$ for the $k$ simple ramification points and $\infty$ has $r_\infty = \lcm{\gamma(i)}$ the least common multiple of the ramification function $\gamma$ on $I$. These data are subject to a constraint easier seen with trivialized gerbes: the sum
\[B\ZZ \to BS_d\]
of the classifying space maps from all the composites $\ZZ \to \mu_r \to S_d$ be zero, lest the space be empty. This corresponds to the presentation of $\pi_1(\PP^1 \setminus \num{n})$ via generators whose product is trivial. 

Let $A \subseteq \num{\ell}$ consist of all of $\num{kg}$, none of $I$, and a subset of $J \times \num{g+1}$ such that $J \times \num {g+1} \setminus A \to J$ is a bijection. Note that the set $\num{\ell} \setminus A = I \sqcup J$ has $R$ elements.

Take  $k = \#I + 3g -1$ so that all the dimensions agree \cite[Theorem 3.12]{mycostellogeneralization}:
\[\dim \KpBSd = \dim \Mp[g, R].\]

We can now prove our main equality between virtual fundamental classes. We first recall their definition in $K$-theory.

\subsection{$K$-theory}

Let $Y$ be a finite type noetherian algebraic stack. The $K$-theory of $Y$ is the $K$-theory of a category of lis-ét sheaves on $Y$, for which there are two main options:
\begin{itemize}
    \item $K_\circ(X)$: coherent sheaves, otherwise known as $G$-theory $G(Y)$. 
    \item $K^\circ(Y)$: locally free sheaves of finite rank. 
\end{itemize}
We work with $\QQ$-coefficients, tensoring these groups up to $\QQ$-vector spaces
\[K_\circ(X) = K_\circ(X) \otimes \QQ, \qquad K^\circ(Y) = K^\circ(Y) \otimes \QQ. \]

These groups are generated by classes $[F]$ of coherent/locally free lis-ét sheaves $F$ on $Y$, modulo relations $[F'] + [F''] = [F]$ for each exact sequence
\[0 \to F' \to F \to F'' \to 0.\]
See \cite[\S 1]{logquantumkproductchouherrlee} for discussion. 

The groups $K^\circ, K_\circ$ coincide on $X$ whenever every coherent sheaf $F$ admits a finite resolution by locally free sheaves. Under certain hypotheses on $X$, this is equivalent to $X$ being a quotient stack \cite[Remark 2.15]{brauergroupsandquotientstacks}.

Let $f : X \to Y$ be a map between finite type noetherian algebraic stacks. Pullback and pushforward of sheaves sometimes induce maps on $K^\circ$ and $K_\circ$. 
\begin{itemize}
    \item $K^\circ$: pullback $f^*$ always exists and pushforward $f_*$ makes sense when $X \to Y$ is finite étale.
    \item $K_\circ$: pullback $f^*$ exists when $f$ is flat. Armed with a perfect obstruction theory, we can also define a pullback $f^!$ even if $f$ is not flat. 
    
    If $f$ is proper and of DM type, define the pushforward $f_*$ on $K_\circ$ theory as the alternating sum
    \begin{equation}\label{eqn:pfwddmtypekthy}
    f_* F \coloneqq \sum_i (-1)^i R^i f_* F.
    \end{equation}
    We must check that this sum is finite. 
\end{itemize}

When the map $f : X \to Y$ is clear from context, we write $\beta|_X = f^* \beta$ for classes $\beta \in K^\circ(Y)$ or $\beta \in K_\circ(Y)$ without risk of confusion.

\begin{lemma}

Let $p : X \to Y$ be a proper, DM type morphism between finite type noetherian algebraic stacks. The pushforward 
\[p_* : K_\circ(X) \to K_\circ(Y).\] 
of \eqref{eqn:pfwddmtypekthy} is well-defined on $K_\circ$ theory.

\end{lemma}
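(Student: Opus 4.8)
The plan is to reduce to a situation where classical coherence and finiteness results apply. The only thing to check is that the alternating sum in \eqref{eqn:pfwddmtypekthy} is finite, i.e. that for a coherent sheaf $F$ on $X$ the higher direct images $R^i p_* F$ vanish for $i \gg 0$ and are themselves coherent, and that the assignment $F \mapsto \sum_i (-1)^i R^i p_* F$ respects short exact sequences. The last point is formal: a short exact sequence $0 \to F' \to F \to F'' \to 0$ yields a long exact sequence in $R^i p_*$, and taking the alternating sum of its terms (once we know only finitely many are nonzero) gives $0$, so $p_*[F] = p_*[F'] + p_*[F'']$ in $K_\circ(Y)$; thus $p_*$ descends to the Grothendieck group. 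So the heart of the matter is coherence and bounded cohomological dimension of $Rp_*$.

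First I would recall that ``DM type'' means that for any map $T \to Y$ from a scheme (or algebraic space), the fiber product $X \times_Y T$ is a Deligne--Mumford stack; together with properness of $p$ this makes $p$ a proper morphism representable by proper DM stacks. Since the statement to be proved is local on $Y$ (both coherence of $R^ip_*F$ and the bound on $i$ can be checked on a smooth atlas, using flat base change $u^* R^i p_* F \cong R^i p'_* (u'^* F)$ for the smooth cover $u : U \to Y$), I may assume $Y = U$ is a finite type noetherian scheme and $X$ is a proper DM stack over $U$. Now I would invoke the standard coherence theorem for proper morphisms of noetherian DM stacks: $R^i p_* F$ is coherent on $U$ for every $i$, and it vanishes for $i$ larger than the cohomological dimension of $p$. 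The cleanest way is to pass to the coarse moduli space $c : X \to \bar X$, which exists by Keel--Mori since $X$ is a separated finite type DM stack with finite inertia (after shrinking $U$ if necessary so that $X \to U$ has finite relative inertia — automatic here since $p$ is of DM type), and is proper over $U$; for DM stacks in characteristic $0$ the functor $c_*$ is exact on quasi-coherent sheaves and preserves coherence, and $R^ic_* = 0$ for $i > 0$, so $R^i p_* F = R^i \bar p_*(c_* F)$ where $\bar p : \bar X \to U$ is the proper morphism of noetherian schemes (or algebraic spaces). Then ordinary Grothendieck coherence and the Grothendieck vanishing theorem (cohomological dimension $\le \dim \bar X$) finish it.

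Alternatively, and perhaps more in the spirit of the references already cited in the excerpt, I would simply cite the coherence and finite cohomological dimension statement for proper morphisms of noetherian algebraic stacks that are representable by DM stacks — this is in the literature (e.g.\ in the Stacks Project's treatment of cohomology of algebraic stacks, or in Olsson's book) — and then assemble the two pieces: (i) finiteness, which lets \eqref{eqn:pfwddmtypekthy} make sense as an element of $K_\circ(Y)$ since each $R^i p_* F$ is coherent and only finitely many are nonzero; (ii) exactness, i.e.\ the long exact sequence argument above showing the Euler characteristic is additive on short exact sequences, hence factors through the defining relations of $K_\circ(X)$.

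The main obstacle, such as it is, is the coherence/vanishing input in the stacky setting: one must be slightly careful that ``DM type'' plus properness really does give a morphism to which Keel--Mori and the cohomology-and-base-change machinery apply, and that the reduction to the coarse space (or directly to a proper morphism of algebraic spaces) is legitimate after the harmless shrinking of $Y$. Everything else — flat base change to reduce to $Y$ affine, exactness of $c_*$ in characteristic zero, Grothendieck's coherence and vanishing for proper morphisms of noetherian schemes, and the formal long-exact-sequence computation of the Euler characteristic — is standard and I would not belabor it.
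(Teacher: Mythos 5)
Your proof is correct and follows essentially the same route as the paper: localize on $Y$, pass to the (relative) coarse moduli space, use exactness of the pushforward to the coarse space in characteristic zero, and then invoke classical coherence and vanishing for the resulting representable proper morphism. The additional details you supply (the long-exact-sequence additivity check and coherence of each $R^i p_* F$) are points the paper treats as formal and omits.
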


\begin{proof}

We argue that the sum \eqref{eqn:pfwddmtypekthy} is finite. Write $\bar X$ for the relative coarse moduli space of the map $p$ and $N$ for a number larger than the dimensions of the fibers of $\bar X \to Y$. This is possible using quasicompactness of $Y$. 

We claim $R^i p_* F$ vanishes for $i > N$ for any coherent sheaf $F$. The claim is étale local in $Y$ and so is the relative coarse moduli space $\bar X$, so we can assume $Y$ is an affine scheme. The map $t : X \to \bar X$ is finite flat, so pushforward is exact $Rt_* = t_*$. This reduces to the representable case $\bar X \to Y$.

\end{proof}

Pushforward from a proper DM stack to a point is denoted $\chi$.

\begin{example}

If the morphism $p : X \to Y$ is not of DM type, the pushforward need not be well defined. Take $B\GG_m \to \pt$. The cohomology of $B\GG_m$ is freely generated as a ring by the first Chern class of the universal line bundle and does not vanish in any degree. 
%\Leo{Do we have an example of one that's proper but not DM type? Maybe $BE$ for an elliptic curve or $BPGL$ if PGL is projective?}

\end{example}

\begin{example}\label{ex:pfwdtrivialgerbe}

Let $G$ be a finite group. Sheaf pushforward along $p : BG \to \pt$ sends a complex $G$-representation $V$ to the invariant subspace $V^G$. The pushforward is then the alternating sum of group cohomology
\[\chi(V) = \sum (-1)^i [H^i(G, V)].\]
Because we work over $\CC$, the structure sheaf of $\pt$ is $\OO_{pt} = \CC$. Likewise $G$-representations $V$ on $BG = \bra{\Spec \CC/G}$ are complex representations, and the order of the group $\#G$ is invertible in $V$. The group cohomology therefore vanishes
\[H^i(G, V) = 0 \qquad i \neq 0.\]
The alternating sum is just the first term
\[\chi(V) = [V^G].\]

\end{example}

The projection formula holds in both $K^\circ$ and $K_\circ$, where defined
\begin{equation}\label{eqn:projectionfmla}
f_*(\alpha \otimes f^* \beta) = f_*\alpha \otimes \beta.    
\end{equation}
This results from the formula on the level of sheaves \cite[08EU]{sta}.

The main $K$-theory classes we are interested in are the fundamental class $[\OO_X] \in K_\circ(X)$ and the \textit{virtual fundamental class} (a.k.a. \textit{virtual structure sheaf}) \cite[\S 2.3]{QK1}, \cite[Definition 2.2]{fengquktheory}, \cite[Definition 1.2]{logquantumkproductchouherrlee}. Consider a map $f : X \to M$ from a DM stack $X$ to a smooth stack $M$ endowed with a perfect obstruction theory $C_{X/M} \subseteq E$. The virtual fundamental class $[\OO_X^{vir}]$ is the image of the structure sheaf of the normal cone $[\OO_{C_{X/M}}]$ under the isomorphism \cite[Remark 1.6]{logquantumkproductchouherrlee}
\[[\OO_X^{vir}] = \sigma^*[\OO_{C_{X/M}}] \qquad  \sigma^* : K_\circ(E) \simeq K_\circ(X).\]

\begin{example}\label{ex:pfwdvfctrivialgerbes}

Let $\pi : Y = BG \times X \to X$ be a trivial gerbe for a finite group $G$. Suppose $X$ has a perfect obstruction theory over some $M$ and $Y$ is given the induced perfect obstruction theory. Then the virtual fundamental class pulls back $\pi^* [\OO^{vir}_X] = [\OO_Y^{vir}]$. 

Example \ref{ex:pfwdtrivialgerbe} describes $\pi_*$ as taking $G$-invariants of a representation. Then $\pi_* \pi^* [\OO_X] = [\OO_X]$. Using the projection formula, this implies the virtual fundamental class also pushes forward
\[\pi_*[\OO_Y^{vir}] = \pi_*(\pi^*[\OO_X^{vir}] \otimes [\OO_Y]) = [\OO_X^{vir}] \otimes \pi_* \pi^*[\OO_X] = [\OO_X^{vir}].\]

\end{example}

\begin{proposition}\label{prop:pfwdgerbestrsheaf}

Let $\pi : \cal G \to X$ be a gerbe banded by a finite group $G$. The base $X$ is a scheme or algebraic stack which we emphasize lies over $\CC$. Then the structure sheaf pushes forward to the structure sheaf, both as sheaves and in $K$-theory:
\[R\pi_* \OO_{\cal G} = \pi_* \OO_{\cal G} = \OO_X, \qquad \pi_*[\OO_{\cal G}] = [\OO_X] \in \HH(X).\]

The same holds for virtual fundamental classes if $\cal G$ is given the induced perfect obstruction theory from $X$
\[\pi_* [\cal G]^{vir} = [X]^{vir} \qquad \in \HH(X). \]

\end{proposition}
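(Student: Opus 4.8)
The plan is to reduce to the case where the gerbe is trivial, which is handled by Examples~\ref{ex:pfwdtrivialgerbe} and~\ref{ex:pfwdvfctrivialgerbes}, using that the statement is local on the base. First I would prove the sheaf-theoretic assertion $R\pi_* \OO_{\cal G} = \OO_X$. Since $\pi$ is a gerbe banded by the finite group $G$, it is in particular fppf-locally (indeed étale-locally, as $G$ is finite and we are over $\CC$) on $X$ isomorphic to the trivial gerbe $BG \times U \to U$. Cohomology and base change / flat descent let me check the claim after such a cover: over $U$, Example~\ref{ex:pfwdtrivialgerbe} computes $R\pi_*$ of any sheaf as taking $G$-invariants (with higher direct images vanishing because $\#G$ is invertible, as we work over $\CC$), and $(\OO_{BG \times U})^G = \OO_U$. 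So $R^i\pi_*\OO_{\cal G}$ vanishes for $i>0$ and $\pi_*\OO_{\cal G} = \OO_X$ after pullback along the cover; by descent this holds on $X$ itself. The same invertibility-of-$\#G$ argument shows $\pi$ is cohomologically proper of DM type, so \eqref{eqn:pfwddmtypekthy} applies and the $K$-theory statement $\pi_*[\OO_{\cal G}] = [\OO_X]$ follows immediately from $R\pi_*\OO_{\cal G} = \OO_X$.

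For the virtual fundamental class, I would first observe that the perfect obstruction theory on $\cal G$ is, by hypothesis, the pullback along $\pi$ of the one on $X$ (with respect to some smooth $M$, which we may replace by $BG \times M$ or keep as is since $\pi$ is étale-locally trivial and $C_{\cal G/M} = \pi^* C_{X/M}$, $E_{\cal G} = \pi^* E_X$). Since $\pi$ is flat, flat pullback on $K_\circ$ commutes with the construction of the virtual class: $\pi^*[\OO_{C_{X/M}}] = [\OO_{C_{\cal G/M}}]$ and $\pi^*$ is compatible with the deformation-to-the-normal-cone isomorphism $\sigma^*$, so $\pi^*[\OO_X^{vir}] = [\OO_{\cal G}^{vir}]$ — exactly as in Example~\ref{ex:pfwdvfctrivialgerbes}, but the argument there works verbatim for a possibly nontrivial gerbe because only flatness of $\pi$ is used. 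Then the projection formula~\eqref{eqn:projectionfmla} gives
\[
\pi_*[\OO_{\cal G}^{vir}] = \pi_*(\pi^*[\OO_X^{vir}] \otimes [\OO_{\cal G}]) = [\OO_X^{vir}] \otimes \pi_*[\OO_{\cal G}] = [\OO_X^{vir}] \otimes [\OO_X] = [X]^{vir},
\]
using the structure-sheaf computation from the first paragraph.

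I expect the main obstacle to be the compatibility of the virtual class construction with flat pullback $\pi^*$ — i.e.\ justifying $\pi^*[\OO_X^{vir}] = [\OO_{\cal G}^{vir}]$ carefully. This requires knowing that forming the intrinsic normal cone, its structure sheaf class, and the Gysin isomorphism $\sigma^*$ all commute with flat base change along $\pi$; for a flat, quasi-finite (indeed gerbe) morphism this is standard but should be cited precisely (e.g.\ from the references on $K$-theoretic virtual classes in the excerpt). A secondary, more bookkeeping point is making the local-triviality/descent argument precise for algebraic-stack bases $X$ rather than schemes: one wants an fppf cover $X' \to X$ trivializing $\cal G$ together with the fact that both sides of each claimed identity satisfy fppf descent, which holds since $K_\circ$ and $R\pi_*$ are compatible with the flat base change $X' \times_X \cal G \to \cal G$.
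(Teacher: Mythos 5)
Your proposal is correct and follows essentially the same route as the paper: localize on $X$ to trivialize the gerbe, reduce to $BG \to \pt$ via cohomology and base change, invoke Example \ref{ex:pfwdtrivialgerbe} for the structure sheaf, and then deduce the virtual-class statement from the pullback identity $\pi^*[\OO_X^{vir}] = [\OO_{\cal G}^{vir}]$ and the projection formula as in Example \ref{ex:pfwdvfctrivialgerbes}. The extra care you flag about compatibility of the virtual class with flat pullback is reasonable but the paper treats it exactly as you do, by appealing to the same example.
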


\begin{proof}

The statement on sheaves implies that on $K$-theoretic classes and is local in $X$. We can then assume $\cal G$ is trivial, fitting in a pullback square
\[
\begin{tikzcd}
\cal G \ar[r] \ar[d] \pb     &BG \ar[d]         \\
X \ar[r]       &\pt.
\end{tikzcd}
\]
Example \ref{ex:pfwdtrivialgerbe} covers the case of $BG \to \pt$, and the general case results from cohomology and base change applied to this square. 

The statement on virtual fundamental classes results from the first as in Example \ref{ex:pfwdvfctrivialgerbes}.

\end{proof}

\begin{remark}

The proof of Proposition \ref{prop:pfwdgerbestrsheaf} does not work for schemes over $\ZZ$. The groups $H^i(G, V)$ for $i \neq 0$ are torsion, and so are the sheaves $R^i\pi_* V$ for any coherent sheaf on $\cal G$. But this does not mean they vanish in $\HH \otimes \QQ$. Tensoring $-\otimes \QQ$ kills $K$-theoretic classes that are torsion in the group law on $K$-theory, not the classes of sheaves that themselves are torsion. 

The trivial gerbe $\bra{\Spec \ZZ/G} \to \Spec \ZZ$ does satisfy Example \ref{ex:pfwdtrivialgerbe} and Proposition \ref{prop:pfwdgerbestrsheaf}, because the classes of torsion groups vanish in the $K$-theory of the integers. But this statement does not localize. 

\end{remark}

We need two related theorems on the behavior of (virtual) fundamental classes under pushforward. These extend Hironaka's theorem and Costello's theorem, respectively. 

\begin{theorem}[{Hironaka's pushforward theorem \cite[Proposition 2.3]{logquantumkproductchouherrlee}}]\label{thm:ourhironakathm}\phantom{a}
Let $p : X \to Y$ be a proper birational map of smooth DM stacks. The pushforward of the fundamental class of $X$ is that of $Y$ in $K$-theory
\[p_*[\OO_X] = [\OO_Y] \qquad \in K_\circ(Y). \]
\end{theorem}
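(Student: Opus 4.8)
The plan is to prove the stronger sheaf-level identity $Rp_*\OO_X = \OO_Y$; the theorem follows immediately, since by \eqref{eqn:pfwddmtypekthy} the pushforward is the alternating sum $p_*[\OO_X] = \sum_i (-1)^i[R^ip_*\OO_X]$, which collapses to $[\OO_Y]$ once $Rp_*\OO_X$ is concentrated in degree zero and equal to $\OO_Y$ there. The morphism $p$ is of DM type, so $p_*$ is defined by the Lemma above, and the identity $Rp_*\OO_X = \OO_Y$ may be checked étale-locally on $Y$.

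For the degree-zero part, $p_*\OO_X = \OO_Y$: since $X$ and $Y$ are smooth, $X$ is reduced and $Y$ is normal, and as $p$ is proper, $p_*\OO_X$ is a coherent $\OO_Y$-algebra, finite as an $\OO_Y$-module; birationality of $p$ places it between $\OO_Y$ and the sheaf of total fractions $\mathcal{K}(Y) = \mathcal{K}(X)$. A finite $\OO_Y$-algebra contained in $\mathcal{K}(Y)$ is integral over $\OO_Y$, so normality of $Y$ forces $p_*\OO_X = \OO_Y$. (This is Zariski's main theorem; it passes to Deligne--Mumford stacks via an étale presentation, using that a smooth stack is normal.)

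The crux is the higher vanishing $R^ip_*\OO_X = 0$ for $i > 0$. I would invoke resolution of singularities in characteristic zero in the following form: there is a smooth DM stack $Z$ with proper birational morphisms $q : Z \to X$ and $r = p\circ q : Z \to Y$, each a finite composition of blowups along smooth closed substacks (a weak-factorization-type statement, available for DM stacks over $\CC$). For a single blowup $\mathrm{Bl}_W V \to V$ of a smooth center $W$ in a smooth DM stack $V$, the exceptional divisor is a projective bundle over $W$, and a computation that is étale-local on $V$ --- reducing to schemes and the cohomology of projective space via cohomology and base change --- gives $R(\mathrm{Bl}_W V \to V)_*\OO_{\mathrm{Bl}_W V} = \OO_V$. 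Composing these and using $R(g\circ h)_* = Rg_* \circ Rh_*$ yields $Rq_*\OO_Z = \OO_X$ and $Rr_*\OO_Z = \OO_Y$, whence
\[ Rp_*\OO_X = Rp_*\, Rq_*\OO_Z = R(p\circ q)_*\OO_Z = Rr_*\OO_Z = \OO_Y. \]
Equivalently, one can phrase this as: a smooth $Y$ has rational singularities, and $p : X \to Y$ from a smooth $X$ is a resolution computing them.

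The step I expect to be the main obstacle is this birational-geometry input: producing the common smooth resolution $Z$ dominating $X$ whose structure maps to both $X$ and $Y$ are towers of blowups along smooth centers. Over $\CC$ the required resolution and weak-factorization results are classical for schemes and have been extended to Deligne--Mumford stacks, but some care is needed because $p$ need not be representable, so the single-blowup computation must genuinely be carried out étale-locally on the base of each blowup rather than on a scheme chart of the source. Granting this input, everything else is formal manipulation of derived pushforwards.
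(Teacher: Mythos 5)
The paper does not prove this statement; it is quoted verbatim from \cite[Proposition 2.3]{logquantumkproductchouherrlee}, so there is no internal proof to compare against. Your overall strategy --- upgrade to the sheaf-level identity $Rp_*\OO_X=\OO_Y$, get $p_*\OO_X=\OO_Y$ from Zariski's main theorem and normality, and kill the higher direct images by reducing to blowups along smooth centers --- is the standard route to this result in characteristic zero, and the reduction to the $K$-theoretic statement via \eqref{eqn:pfwddmtypekthy} is correct.

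There is, however, one step that as literally stated rests on an open problem. You ask for a single smooth $Z$ with $q:Z\to X$ and $r=p\circ q:Z\to Y$ \emph{each} a finite tower of blowups along smooth centers. That is the strong factorization conjecture, which is not known even for smooth projective varieties; the theorem of Abramovich--Karu--Matsuki--W\l odarczyk only provides a zigzag $X=V_0\dashrightarrow V_1\dashrightarrow\cdots\dashrightarrow V_n=Y$ of smooth blowups and blowdowns. Hironaka's elimination of indeterminacy does give a tower of smooth blowups $r:Z\to Y$ factoring through $X$, so $Rr_*\OO_Z=\OO_Y$; but then $q:Z\to X$ is merely some proper birational morphism of smooth stacks, and asserting $Rq_*\OO_Z=\OO_X$ is circular. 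Two standard repairs: (i) use the actual weak factorization statement, which guarantees that each $V_i$ with $i\ge i_0$ admits a morphism to $Y$ and each $V_i$ with $i\le i_0$ a morphism to $X$, and propagate $Rg_{i,*}\OO_{V_i}$ across each elementary step; or (ii) avoid factorization entirely by combining $p_*\omega_X=\omega_Y$ (reflexive sheaves on normal $Y$ agreeing outside codimension two) with Grauert--Riemenschneider vanishing and Grothendieck duality to get $Rp_*\OO_X\cong R\mathcal{H}om(Rp_*\omega_X[\dim],\omega_Y[\dim])\cong\OO_Y$. Finally, for DM stacks the elementary modifications in the stacky factorization theorems (Bergh--Rydh, Harper) include root stacks along smooth divisors as well as smooth blowups; these also satisfy $R\pi_*\OO=\OO$ over $\CC$ (cf.\ Proposition \ref{prop:pfwdgerbestrsheaf}), so the argument survives, but this case must be included. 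With one of these repairs your proof is complete.
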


Our Costello-type pushforward theorem was originally in the more general context of log geometry. We remove log structures in our citation for simplicity. 

\begin{theorem}[{Costello's pushforward theorem \cite[Theorem 2.7]{logquantumkproductchouherrlee}}]\label{thm:ourcostellothm}\phantom{a}
Consider a pullback square of algebraic stacks
\[\begin{tikzcd}
X \ar[r, "p"] \ar[d] \pb       &Y \ar[d]      \\
M \ar[r, "q"]       &N
\end{tikzcd}\]
with $X, Y$ DM stacks and $M, N$ smooth. Suppose $Y \to N$ is equipped with a perfect obstruction theory and $X \to M$ is given the pullback perfect obstruction theory. If the map $q$ is proper birational, the pushforward of the virtual class of $X$ is that of $Y$ 
\[p_*[\OO_X^{vir}] = [\OO_Y^{vir}] \qquad \in K_\circ(Y).\]
\end{theorem}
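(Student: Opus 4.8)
The plan is to reduce the statement to Hironaka's theorem (Theorem~\ref{thm:ourhironakathm}) by introducing a $K$-theoretic \emph{virtual pullback} attached to the obstruction theory of $f\colon Y \to N$. Recall that this obstruction theory produces a vector bundle stack $E_Y \to Y$ inside which the intrinsic normal cone $C_{Y/N}$ sits as a closed substack, with $[\OO_Y^{vir}] = \sigma_Y^*[\OO_{C_{Y/N}}]$, where $\sigma_Y^*\colon K_\circ(E_Y) \longsimeq K_\circ(Y)$ is the inverse of the flat pullback (an isomorphism by homotopy invariance of $K_\circ$ for vector bundle stacks). Writing $g\colon X \to M$ for the left vertical map, cartesianness of the square together with the fact that the obstruction theory of $g$ is pulled back along $p$ gives $E_X = E_Y \times_Y X$. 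The subtlety to keep in mind is that, since $q$ is \emph{not} flat, $C_{X/M}$ need not equal $C_{Y/N}\times_Y X$ inside $E_X$; so one cannot simply push forward the cone, and the discrepancy between the two cones is precisely what a deformation-to-the-normal-cone argument has to control.

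Concretely, I would invoke (or build, following the deformation-to-the-normal-cone construction of Manolache's virtual pull-backs in its $K$-theoretic incarnation) the virtual Gysin map $f^{!}\colon K_\circ(N) \to K_\circ(Y)$ together with two of its basic properties. First, for any morphism $h\colon V \to T$ carrying a perfect obstruction theory with $T$ smooth, one has $h^{!}[\OO_T] = [\OO_V^{vir}]$: this is immediate on unwinding the definition of $h^{!}$, taking the identity $T \to T$ as the test object. Second, for the cartesian square at hand, with $q$ proper and $g$ equipped with the pulled-back obstruction theory, the virtual pullback commutes with proper pushforward from the base:
\[
p_* \circ g^{!} \;=\; f^{!} \circ q_* \qquad \colon \; K_\circ(M) \longrightarrow K_\circ(Y).
\]
Granting these, apply both sides to the class $[\OO_M] \in K_\circ(M)$. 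The left-hand side is $p_*\big(g^{!}[\OO_M]\big) = p_*[\OO_X^{vir}]$ by the first property (with $M$ smooth). The right-hand side is $f^{!}\big(q_*[\OO_M]\big) = f^{!}[\OO_N] = [\OO_Y^{vir}]$, where the first equality is Hironaka's theorem $q_*[\OO_M] = [\OO_N]$ and the second is again the first property (now for $f$, with $N$ smooth). Comparing gives $p_*[\OO_X^{vir}] = [\OO_Y^{vir}]$, as desired.

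The main obstacle is the second property: setting up enough of a $K$-theoretic refined Gysin / virtual-pullback calculus — in particular its compatibility with proper pushforward along a cartesian (Tor-independent) base change, and the verification that the cone $C_{X/M}$ of the pulled-back obstruction theory is the one produced by the relevant deformation space. The non-virtual shadow of this is the familiar base-change compatibility of Gysin maps for regular embeddings, but one needs it for the intrinsic normal cone of the non-flat base change along $q$, which is where the bundle-stack formalism and the homotopy invariance $\pi_Y^*\colon K_\circ(Y) \cong K_\circ(E_Y)$ earn their keep. Everything else — the identification $E_X = E_Y \times_Y X$ for the pulled-back obstruction theory and the use of the projection formula — is routine. (In the original log setting of \cite[Theorem 2.7]{logquantumkproductchouherrlee} one runs the identical argument with the log cotangent complex and log normal cones; this only adds the bookkeeping of checking that the log deformation space and its $K$-theory behave as in the classical case, with no new conceptual content.)
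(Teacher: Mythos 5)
The paper does not actually prove this theorem: it is imported verbatim as \cite[Theorem 2.7]{logquantumkproductchouherrlee}, so there is no in-paper argument to compare yours against. Judged on its own terms, your reduction is the standard and correct one: define a $K$-theoretic virtual pullback $f^!$ via deformation to the normal cone inside the vector bundle stack, check $h^![\OO_T]=[\OO_V^{vir}]$ for $T$ smooth, use compatibility of $f^!$ with proper pushforward along the base change $q$, and then feed in Hironaka's theorem $q_*[\OO_M]=[\OO_N]$. The hypotheses are used exactly where they should be (properness of $q$ for the commutation, birationality only through Hironaka, smoothness of $M,N$ for the identification of $h^![\OO_T]$ with the virtual class), and you correctly note that $g^![\OO_M]$ computes $\sigma^*[\OO_{C_{X/M}}]$ for the \emph{pulled-back} obstruction theory, which is what the theorem asserts about $X$.

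The one caveat is that the entire weight of the argument rests on the second property, $p_*\circ g^! = f^!\circ q_*$, which you state but do not prove. This is a genuine theorem (the $K$-theoretic analogue of Manolache's base-change compatibility, established by Qu in ``Virtual pullbacks in $K$-theory''), not a formal consequence of the projection formula: it requires constructing the $K$-theoretic specialization map via the deformation space $M^\circ_{X/M\times\PP^1}$, showing it commutes with proper pushforward, and invoking homotopy invariance of $K_\circ$ for vector bundle stacks. So your proposal is best read as a correct reduction of the theorem to (i) Hironaka and (ii) the virtual-pullback calculus of Qu, rather than a self-contained proof; since (ii) is available in the literature, the argument closes up, but you should cite it rather than leave it as an announced ``property.'' Two small housekeeping points: you should observe that $p$ is proper and of DM type (by base change from $q$ and the DM hypothesis on $X$), so that $p_*$ on $K_\circ$ is defined in the sense of the paper's Lemma on finiteness of the pushforward; and the identification $E_X=E_Y\times_Y X$ plus the closed embedding $C_{X/M}\hookrightarrow E_X$ is exactly the statement that the pulled-back complex is again a perfect obstruction theory, which deserves a sentence rather than being folded into ``routine.''
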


To use these theorems, it is important that the relevant maps are proper and birational. Birational maps $f : X \to Y$ of stacks must have an open dense subset of each $X$ and $Y$ that are isomorphic. 

\begin{remark}\label{rmk:puredegneqbirational}

For stacks, \textit{pure degree one} \cite[Definition 2.3]{mycostellogeneralization} and birational are not the same. The map $p : B\ZZ/2 \sqcup B \ZZ/2 \to \pt$ is pure degree one but not birational. The pushforward of the fundamental class is not the fundamental class:
\[p_*[\OO_{B\ZZ/2 \sqcup B \ZZ/2}] = 2 \cdot [\OO_\pt] \qquad \in K_\circ(\pt).\]

If a morphism of schemes is of pure degree one, it is birational. More generally, if $X \to Y$ is a morphism of stacks of pure degree one inducing a \textit{representable} morphism $U \to V$ on open dense substacks $U \subseteq X, V \subseteq Y$, it is birational. 

\end{remark}

\subsection{An intermediary stack}\label{ss:Kstar}

\begin{figure}
    \centering
    \begin{tikzcd}
            &\Kt \ar[dr, "\Gamma"] \ar[dl, "/", swap] \ar[dd, "/\tilde \Gamma"]\ar[dr, "v", swap]        \\
    \Mx \ar[dr, "/S_k", swap]\ar[dr, "\psi"]         &   &\Ks \ar[r] \ar[dl, "(d:1)^J \times /S_{\# I}"]\ar[dl, "\phi", swap]    &\Ms[g, R](X)       \\
            &\Kb
    \end{tikzcd}
    \caption{The stacks of stable maps to a fixed target $X$ used in this paper. 
    $\Ms[g, R](X)$ is the ordinary space of stable maps to $X$. 
    The rest are spaces of stacky genus-zero maps $\tilde C \to \Sym d X$. These can be interpreted as ramified finite covers $C'\to C$ of nonstacky curves together with a map $C'\to X$ satisfying a stability condition. The difference between $\Mx, \Kt, \Ks, \Kb$ lies in which points of $C', C$ are ordered. The maps between them are quotients by various groups reordering the marked points, except for $\Ks \to \Kb$, which is a quotient followed by a $d^{\# J}$-sheeted cover. }
    \label{fig:zoostacks}
\end{figure}

The moduli stack $\KpBSd$ parametrizes a triple $C \leftarrow C' \to D$ of curves over any base $S$. We introduce a variant $\KpBSdstar$ to describe how virtual classes push forward in Proposition \ref{prop:pfwdvfcs}.

The functor $p : \KpBSd \to \Mp$ sends such a triple to $D$. The map $p$ is proper of degree $e = k!(g!)^{\#J}(g!)^k$ \cite[\S 3]{mycostellogeneralization}. This map forgets everything about $C' \to C$, including the ordering of the forgotten marked points under $C' \to D$. 

Let $A \subseteq \num \ell$ be the points forgotten under $p$ as in Figure \ref{fig:excostellodiscretedata}. Let $\KpBSdstar$ be the space similar to $\KpBSd$, but where the marked points $A \subseteq \num{\ell}$ are \textit{unordered}. The forgetful map $\KpBSd \to \KpBSdstar$ is a torsor under
\[\Gamma := (S_g)^J \times S_g \wr S_k.\]
The group $S_g \wr S_k := S_k \ltimes (S_g)^k$ is the wreath product. There is a short exact sequence
\[1 \to S_g^k \to S_g \wr S_k \to S_k \to 1\]
and a section $S_k \dashrightarrow S_g \wr S_k$ of the quotient. We may view $S_g \wr S_k$ as a subgroup of $S_{gk}$ by choosing an identification of $\num{gk}$ with $\num{g} \times \num{k}$. 

The $k$ copies of $S_g$ reorder the unramified points in the fibers with simple ramification points, while $S_k$ reorders the fibers themselves and their images $\num k \subseteq C$.

\begin{example}

Isomorphisms of curves in $\KpBSdstar$ need not stabilize the unordered marked points. For example, $\PP^1$ with three unordered points has automorphism group $S_3$ by interchanging the points $0, 1, \infty$. The moduli space of genus zero curves with three unordered points is then $BS_3$.

\end{example}

These choices ordering certain marked points can also be made on the moduli of stable maps to the stack $\Sym d X$.

\begin{definition}

Let $\Kt \subseteq \tilde{\cal K}_{0, n}(\Sym d X)$ be the moduli space of representable stable maps to $\Sym d X$ with discrete invariants $\Xi$. This parameterizes \'etale, $d$-sheeted covers $\tilde C' \to \tilde C$ with minimal stack structure together with stable maps $\tilde C' \to X$. The curves may have nontrivial gerbes at marked points. All the marked points of $C'$ and $C$ are ordered. 

Define $\Ks$ analogously to $\KpBSdstar$ by forgetting the ordering on the marked points of $C'$ corresponding to $A \subseteq \num{\ell}$. See Figure \ref{fig:zoostacks}. 

\end{definition}

Extend \eqref{eqn:costellosquare} to the cartesian diagram
\begin{equation}\label{eqn:bigcostellosquare}
\begin{tikzcd}
\Kt \ar[rr, bend left=15, "q"]  \ar[r, "v"] \ar[d, "\pi'"] \pb        &\Ks \ar[r, "w"] \ar[d] \pb        &\Ms[g, R](X) \ar[d, "\pi"]       \\
\KpBSd \ar[r, "\nu"]  \ar[rr, bend right=15, "p", swap]     &\KpBSdstar \ar[r, "\omega"]         &\Mp[g, R].
\end{tikzcd}
\end{equation}

\begin{lemma}\label{lem:propbirational}
The map $\omega : \KpBSdstar \to \Mp[g, R]$ is proper and birational. 
\end{lemma}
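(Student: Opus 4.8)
The plan is to establish properness and birationality separately, exploiting the combinatorial nature of the discrete data $\Xi$ together with the classical geometry of branched covers of $\mathbb{P}^1$.

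\textbf{Properness.} I would first reduce to checking the valuative criterion, or better, factor $\omega$ through auxiliary spaces whose properness is already available. The key structural input is that $\KpBSdstar$ parametrizes triples $C \leftarrow C' \to D$ where $C' \to C$ is a ramified cover of type $\Xi$ over a genus-zero twisted curve $C$, with the forgotten points $A$ unordered, and $C' \to D$ a partial stabilization, while $\omega$ remembers only $D$. Since $p = \omega \circ \nu$ is proper by Corollary 3.7 of \cite{mycostellogeneralization} and $\nu : \KpBSd \to \KpBSdstar$ is a $\Gamma$-torsor (hence proper and surjective), properness of $\omega$ follows by descent: a morphism is proper if it becomes proper after a proper surjective base change, or simply because $\nu$ proper surjective and $p = \omega\nu$ proper force $\omega$ proper (properness descends along the fppf-surjective, even proper-surjective, map $\nu$). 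I would spell this out: $\omega$ is separated and of finite type since both source and target are, so only universal closedness needs checking, and that passes through $\nu$.

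\textbf{Birationality.} This is the geometric heart. I must produce open dense substacks of $\KpBSdstar$ and $\Mp[g,R]$ that are isomorphic, or equivalently (by Remark \ref{rmk:puredegneqbirational}) exhibit $\omega$ as representable and of pure degree one over a dense open. The relevant dense open of $\Mp[g,R]$ consists of smooth genus-$g$ curves $D$ with $R = \#I + \#J$ distinct marked points in sufficiently general position. Over such a point, I invoke the classical Riemann--Roch/Brill--Noether fact cited before Lemma \ref{lem:propbirational} (\cite[Lemma 6.0.1]{costello}, \cite[Theorem 3.12]{mycostellogeneralization}): a general genus-$g$ curve $C'$ of the appropriate type admits exactly one degree-$d = g+1$ map $f : C' \to \mathbb{P}^1$ with $f^*\infty$ the prescribed divisor $B$ (the fiber $I$ with multiplicities $\gamma$), and the remaining branch points are simple. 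The stabilization $D$ is recovered from $C'$ by forgetting the $A$-points (the $\num{kg}$ unramified points in the simple-ramification fibers and the unmarked copies in $J \times \num{g+1}$). So the fiber of $\omega$ over a general $D$ is: choose the cover structure (unique up to the choices recorded by passing between $\Kt$ and $\KpBSdstar$, which are exactly the $\Gamma$-orderings that have been quotiented away), giving a single point. I would check that this single point carries no extra automorphisms beyond those of $D$ itself — this is where unordering the $A$-points is essential, since ordering them would introduce $\Gamma$ as automorphisms and break pure-degree-one-ness. Then representability of $\omega$ near this locus follows because the only automorphisms of the triple $C \leftarrow C' \to D$ lying over an automorphism of $D$ are trivial for general $D$ (a general genus-$g$ curve has no automorphisms, and the cover is rigid).

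\textbf{Main obstacle.} The hard part is the birationality, specifically verifying that over the general locus $\omega$ is not merely pure degree one but genuinely birational in the stacky sense — i.e., representable on a dense open. I expect the subtlety to lie in the gerbe structures at the marked points: $C$ carries nontrivial $\mu_{r_j}$-gerbes ($r_j = 2$ at the $k$ simple branch points, $r_\infty = \operatorname{lcm}(\gamma)$ at $\infty$), so $\KpBSdstar$ has inherent stackiness that does \emph{not} come from curve automorphisms, and I must confirm this stackiness is "the same" as none — that is, that the twisted curve $C$ and cover $C'$ are rigid relative to $D$ even accounting for gerbe automorphisms, or else that these gerbe automorphisms match automorphisms already present downstream. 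I would handle this by noting that the twisted structure on $C$ is the \emph{minimal} one forced by the cover (balanced at nodes, cyclotomic of the prescribed orders at marks), hence functorially determined, so it contributes no moduli and — crucially — the band being $\mu_{r_j}$ with the gerbe possibly nontrivial in families does not add automorphisms to a \emph{single} geometric fiber. After dispensing with this, the count "exactly one cover" from \cite[Lemma 6.0.1]{costello} closes the argument, and the dimension equality $\dim \KpBSdstar = \dim \Mp[g,R]$ fixed in \S\ref{ss:specxi} via $k = \#I + 3g - 1$ confirms we are comparing spaces of the same dimension, consistent with birationality.
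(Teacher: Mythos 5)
Your overall strategy matches the paper's: deduce properness from the properness of $p$ (your factorization through the $\Gamma$-torsor $\nu$ is a clean way to make this explicit, and is if anything more careful than the paper, which essentially just cites properness of $p$), and obtain birationality by combining the ``exactly one cover over a general $D$'' count from \cite[Theorem 3.12]{mycostellogeneralization} with generic representability, exactly as in Remark \ref{rmk:puredegneqbirational}. Your discussion of why the gerbe structure on $\tilde C$ contributes no extra automorphisms on a single smooth geometric fiber is also on target.

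There is, however, one step that fails as written: you justify generic representability by asserting that ``a general genus-$g$ curve has no automorphisms,'' so that both $\Aut$ of the triple and $\Aut(D)$ are trivial. This is false for $g=1$, $R=1$: a general one-pointed elliptic curve $D$ has $\Aut(D) = \ZZ/2$ (the elliptic involution fixes the marked point), and the corresponding double cover $C' \to \PP^1$ carries that same involution, so neither automorphism group is trivial. Representability does not require triviality of either group; it requires injectivity of $\Aut(C \leftarrow C' \to D) \to \Aut(D)$. The paper's proof establishes exactly this: since $C' \to C$ is an epimorphism, an automorphism of the pair is determined by its restriction to $C'$, giving $\Aut(C'\to C) \hookrightarrow \Aut(C')$, and since $C' = D$ for general $D$ (the partial stabilization is trivial on underlying curves), the composite $\Aut(C'\to C) \to \Aut(D)$ is injective. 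That argument is uniform in $g$ and is the fix you need; the paper's elliptic-curve section later verifies the $g=1$ case explicitly, where both groups are $\ZZ/2$ and the map between them is an isomorphism. Relatedly, your phrasing ``automorphisms lying over an automorphism of $D$ are trivial'' is not the right criterion — you want triviality of automorphisms lying over the \emph{identity} of $D$.
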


\begin{proof}

Fix a generic smooth $D$ and prescribed ramification divisor $B = \sum_{i \in I} d(i) [i]$ over $\infty$ specified by $\Xi$. The proof of \cite[Theorem 3.12]{mycostellogeneralization} shows there is exactly one cover $C' \to C$ with ramification in $B$ and $C' \to D$ a partial stabilization. The map is thus proper and of pure degree one, but this is not yet sufficient by Remark \ref{rmk:puredegneqbirational}.

We argue that $\omega$ is generically representable, hence birational. The proof of \cite[Theorem 3.12]{mycostellogeneralization} shows that if $D \in \Mp[g, R](X)$ is general, the preimage under $\omega$ is exactly one cover $C'\to C$ with $C'= D$. Consider automorphisms
\[
\begin{tikzcd}
C' \ar[d] \ar[r, no head, "\sim"]      &C' \ar[d]     \\
C \ar[r, no head, "\sim"]       &C
\end{tikzcd}    
\]
of the map $C'\to C$. These form a subgroup of automorphisms of $C'$ because $C'\to C$ is an epimorphism. Since the map $\Aut(C'\to C) \to \Aut(D)$ is injective, the map is generically representable and hence birational.

\end{proof}

Hironaka's pushforward theorem \ref{thm:ourhironakathm} equates their fundamental classes: 
\[\omega_* \stquot{\OO_{\KpBSdstar}} = \stquot{\OO_{\Mp}} \quad \in \HH(\Mp).\]
Costello's pushforward theorem \ref{thm:ourcostellothm} likewise equates the virtual fundamental classes:

\begin{proposition}\label{prop:pfwdvfcs}

The fundamental class pushes forward along the map $w$ in Diagram \eqref{eqn:bigcostellosquare}:
\[\omega_* \stquot{\OO_{\KpBSdstar}} = \stquot{\OO_{\Mp}} \quad \in \HH(\Mp).\]

As a result, the virtual fundamental class pushes forward the same way:
\[w_* [\OO^{vir}_{\KsSymXstar}] = [\OO^{vir}_{\Ms[g, R](X)}] \quad \in \HH(\Ms[g, R](X)).\]

\end{proposition}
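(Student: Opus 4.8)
The plan is to deduce the statement on virtual classes from the statement on fundamental classes by applying our Costello-type pushforward theorem (Theorem~\ref{thm:ourcostellothm}) to the right-hand square of Diagram~\eqref{eqn:bigcostellosquare}. First I would establish the first displayed equality. This is Lemma~\ref{lem:propbirational} combined with Hironaka's pushforward theorem (Theorem~\ref{thm:ourhironakathm}): the map $\omega : \KpBSdstar \to \Mp[g, R]$ is proper and birational between smooth DM stacks, so its pushforward sends $[\OO_{\KpBSdstar}]$ to $[\OO_{\Mp}]$. (Strictly this is the equality for $\omega$, not $w$; the statement as written conflates the upper and lower rows, but the content is the $\omega$ equality, which is what feeds into the Costello square.)

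Next I would verify the hypotheses of Theorem~\ref{thm:ourcostellothm} for the right-hand square
\[
\begin{tikzcd}
\Ks \ar[r, "w"] \ar[d] \pb        &\Ms[g, R](X) \ar[d, "\pi"]       \\
\KpBSdstar \ar[r, "\omega"]         &\Mp[g, R].
\end{tikzcd}
\]
The square is cartesian: this is part of the construction of Diagram~\eqref{eqn:bigcostellosquare}, inherited from the cartesianness of \eqref{eqn:costellosquare} (cited from \cite[\S 3]{mycostellogeneralization}) together with the fact that forgetting the ordering on $A$ on the $\Sym d X$-side and on the $BS_d$-side are compatible. The stacks $\Ks$ and $\Ms[g, R](X)$ are DM; the bases $\KpBSdstar$ and $\Mp[g, R]$ are smooth (the former because it is an open substack of a space of maps from genus-zero twisted curves to $BS_d$, which is smooth, and $\Mp[g,R]$ is the standard smooth Artin stack of prestable curves). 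The obstruction theory: $\Ms[g, R](X) \to \Mp[g, R]$ carries the standard perfect obstruction theory, and $\Ks \to \KpBSdstar$ is given the pulled-back one — this is exactly the setup described in \S\ref{ss:costellosquare} (``the obstruction theory for $\pi'$ is pulled back from $\pi$''), which is what lets one talk about $[\OO^{vir}_{\Ks}]$. Finally $\omega$ is proper and birational by Lemma~\ref{lem:propbirational}. Theorem~\ref{thm:ourcostellothm} then yields $w_* [\OO^{vir}_{\KsSymXstar}] = [\OO^{vir}_{\Ms[g, R](X)}]$ directly.

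The main obstacle I anticipate is not any single estimate but the bookkeeping needed to confirm that the right-hand square really is cartesian with $\Ks$ carrying the pullback obstruction theory: one must check that $\Ks = \KpBSdstar \times_{\Mp[g,R]} \Ms[g,R](X)$, i.e. that a stable map $\tilde C' \to X$ together with a triple $C \leftarrow C' \to D$ (with $A$ unordered) is the same data as such a triple together with a compatible stabilization in $\Ms[g,R](X)$ — and that the two obstruction theories agree after this identification. This is essentially the content of the cited results \cite[\S 3.2]{mycostellogeneralization} applied to the ``$A$-unordered'' variant rather than the original square, so the work is in citing the right statements and observing that forgetting the ordering on $A$ is harmless for both the fiber-product identity and the obstruction theory (the obstruction theory only sees the map to $X$ and the underlying curves, not the orderings). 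Once cartesianness is in hand, the rest is a one-line application of Theorem~\ref{thm:ourcostellothm}.
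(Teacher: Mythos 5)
Your proposal is correct and matches the paper's own (very terse) justification exactly: the first equality is Lemma~\ref{lem:propbirational} plus Theorem~\ref{thm:ourhironakathm}, and the second is Theorem~\ref{thm:ourcostellothm} applied to the right-hand square of \eqref{eqn:bigcostellosquare}, whose cartesianness and pulled-back obstruction theory come from \cite[\S 3]{mycostellogeneralization} as set up in \S\ref{ss:costellosquare}. Your observation that the statement's wording conflates $w$ with $\omega$ in the first display is also accurate.
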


The wreath product $S_g \wr S_k$ arises naturally as the automorphism group of the projection $\num g \times \num k \to \num k$ of marked points of $C' \to C$:

\begin{remark}[Thanks to J. Rufus Lawrence]\label{rmk:iteratedsym}

The iterated stack-theoretic symmetric product $\Sym k {\Sym g X}$ is isomorphic to the global quotient
\[\Sym k {\Sym g X} \simeq X^{gk}/(S_g \wr S_k).\]
To make sense of $\Sym k -$ applied to a stack $\scr S$, use its functor of points
\[\Sym k {\scr S} (T) = \left\{
\begin{tikzcd}
T' \ar[d, "k:1", swap] \ar[r]      &\scr S         \\
T
\end{tikzcd}
\right\},\]
where $T' \to T$ is a $k$-sheeted cover that is part of the moduli. 

Consider the evaluation map corresponding to a point $C' \to C$ of $\KsSymXstar$ over a base $T$. Upon ordering the $k$-marked points $i$ of $C$, we get a map to $\Sym g X$ corresponding to each $i$. Ordering the $k$-marked points of $C$ entails a $k$-sheeted cover of $T$ with a map to $\Sym g X$. These evaluation maps are precisely
\[\KsSymXstar \longrightarrow \Sym k {\Sym g X} = X^{gk}/(S_g \wr S_k).\]

\end{remark}

\subsubsection{Two more stacks $\Mx, \Kb$}

The stack $\Kt$ is the simplest because all the marked points of $C'$ and $C$ are ordered, but we will not actually use it for our theorem. The variant $\Ks$ above is virtually birational to $\Ms[g, R](X)$. We need two more variants, completing Figure \ref{fig:zoostacks}. 

\begin{definition}

Let $\Mx$ be the moduli space of representable twisted stable maps $C \to \Sym d X$. It is the same as $\Kt$, except the marked points of $C'$ are not ordered. The only difference from twisted stable maps $C \to \Sym d X$ in the literature is the nontrivial gerbes. 

The quotient maps $\Kt \to \Mx, \Kt \to \Ks$ forget different marked points, so there is not a map between them. Define $\Kb$ to forget all the marked points of both, so only the points of $C$ that are not in $\num{k} \subseteq C$ are ordered. 

\end{definition}

As in Figure \ref{fig:zoostacks}, there are quotient maps from $\Kt$ to all the others $\Mx, \Kb, \Ks$ by various groups reordering marked points. The map $\Mx \to \Kb$ quotients by $S_k$, while $\Ks \to \Kb$ is a composite of many $d$-sheeted covers indexed by $J$ and a quotient by $S_{\#I}$.

\begin{remark}\label{rmk:phiisomonepoint}

Remark that $R = 1$ when $\# I = 1$ and $J = \varnothing$. In that case, $k = 3g$. 
The map $\KsSymXstar \to \bar{\cal K}$ is an isomorphism precisely when one of the conditions holds:
\begin{itemize}
    \item $R = 1$
    \item $\# I = 1$ and $g = 0$.
\end{itemize}
See Figure \ref{fig:zoostacks}.

\end{remark}

\begin{remark}\label{rmk:vfcspullback}
The finite étale maps 
\[
\begin{tikzcd}
\Kt \ar[r, "v"] \ar[d]         &\Ks \ar[d, "\phi"]        \\
\Mx \ar[r, "\psi"]         &\Kb
\end{tikzcd}
\]
in Figure \ref{fig:zoostacks} all equate virtual fundamental classes under pullback:
\[v^*[\OO_{\Ks}^{vir}] = [\OO_{\Kt}^{vir}], \qquad \text{etc.}\]
\end{remark}

\subsection{Gromov--Witten invariants in the $\HH$-theory of stacks}\label{ss:gwkthyforstacks}

Quantum $\HH$-theoretic invariants have been defined variously in the literature \cite{kthytoricstacks1} \cite[\S 2.4]{kthytoricstacks}. Our definition parallels \cite{abram-graber-vistoli}, adding in $\psi$ classes. Our invariants differ by a scaling factor due to conventions over whether gerbes at marked points are trivialized; see \cite{kthytoricstacks1}, \cite[Remark 2.8]{kthytoricstacks} or the original \cite[\S 4.4, 4.5]{AGVproceedingsfirstversion} for comparison. We allow nontrivial gerbes.

For any moduli space $\cal K$ of stable maps $C \to Y$ from $n$-pointed curves, there is an evaluation map
\[ev : \cal K \to Y^n.\]
If $Y$ is a stack and we take representable twisted stable maps $\tilde C \to Y$ in say $\cal K = \Ms(Y)$, the ``points'' of $\tilde C$ are not quite points, but $\mu_r$-banded gerbes. The evaluation map doesn't produce points of $Y$, but cyclic gerbes mapping representably to $Y$. Cyclic gerbes representably embedded in $Y$ form the \textit{rigidified cyclotomic inertia} stack $\instackbar{Y}$ \cite[\S 3]{abram-graber-vistoli}, so the evaluation map is
\[ev : \cal K \to (\instackbar{Y})^n.\]

The stack $\instackbar{Y}$ is closely related to the inertia stack $IY$. The universal gerbe over $\instackbar{Y}$ can be identified with representable maps from the trivial gerbe $\Hom^{rep}(B\mu_r, Y)$. Fixing an isomorphism $\mu_r \simeq \ZZ/r$ over $\CC$, we get a map to the inertia stack
\[
\begin{tikzcd}
\Hom^{rep}(B\mu_r, Y) \ar[r] \ar[d]       &IY         \\
\instackbar{Y}
\end{tikzcd}
\]
by composing $B\ZZ \to B\mu_r \to Y$. See Section \ref{ss:mapc} for a worked example.

Instead of pulling back $\HH$-theoretic classes from $Y$, we pull back from $\HH(\instackbar{Y})$. In our case, $Y = \Sym d X$ and our evaluation map is
\[ev_{C, \infty} : \KsSymX \to \instackbar{\Sym d X}.\]

%Let $\scr S$ be a Deligne--Mumford stack separated and of finite type over $\CC$. There is a \textit{rigidified cyclotomic inertia} stack $\instackbar{\scr S}$ parameterizing representable maps $\scr G \to \scr S$ from nontrivial gerbes $\scr G$ banded by finite cyclic groups $\mu_r$ \cite[\S 3]{abram-graber-vistoli}. The map $\scr G \to *$ gives a restriction map 
%\[\scr S \simeq \HHom(*, \scr S) \to \instackbar{\scr S}.\]
%One may view $\instackbar{\scr S}$ as a gerbey quotient of representable maps $\bigsqcup_r \Hom^{rep}(B\mu_r, \scr S)$ from trivial gerbes, but we do not need this. 

%Evaluation maps are obtained by restricting a stable map $C \to X$ to its marked points. A (representable) twisted stable map to a stack $\tilde C \to Y$ may have nontrivial gerbes $\scr G \subseteq \tilde C$ as marked points; restricting to them yields evaluation maps that land in $\instackbar{Y}$:
%\[ev : \cal K_{g, n}(Y) \to \instackbar{Y}.\]
%In our case, $Y = \Sym d X$ and our evaluation map is
%\[ev_{C, \infty} : \KsSymX \to \instackbar{\Sym d X}.\]

Now we add in $\psi$ classes. Write $\cal U \to \Mp$ for the universal curve and $L_i$ for the conormal bundle at the $i$th marked section $\sigma_i$:
\[L_i := N_{\sigma_i} \simeq \sigma_i^* T_{\cal U/\Mp}.\]
Use the same notation for their pullback to any moduli space with prestable curves, for example $\cal K_{g, n}(Y), \Ms$, etc. The classes of $L_i \in K^\circ(\Mp)$ and their pullbacks to various moduli stacks are referred to as $\psi$ classes.

Beware that maps between moduli spaces involving stabilization do not have the same $\psi$ classes. We compare the $\psi$ class of a map $C \to X$ with the stabilization $C^{st}$ in \S \ref{s:YCpsi}.

The stack $\cal K_{g, n}(Y)$ supports an obstruction theory relative to $\Mp$ that lets us define virtual fundamental classes $\OO^{vir}_{\cal K_{g, n}(Y)}$ in $\HH(\cal K_{g, n}(Y))$. Classes $\alpha_1, \dots, \alpha_n \in \HH(\instackbar{Y})$ and exponents $e_1, \dots, e_n$ give rise to a (descendent) \emph{Gromov--Witten invariant}
\[\num{\alpha_1 L_1^{e_1}, \dots, \alpha_n L_n^{e_n}} = \chi(\OO^{vir}_{\cal K_{g, n}(Y)} \otimes \prod ev_i^* \alpha_i \otimes L_i^{e_i}) \quad \in \QQ.\]
We are equally interested in power series of these invariants.

When $Y = \Sym d X$, there are two twisted curves $\tilde C' \to \tilde C$ and hence two evaluation maps and two sets of $\psi$ classes. The $\psi$ classes of $\tilde C'$ are the same as the marked points of $\tilde C$ below. The main technical problem in Theorem \ref{thm:genusgvgenuszero} will be converting between classes pulled back along the evaluation map of $\tilde C'$ and that of $\tilde C$. Genuine Gromov--Witten invariants have classes $ev^*\alpha$ pulled back from the evaluation map of $\tilde C$, not that of $\tilde C'$. This convention parrots \cite{abram-graber-vistoli}.

\subsection{Permutation-Equivariant $K$-theory}\label{ss:permeqvtkthy}

The $K$-theory of $\Kb$ is equivalent to permutation-equivariant $K$-theory of $\Mx$, as in \cite{giventalpermeqvtqkthy1}. Ordinary quantum $K$-theory entails ``correlators'' defined as the integrals
\[\num{\alpha_1 L_1^{m_1}, \dots, \alpha_n L_n^{m_n}} := \chi(\OO^{vir}_{\Kb} \otimes \prod ev_i^*(\alpha_i) L_i^{m_i}) \qquad \in K_\circ(\pt) = \QQ.\]

%\Leo{Maybe don't need full Givental story here, just equivariant $K$-theory. Maybe cut out a paragraph below, replace with more discussion of equivariant stuff?}
\begin{comment}
Enrich these correlators as $\Gamma$-representations by putting identical entries in the marked points corresponding to orbits of $\Gamma$ \cite{giventalpermeqvtqkthy1}. If we pick one $\alpha L^M$ for the $gk$ simply ramified fibers of $C' \to C$ and another $\beta_1 L^{N_1}, \dots, \beta_{\# J} L^{N_{\#J}}$ for the forgotten points in the fibers of $C' \to C$ over $J$, the correlator of 
\begin{itemize}
    \item[$J$:] $g$ copies of $\beta_1 L^{N_1}$, $g$ copies of $\beta_2 L^{N_2}$, etc. 
    \item[$k$:] $gk$ copies of $\alpha L^M$
    \item[$R$:] arbitrary insertions over the points not permuted by $\Gamma$
\end{itemize}
is a virtual $\Gamma$-module. The action of $\Gamma$ permutes the ordering of the factors corresponding to the points permuted by $\Gamma$. Each cohomology group in the alternating sum $\chi$ can be computed before or after pullback by elements of $\Gamma$, yielding the action. 
\end{comment}

How can we compute Euler characteristics of $\Kb$ by working on $\Mx$? Take the pullback square
\[
\begin{tikzcd}
\Mx \ar[r] \ar[d, "\psi", swap] \pb         &\pt \ar[d]       \\
\Kb \ar[r]         &BS_k \ar[r, "-^{S_k}"]        &\pt
\end{tikzcd}
\]
Denote pushforward along $\Kb \to BS_k$ by $\chi_{S_k}(-)$. This remembers the $S_k$-representation on the virtual vector space $\chi(\psi^*(-))$. To get the ordinary Euler characteristic $\chi(-)$ on $\Kb$, we have to take the virtual $S_k$-invariants of the virtual representation $\chi_{S_k}(-)$:
\[\chi(-) = \left(\chi_{S_k}(-)\right)^{S_k}.\]
These should be derived invariants, but we work over $\QQ$. The higher group cohomologies of $S_k$ valued in a representation all vanish, so the distinction is moot.

The class $\psi^*(-)$ on $\Mx$ will be a Gromov--Witten invariant where the insertions at the marked points forgotten under $\psi$ are identical. In our case, we only care about insertions away from those forgotten marked points. We will only integrate classes at ordered marked points of both $\Kb$ and $\Mx$. 

Even if we insert away from the permuted points $A \subseteq \num{\ell}$, the action of $\Gamma$ still nontrivially permutes the sections:

\begin{example}
Consider $S_3$ acting on $\overline{M}_{0,4} = \PP^1$ by permuting the last three marked points. The generating function of the $S_3$-invariant quantum $K$-invariants on $\overline{M}_{0,4}$ with only $L_1$ can be calculated
\[
 \chi \left( \left[ \overline{M}_{0,4} / S_3 \right], \frac{1}{1-q_1 L_1} \right) = \frac{1}{(1-q_1^2)(1-q_1^3)}.
\]
Indeed, $L_1 = \mathscr{O}(1)$, $\overline{M}_{0,4} = \PP^1$, and $H^{\geq 1} ( \overline{M}_{0,4}, L_1^d)=0$ for all $d \geq 0$. The case $d=1$ has the sections the linear functions on $\PP^1$, which are never $S_3$-invariant (up to M\"obius transformations). It is easy to see that $d=2$ and $d=3$ have invariant sections. In fact, $[ \overline{M}_{0,4} / S_3 ] = \PP(2,3)$ and the formula follows \cite{eulercharM11M04computation}.
\end{example}

\subsection{Grothendieck-Riemann-Roch}

This expository section explains why Grothendieck-Riemann-Roch (GRR) does not reduce equivariant Euler characteristics to ordinary ones on stacks, the way it would for schemes.

Using Grothendieck-Riemann-Roch for schemes, one would expect an equality
\[[\OO^{vir}_{\KsSymX}] \overset{?}{=} [\OO^{vir}_{\KsSymXstar}]^{\oplus \# \Gamma} \qquad \in \HH(\KsSymXstar)_\QQ.\]
Coupled with the projection formula, this would reduce permutation equivariant integrals to ordinary ones.

For schemes, this holds -- let $\pi : P \to X$ be a $G$-torsor with $P, X$ schemes for some finite group $G$. GRR gives a commutative square
\[\begin{tikzcd}
\HH(P) \ar[r] \ar[d, "\pi_*", swap]      &A_*(P)_\QQ \ar[d, "\pi_*"]         \\
\HH(X) \ar[r]      &A_*(X)_\QQ.
\end{tikzcd}\]
The Todd classes cancel out since $P \to X$ is \'etale and $T_X|_P = T_P$, so one can take the horizontal arrows as the Chern character isomorphisms. Since the pushforward in Chow groups gives $\pi_*[P] = \# G \cdot [X]$ and the horizontal isomorphisms send 1 to 1, we have
\begin{equation}\label{eqn:torsorpfwdfmlafails}
\pi_*[\OO_P] = [\OO_X^{\oplus \# G}]. 
\end{equation}
These formulas do not hold for stacks! 

Take $P = \pt$, $X = BG$. The analogous GRR square
\begin{equation}\label{eqn:GRRptBG}
\begin{tikzcd}
\HH(\pt) \ar[r] \ar[d]      &A_*(\pt)_\CC \ar[d]         \\
\HH(BG) \ar[r]         &A_*(BG)_\CC
\end{tikzcd}    
\end{equation}
still commutes \cite[\S 5]{edidinrrdmstacks}, and the Todd class terms vanish. But the lower horizontal arrow is not multiplicative and does not send 1 to 1! One cannot identify $\pi_* \OO_\pt$ and $\OO_{BG}^{\oplus \# G}$.

\begin{example}

Let $G = \ZZ/2$ and consider the quotient map $\pt \to B G$. The inertia stack is 
\[I B G = B G \sqcup \pt,\]
so its rational Chow groups are $A_*(IBG) = \CC^{\oplus 2}$. The GRR square for the quotient $\pi : \pt \to BG$ is then
\[\begin{tikzcd}
K_\circ(\pt) \ar[r, "ch(-)"] \ar[d, "\pi_*"]        &A_*(\pt) \ar[d, "\pi_*"]       \\
K_\circ (BG) \ar[r, "ch(-)"]        &A_*(IBG) \simeq \CC^{\oplus 2}.
\end{tikzcd}
\]
The Todd classes are trivial here. 

The Chern character of the trivial representation $\OO_{BG}$ is $(1, 1)$. By GRR \cite[Theorem 5.4]{edidinrrdmstacks}, the Chern character of $\pi_* \OO_\pt$ is 
\[ch(\pi_* \OO_\pt) = (2, 0) \qquad \in A_*(IBG) \simeq \CC^{\oplus 2}.\]
We can see that $\pi_* \OO_\pt \neq \OO_{BG}^2$. 

\end{example}

\begin{remark}

One can define a Borel equivariant $\HH$ theory for stacks in which formula \eqref{eqn:torsorpfwdfmlafails} holds using \cite{homotopyofstacksnoohi}. One can equip them with virtual fundamental classes and study Borel equivariant quantum $K$-theory. 

\end{remark}

The problem with \eqref{eqn:GRRptBG} is that $\pt \to BG$ introduces stack structure. It is representable, but points in $BG$ have more automorphisms than $\pt$ does. We want the opposite of representable, that the automorphism groups of points surject. We can prove a version of \eqref{eqn:torsorpfwdfmlafails} in this setting \cite{ourexpositoryquantumk}.

\section{Stabilization and $\psi$ classes}\label{s:YCpsi}

\subsection{Costello's lemma for stabilizing $\psi$ classes}
%\Leo{How much of this can we comment out?}
We recall the following three categories defined in \cite[\S 3]{costello}: $\Gamma^u$, which contains the label of nodal curves; $\Gamma^t$, which contains the label of twisted nodal curves; and $\Gamma^c$, which contains labels of twisted marked curves $\mathcal{C}$ and $\mathcal{C}'$ with an \'etale morphism $\mathcal{C}' \rightarrow \mathcal{C}$. They are related by the diagram:
\[
\begin{tikzcd}
\Gamma^c \arrow[r, shift left, "{s,t}"] \arrow[r, shift right] & \Gamma^t \arrow[r, "r"] & \Gamma^u,
\end{tikzcd}
\]
where $r,t$ stand for source and target of the \'etale map $\mathcal{C}' \rightarrow \mathcal{C}$ and $r$ maps $\mathcal{C}$ to its coarse moduli space. These categories will depend on a semigroup $A$. Furthermore, one can relate the functors between graphs into morphisms of stacks of moduli of curves by applying the functor $\mathfrak{M}$.

Given $\eta \in \Gamma^c$ consider the diagram
\[
\mathfrak{M}_{\eta} \xrightarrow{s} \mathfrak{M}_{s(\eta)} \xrightarrow{r} \mathfrak{M}_{r(s(\eta))} \xrightarrow{\pi} \mathfrak{M}_{\nu(I)},
\]
where $I\subsetneq T(s(\eta))$ is a chosen finite subset such that after removing tails in $I$, $s(\eta)$ remains stable. Let $\nu(I)$ be obtained from $r(s(\eta))$ by removing the tails in $I$ and denote the resulting contraction map by $\pi$.

To compare the pullback of $\psi$ classes via $s,r$ and $\pi$, we define the notation $S(e,t,I)$ as follows: Consider the diagram
\[
\mathfrak{M}_{\gamma} \rightarrow \mathfrak{M}_{r(s(\eta))} \xrightarrow{\pi} \mathfrak{M}_{\nu(I)},
\]
where $\gamma \rightarrow r(s(\eta))$ is a contraction of $\Gamma^u$. Now let $t \in T(\gamma) \setminus I$ and $e\in E(\gamma)$. We define
\[
S(e,t,I):= \begin{cases}
1, \mbox{ if $t$ is in a vertex of $\gamma_e$ that is contracted after forgetting the tails $I$; }
\\
0, \mbox{ otherwise}.
\end{cases}
\]
Here $\gamma_e$ is obtained by contracting all edges of $\gamma$ except $e$.

Costello described the pullback of $\psi$ classes in Chow:
\begin{lemma}[{\cite[\S 4.1]{costello}}]
For each $t \in T(\nu(I))$, we have
\[
s^* r^* \pi^* (\psi_t) = m(t) \psi_t - \sum_{f: \gamma \rightarrow \eta}    S(f,t,I)[\mathfrak{M}_f],
\]
where the sum is over $f: \gamma \rightarrow \eta$ in $\Gamma^c$ with $\# E(t(\gamma)) = \# E(f)=1$, $\mathfrak{M}_f \hookrightarrow \mathfrak{M}_{\eta}$ is the closed substack supported on the image of $f$, and $S(f,t,I) := \sum_{e\in E(s(\gamma))} m(e) S(e,t,I)$ is the corresponding multiplicity.
\end{lemma}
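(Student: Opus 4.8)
The plan is to reduce the identity to the classical $\psi$-comparison for a single stabilization map of \emph{coarse} prestable curves, and then to correct for the cyclotomic structure by tracking the ramification of the coarsification map $r$.

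First I would factor $\pi\colon \mathfrak{M}_{r(s(\eta))} \to \mathfrak{M}_{\nu(I)}$ as a tower of maps, each forgetting one tail of $I$ and stabilizing, and note that $s^{*}r^{*}$ is compatible with this factorization. Since both sides of the claimed formula are additive in the boundary corrections, it suffices to prove the case $\#I = 1$, say $I = \{i\}$, the general case following by induction: the set of vertices of a curve that become contracted only grows as further tails are forgotten, so the indicators $S(e,t,I)$ behave compatibly along the tower (this compatibility is precisely what the graph formalism $\Gamma^{c}\to\Gamma^{t}\to\Gamma^{u}$ is set up to record). We are thus reduced to comparing $\psi_{t}$ on the source twisted curve $\mathcal{C}'$ with the pullback of $\psi_{t}$ from its partial stabilization.

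Next, on the level of coarse prestable curves I would use the description $\psi_{t} = \sigma_{t}^{*}\omega$, where $\sigma_{t}$ is the $t$-th section of the universal curve and $\omega$ its relative cotangent (dualizing) sheaf, together with the comparison of universal curves under the stabilization contraction $c\colon \mathcal{C}'_{r(s(\eta))} \to \pi^{*}\mathcal{C}'_{\nu(I)}$. Away from the locus where $\sigma_{t}$ meets a component contracted by $c$ the two cotangent lines agree; along the locus where $\sigma_{t}$ lies on such a component they differ by the class of that component. Restricting to $\sigma_{t}$ and pushing down to the moduli stack gives the untwisted comparison $\pi^{*}\psi_{t} = \psi_{t} - \sum [D]$, the sum over boundary divisors $D$ along which $t$ lies on a contracted vertex; this is the $m(t)=1$, $m(e)=1$ case of the lemma. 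Then I would pull back along $r\colon \mathfrak{M}_{s(\eta)} \to \mathfrak{M}_{r(s(\eta))}$, the coarsification of the source twisted curve: near a $\mu_{m}$-marked point or a balanced $\mu_{m}$-node the map $\mathcal{C}' \to \mathcal{C}'_{\mathrm{coarse}}$ looks like $z \mapsto z^{m}$, so $\omega_{\mathrm{coarse}}$ pulls back to $\omega_{\mathcal{C}'}$ twisted by a divisor supported on the marked gerbes and nodes with multiplicity equal to their orders. At the point $t$ this produces the factor $m(t)$; at a node $e$ on a contracted bridge or tail it multiplies the corresponding boundary class by $m(e)$, which is exactly the definition $S(f,t,I) = \sum_{e\in E(s(\gamma))} m(e)\,S(e,t,I)$. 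Finally $s\colon \mathfrak{M}_{\eta} \to \mathfrak{M}_{s(\eta)}$ merely adds the datum of the étale cover $\mathcal{C}' \to \mathcal{C}$ without altering $\mathcal{C}'$ or its marked points, so $s^{*}$ fixes $\psi_{t}$ and carries each boundary divisor of $\mathfrak{M}_{s(\eta)}$ to the boundary divisors $\mathfrak{M}_{f}$ of $\mathfrak{M}_{\eta}$ indexed by one-edge contractions $f\colon\gamma\to\eta$ in $\Gamma^{c}$ (balancedness makes an étale cover of a node a disjoint union of nodes of the same order). Composing the three pullbacks yields the stated formula.

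The main obstacle is the multiplicity bookkeeping in the twisted step: getting the ramification indices of the coarsification right (that $\psi^{\mathrm{coarse}}$ pulls back to $m\,\psi^{\mathrm{twisted}}$, and that a node of order $m$ contributes the factor $m$, with no off-by-one or inverse), and matching these to Costello's edge-multiplicity conventions when a single contracted chain carries several nodes of different orders and $\sigma_{t}$ lands somewhere along it. A secondary subtlety is making the single-tail reduction honest — that forgetting the tails of $I$ in any order yields the same $\nu(I)$ and that the successive corrections telescope without double counting — which is where the categories $\Gamma^{c},\Gamma^{t},\Gamma^{u}$ and the functor $\mathfrak{M}$ do the real work.
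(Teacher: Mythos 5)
The paper does not prove this lemma: it is quoted verbatim from Costello \cite[\S 4.1]{costello} as an input, so there is no in-paper argument to compare against. Your outline is the standard (and almost certainly Costello's own) route: reduce to the coarse prestable comparison $\pi^*\psi_t = \psi_t - \sum_D [D]$ by iterating one-tail forgetful maps, then pick up the factor $m(t)$ and the edge multiplicities $m(e)$ from the ramification of the coarsification $r$ (twisted marked point $z\mapsto z^{m}$ gives $r^*\psi_t = m(t)\psi_t$; the versal smoothing parameter of a twisted node is an $m(e)$-th root of the coarse one, giving $r^*[D] = m(e)[D^{\mathrm{tw}}]$), and finally transport along $s$. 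That skeleton is correct and produces exactly the shape $m(t)\psi_t - \sum_f \bigl(\sum_{e\in E(s(\gamma))} m(e)S(e,t,I)\bigr)[\mathfrak{M}_f]$.

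Two places are asserted where the actual content lies. First, the $s^*$ step: a boundary divisor $D_e$ of $\mathfrak{M}_{s(\eta)}$ (source curve has a node of type $e$) pulls back to the union of the strata $\mathfrak{M}_f$ with $e\in E(s(\gamma))$, and you need this pullback to have multiplicity exactly one along each $\mathfrak{M}_f$; this requires identifying the deformation space of the \'etale cover with that of the target curve and checking that the twisted smoothing parameter of the source node $e$ is a unit times a local equation for $\mathfrak{M}_f$. Your parenthetical that ``an \'etale cover of a balanced node is a disjoint union of nodes of the same order'' is false in general --- the stabilizer of a source node is only a subgroup of $\mu_{m_0}$, so the orders $m(e)$ within one fiber can differ --- and it is precisely this discrepancy that makes the $s^*$ multiplicity computation nontrivial and forces the formula to sum $m(e)$ edge by edge rather than once per target node. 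Second, the induction on $\#I$ needs the explicit statement that $\pi_i^*$ of an intermediate boundary divisor decomposes as a sum of boundary divisors of the larger space with coefficient one (no excess intersection terms), which is what makes the corrections telescope to the single sum over divisors where $t$ sits on a contracted vertex. Neither point is fatal, but both are exactly the ``multiplicity bookkeeping'' you defer, and the lemma is false if either multiplicity comes out wrong.
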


A similar result holds in $K$-theory:

\begin{lemma}
For each $t \in T(\nu(I))$, we have
\[
\begin{split}
s^* r^* \pi^* (L_t) &= L_t ^{\otimes m(t)} \otimes \OO\big(- \sum_{f: \gamma \rightarrow \eta} S(f,t,I)\mathfrak{M}_f \big) 
,
\end{split}
\]
with $f: \gamma \rightarrow \eta$ and $S(f,t,I)$ defined in previous lemma. 
\end{lemma}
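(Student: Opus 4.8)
The plan is to deduce the $K$-theoretic statement from the Chow statement of the previous lemma by lifting the divisor-class identity to an identity of line bundles. The key observation is that Costello's formula in Chow is really an identity of Cartier divisor classes: both sides of $s^*r^*\pi^*(\psi_t) = m(t)\psi_t - \sum_f S(f,t,I)[\mathfrak{M}_f]$ are first Chern classes of explicit line bundles, namely $c_1(L_t^{\otimes m(t)} \otimes \mathcal{O}(-\sum_f S(f,t,I)\mathfrak{M}_f))$ on the left composite versus $c_1$ of the pullback line bundle $s^*r^*\pi^*L_t$ on the right. So the real content is to promote the equality of Chern classes to an equality of line bundles on the relevant moduli stack of prestable curves.

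First I would recall that Costello's argument in \cite[\S 4.1]{costello} actually produces the stronger line-bundle statement locally: the comparison of cotangent lines at a marked point before and after a contraction $\pi$ is governed by where the marked section lands relative to the contracted components, and on each stratum the cotangent line either pulls back unchanged or acquires a twist by the conormal bundle of the boundary divisor being contracted. Concretely, for a single contraction forgetting a tail, there is a canonical map $\pi^*L_t \to L_t$ which is an isomorphism away from the boundary divisor $D$ where the component carrying $t$ gets contracted, and vanishes to order one along $D$; this gives $\pi^* L_t \cong L_t \otimes \mathcal{O}(-D)$ as line bundles, not merely in Chow. Iterating over the stabilization, and tracking multiplicities via the integers $m(t)$ (which record how many times a tail is ``stabilized'' through the tower) and the coefficients $S(f,t,I)$ exactly as in the Chow computation, yields the stated formula. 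The maps $s$ and $r$ (passing to the étale cover source and to coarse space) are étale, hence pull back cotangent line bundles compatibly, so applying $s^*r^*$ commutes with these tensor identities.

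Concretely, the steps I would carry out are: (1) reduce to the case of a single tail-forgetting contraction $\pi : \mathfrak{M}_{r(s(\eta))} \to \mathfrak{M}_{\nu(I)}$, since the general $\pi$ is a composite and tensor identities of line bundles compose; (2) on the universal curve, exhibit the canonical morphism $\pi^*L_t \to L_t$ and identify its vanishing locus with the boundary divisor $\sum_f S(f,t,I)\mathfrak{M}_f$, using the local description of contractions — this is where one reuses Costello's stratum-by-stratum analysis but reads it as a statement about the morphism of line bundles rather than just divisor classes; (3) account for the multiplicity $m(t)$, which enters because a tail may be stabilized through several layers of the tower, each contributing a factor of $L_t$; (4) pull back along the étale maps $s$ and $r$, which is harmless since étale pullback preserves relative cotangent bundles and tensor products. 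Finally, take the first Chern class of both sides to confirm consistency with the preceding lemma.

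The main obstacle I anticipate is step (2): one must check that the natural comparison map between cotangent lines is exactly divisorial — i.e., that it vanishes to order precisely $S(f,t,I)$ along each boundary component $\mathfrak{M}_f$ and is an isomorphism elsewhere — rather than merely agreeing with $L_t^{\otimes m(t)}$ up to an a priori unknown line bundle supported on the boundary. Since $\mathfrak{M}$ of these graph categories is smooth and the boundary is a normal crossings divisor, a line bundle trivial in Chow away from codimension $\geq 2$ is trivial, so matching Chern classes together with the explicit local construction of the comparison map pins down the line bundle on the nose; the bookkeeping of which tails sit on contracted components (the definition of $S(e,t,I)$ via the curves $\gamma_e$) transfers verbatim from Costello's Chow computation. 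The remaining points — compatibility with the composite $\pi$ and with $s^*r^*$ — are formal.
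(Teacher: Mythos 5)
The paper states this lemma without proof, so there is nothing to compare your argument against; your overall strategy---realize the Chow identity as $c_1$ of an identity of line bundles and prove the latter by exhibiting the canonical comparison map of cotangent lines and computing its divisor of vanishing---is the right one, and is how the untwisted prototype $\pi^* L_i \cong L_i \otimes \OO(-D_{i,n+1})$ is proved. But there is a genuine error in your steps (3) and (4): you misidentify the source of the multiplicities. The exponent $m(t)$, and the factors $m(e)$ inside $S(f,t,I) = \sum_{e} m(e)\, S(e,t,I)$, do \emph{not} arise from ``several layers of the tower'' of contractions $\pi$: iterating the single-contraction identity $\pi_i^* L_t \cong L_t \otimes \OO(-D_i)$ only accumulates boundary divisors and never produces a tensor power of $L_t$. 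They arise from the map $r$, which passes from the twisted curve to its coarse space. At a marked gerbe of order $m(t)$ the twisted cotangent line is an $m(t)$-th root of the coarse cotangent line, i.e.\ $r^* L_t^{\mathrm{coarse}} \cong (L_t^{\mathrm{tw}})^{\otimes m(t)}$, and a boundary divisor indexed by an edge $e$ pulls back under $r$ with multiplicity $m(e)$ because of the balanced $\mu_{m(e)}$ structure at the node. So $r$ is precisely the map that is \emph{not} ``harmless'' for cotangent lines and boundary divisors; carrying out your steps as written would yield the formula with all $m(t)$ and $m(e)$ equal to $1$, which is not the statement. The fix is to insert the root-of-line-bundle comparison for $r$ at the marked points and at the nodes, exactly where Costello's Chow computation inserts the same multiplicities.

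A secondary caution: since the paper works with $\QQ$-coefficients, you cannot recover the line-bundle (hence $K$-theoretic) identity from the $\QQ$-Chow identity by ``matching Chern classes'' plus a codimension argument. On a DM stack a torsion line bundle (e.g.\ the sign character on $B\ZZ/2$) has trivial $c_1$ in $A^1 \otimes \QQ$ yet is a nontrivial class in $K^\circ \otimes \QQ$, and the boundary strata are exactly where such torsion can live. Your explicit construction of the comparison map is therefore not a convenience but the entire content of the proof; the preceding Chow lemma can only serve as a consistency check, not as the input.
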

We would like to write $\mathcal{O}(k\mathfrak{M}_f)$ in terms of a torsion sheaf. 
%Let $f: \gamma \rightarrow \eta$ in $\Gamma_c$ with $\# E(f) \geq 1$. We write $\mathfrak{M}_{f_e}$ to be the divisor corresponding to $e\in E(f)$. Note that $\OO_{\mathfrak{M}_f} = \prod_{e\in E(f)} \OO_{\mathfrak{M}_{f_e}}$.
%\begin{lemma}
%For each $t \in T(\nu(I))$, we have
%\[
%s^* r^* \pi^* (L_t) = L_t^{\otimes m(t)} + \sum_{f:\gamma \rightarrow \eta} (-1)^{\# E(f)} \prod_{e\in E(f)} \Big( 1+L_e+\dots + L_e^{S(f_e,t,I)-1}\Big) \OO_{\mathfrak{M}_f} ,
%\]
%where the sum is over $f:\gamma \rightarrow \eta$ in $\Gamma_c$ of type $0$ with $\# E(t(\gamma)) = \# E(f)$. 
%\end{lemma}
%\begin{proof}
%Let $D$ be a divisor with $r^* D = mD'$. Then we have
%\[
%\begin{split}
%    r^* \OO_D = r^* (\OO - \OO(-D) ) &= \OO - \OO(-mD') 
%    \\
%    &= \OO - (\OO-\OO_{D'})^m = (1+L + \dots + L^{m-1}) \OO_{D'},
%\end{split}
%\]
%where $L$ is characterized by $\OO_{D'} \cdot \OO_{D'} = (1-L)\OO_{D'}$. 

%The general (higher codimension) case follows easily from 
%\[
%\OO_{\mathfrak{M}_f} = \prod_{e\in E(f)} \OO_{\mathfrak{M}_{f_e}}.
%\]
%\end{proof}

\subsection{Explicit formulas for stabilizing $\psi$ classes in $K$-theory}

Before addressing stable maps to a target $V$, we work on the moduli of curves. Let $\pi: \Ms[g,m+n] \rightarrow \Ms[g,m]$. 
We introduce the following notations:

\begin{figure}
    \centering
    \begin{tikzpicture}[scale = .7]
            % A clipped circle is drawn
        \begin{scope}
            \clip (-1.5,-1.5) rectangle (1,1.5);
            \draw (0,0) ellipse (1.5 and 1);
        \end{scope}
        \begin{scope}
            \clip (2,-1.5) rectangle (4.5,1.5);
            \draw (3,0) ellipse (1.5 and 1);
        \end{scope}
        \node at (-1.5, 1.5){$C$};
        %connect the two pieces
        \draw (1, .7454) ..controls(1.5, .55).. (2, .7454);
        \draw (1, -.7454) ..controls(1.5, -.55).. (2, -.7454);
        %holes
        \draw (-.5, .08) ..controls(0, -.08).. (.5, .08);
        \draw (-.3, 0) ..controls(0, .08).. (.3, 0);
        \draw (2.5, .08) ..controls(3, -.08).. (3.5, .08);
        \draw (2.7, 0) ..controls(3, .08).. (3.3, 0);
        %rational tail on 1
        \fill[white] (-.75, -1) circle (.75);
        \fill[white] (-1.85, -2) circle (.75);
        \fill[white] (-3, -3) circle (.75);
        \draw (-.75, -1) circle (.75);
        \draw (-1.85, -2) circle (.75);
        \draw (-3, -3) circle (.75);
        %marked points on rational tail on 1
        \node[left] at (-.75, -1){14};
        \fill (-.9, -1.3) circle (.08);
        \node[right] at (-.75, -1){7};
        \fill (-.5, -1.3) circle (.08);
        \node[above] at (-.75, -1){5};
        \fill (-.95, -.5) circle (.08);
        \node[left] at (-1.85, -2){8};
        \fill (-1.85, -2) circle (.08);
        \node[above] at (-3.5, -3){4};
        \fill (-3.5, -3) circle (.08);
        {\color{violet}
        \node[left] at (-2.5, -3){1};
        \fill (-2.5, -3) circle (.08);}
        %rational tail on 2
        \fill[white] (1.5, 1.05) circle (.75);
        \fill[white] (2.2, 2.4) circle (.75);
        \fill[white] (2.9, 3.7) circle (.75);
        \draw (1.5, 1.05) circle (.75);
        \draw (2.2, 2.4) circle (.75);
        \draw (2.9, 3.7) circle (.75);
        %marked points on rational tail on 2
        \node[left] at (2.7, 3.7){6};
        \fill (2.7, 3.7) circle (.08);
        {\color{violet}
        \node[right] at (3, 3.7){2};
        \fill (3.1, 3.7) circle (.08);}
        \node[right] at (2.5, 2.4){10};
        \fill (2.6, 2.4) circle (.08);
        \node[left] at (2, 2.4){13};
        \fill (2, 2.4) circle (.08);
        \node[below] at (2.2, 1.9){12};
        \fill (2.2, 2) circle (.08);
        \node[left] at (1.5, 1.05){11};
        \fill (1.5, 1.05) circle (.08);
        %rational tail on 3
        \fill[white] (3.45, -.92) circle (.75);
        \draw (3.45, -.92) circle (.75);
        %marked points on rational tail on 3
        \node[below left] at (3.35, -.82){9};
        \fill (3.25, -.82) circle (.08);
        {\color{violet}
        \node[below right] at (3.55, -.82){3};
        \fill (3.65, -.82) circle (.08);}
        %Marked points
        {\color{violet}\fill (-.2, -.5) circle (.08);
        %\node[left] at (-3, -3){1};
        %\node[left] at (-.2, -.5){1};
        \fill (1.5, .3) circle (.08);
        %\node[right] at (1.5, .3){2};
        \fill (3, -.3) circle (.08);
        %\node[right] at (3, -.3){3};
        }
        \begin{scope}[shift={(10, 0)}]
            \begin{scope}
            \clip (-1.5,-1.5) rectangle (1,1.5);
            \draw (0,0) ellipse (1.5 and 1);
        \end{scope}
        \begin{scope}
            \clip (2,-1.5) rectangle (4.5,1.5);
            \draw (3,0) ellipse (1.5 and 1);
        \end{scope}
        \node at (-1.5, 1.5){$C$};
        %connect the two pieces
        \draw (1, .7454) ..controls(1.5, .55).. (2, .7454);
        \draw (1, -.7454) ..controls(1.5, -.55).. (2, -.7454);
        %holes
        \draw (-.5, .08) ..controls(0, -.08).. (.5, .08);
        \draw (-.3, 0) ..controls(0, .08).. (.3, 0);
        \draw (2.5, .08) ..controls(3, -.08).. (3.5, .08);
        \draw (2.7, 0) ..controls(3, .08).. (3.3, 0);
        {\color{violet}\fill (-.2, -.5) circle (.08);
        \node[left] at (-.2, -.5){1};
        \fill (1.5, .3) circle (.08);
        \node[right] at (1.5, .3){2};
        \fill (3, -.3) circle (.08);
        \node[right] at (3, -.3){3};}
        \end{scope}
    \end{tikzpicture}
    \caption{The rational tails corresponding to the decoration $\underline{a} = (\underline{a}_1, \underline{a}_2, \underline{a}_3)$ of type $0$, with\\
    $\underline{a}_1 = (a_{111}) (a_{121}) (a_{131}, a_{132}, a_{133}) = (4)(8)(5,7,14)$\\
    $\underline{a}_2 = (a_{211}) (a_{221}, a_{222}, a_{223})(a_{231}) = (6) (10, 12, 13) (11)$ \\
    $\underline{a}_3 = (a_{311}) = (9)$. \\
    The smallest numbered marked point on each rational tail (except 1,2, and 3) must be on the $\PP^1$ farthest from the main component to be type 0. 
    %The type $0$ condition means that on each rational tail, the numbers decrease as the components get farther from the main genus $g$ component. 
    }
    \label{fig:decorations}
\end{figure}

%\Leo{Remove some of the marked points -- superfluous}

\begin{itemize}
    \item (Decoration). Decorations index trees of rational curves to be contracted under forgetting points and stabilizing. See Figure \ref{fig:decorations}. 
    
    For the special case $m=1$, we denote a decoration of degree $r$ as follows:
    \[
    \underline{a} = (a_{1,1},\dots, a_{1,n_1})\dots (a_{r,1}, \dots, a_{r,n_r}).
    \]
    We further assume that 
    \[
    \{ a_{1,1}, \dots,a_{1,n_1}, \dots, a_{r,1},\dots,a_{r,n_r} \} \subset \{2,3,\dots,n+1\} = [2,n+1] 
    \]
    and
    \[
    a_{i,1}< a_{i,2} < \dots < a_{i,n_i}
    \] 
    for all $i$. 
    
    For the general case, a corresponding decoration is denoted as
    \[
    \underline{a} = (\underline{a}_1,\underline{a}_2,\dots,\underline{a}_m)
    \]
    where each $\underline{a}_i$ is a decoration in the special case $m=1$ and their disjoint set union forms a subset of $[m+1,m+n]$. We also denote it by $\underline{a}$ if no confusion may occur.
    \item (Degeneration strata). Given $m=1$ and a decoration $\underline{a}$, we define the corresponding degeneration strata of codimension $r$ on $\Ms[g,1+n]$ as follows:
    \[
    \Ms[g,1+n] \supset D_{1,\underline{a}} := \left ( \bigcap_{1\leq i \leq r} D_{1 a_{1,1} \dots a_{1,n_1}a_{2,1}\dots a_{i,n_i}} \right ),
    \]
    where $D_{abcde...}$ is the divisor with markings $abcde...$ lie on the rational tail. This is the closure of the locus where the curves have rational tails indexed by $\underline{a}$. 
    
    For the general case, given a decoration $\underline{a},$ we define the corresponding stratum 
    \[
    \Ms[g,m+n] \supset D_{\num{m},\underline{a}} := \bigcap_{1\leq i \leq m} D_{i,\underline{a}_i}.
    \]
    \item (Normal bundle). Given $m=1$ and a strata $D_{1,\underline{a}}$ as above, we have
    \[
    \OO_{D_{1,\underline{a}}}^2 = \lambda_{-1}(\oplus_{i=1}^r L_{1i}) \OO_{D_{1,\underline{a}}}.
    \]
    Here $L_{1i}$ is the normal bundle of the $i$th node.
    %\Leo{What does it have to do with smoothing it? Isn't it just the normal bundle of the $i$th node, period? }
    %\YC{You are right! I was thinking the global section of normal bundle is the parameter space of the deformation (if there is no obstruction). I usually  understand the deformation of nodal curve as smoothing the node.}
    More precisely, it can be described as follows:
    \[
    \OO_{D_{1,a_{1,1}\dots a_{i,n_i}}}^2 = \lambda_{-1}(\tilde{L_i})\OO_{D_{1,a_{1,1}\dots a_{i,n_i}}}
    \]
    Then we define
    \[
    L_{1i} :=  \tilde{L_i} |_{D_{1,\underline{a}}}.
    \]
    For the general case, we denote by $L_{ji}$ the normal bundle of the $i$th node of the tail containing the marking $j$ for $1\leq j\leq m$.
    
    \item (Type). Given $m=1$ and a decoration $\underline{a}$ or its corresponding strata $D_{A_1\underline{a}}$, we define its \emph{type} to be $0$ if 
    \[
    a_{1,1} <a_{j,k} \mbox{ for any $(j,k)\neq (1,1)$};
    \]
    we define its type to be $\underline{l} = (l_1\dots l_s)$ if
    \[
    \begin{split}
        &a_{1,1} > a_{2,1} > \dots > a_{l_1-1,1}
        \\
        &a_{l_1,1} > a_{l_1+1,1} > \dots > a_{l_2-1,1}
        \\
        &\qquad\qquad \vdots
        \\
        &a_{l_s,1}> a_{l_s+1,1} > \dots > a_{r,1}
    \end{split}
    \]
    and 
    \[
    a_{1,1} > a_{l_1,1} > \dots > a_{l_s,1}.
    \]

    Given a general decoration $\underline{a} = (\underline{a}_1,\dots, \underline{a}_m)$, we define its type on each $\underline{a}_i$ as in the special case and also denote it by $\underline{l}$ if no confusion may occur. We also say it is of type $0$ if $\underline{a}_i$ is of type $0$ for all $i$.

    \item (Polynomial corresponding to decoration). Given $m=1$ and a decoration $\underline{a}$ of type $(0)$, we define its corresponding polynomial with $r$ variables to be
    \[
    F_{1,\underline{a}}(x_1,\dots,x_r) =  (1-\prod_{i=1}^r x_i).
    \]
    For a partition $\underline{a}$ of type $(l_1,\dots,l_s)$, we define its corresponding polynomial to be
    \[
    F_{1,\underline{a}}(x_1,\dots, x_r) = \sum_{j=0}^s \Big (1- \prod_{i=l_j}^{l_{j+1}-1} x_i \Big).
    \]
    Here we set $l_0=1$ and $l_{s+1} = r+1$.
    
    For the general case, given a decoration $\underline{a} = (\underline{a}_1,\dots, \underline{a}_m)$, we define $F_{i,\underline{a}_i}$ as in the special case for all $i=1,\dots,m$.
    \item (Difference operator). 
    We define the difference operator $\delta$ on a (multi-)variable polynomial $F(\underline{x}) = F(x_1,\dots,x_n)$ as follows:
    \[
    \delta(F(\underline{x})) := \frac{F(\underline{x}) -\sum_i F(\underline{x})|_{x_i=1} + \dots + (-1)^n F(\underline{x})|_{x_1=\dots=x_n=1}}{(1-x_1)\cdots (1-x_n)} 
    \]
    %\Leo{in what sense is this induction? Base case?}
    %\YC{I change a way to define it. Is that clear now?}
\end{itemize}
\begin{lemma}
Let $\pi: \Ms[g,m+n] \rightarrow \Ms[g,m]$. Then we have 
\[
\pi^* (L_1) = L_1 + \sum_{\underline{a}:{\rm type}\ 0} (-1)^{{\rm codim} D_{\underline{a}}} \OO_{D_{\underline{a}}} \in K^0(\Ms[g,m+n]). 
\]
\end{lemma}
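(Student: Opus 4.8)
The strategy is to reduce the general case $\pi : \Ms[g,m+n] \to \Ms[g,m]$ to a composition of forgetful maps that each drop a single marked point, so that the problem becomes one of understanding a single forgetful map and then iterating. For a single point the formula $\pi^*(L_1) = L_1 \otimes \OO(-D_{1\,(n+1)})$ in $K$-theory (equivalently, on Chow level, $\psi_1 \mapsto \psi_1 - [D_{1\,(n+1)}]$) is classical, and one rewrites $\OO(-D)$ as $\OO - \OO_D$, picking up the boundary divisor $D$ with a sign. The point of the lemma is to track what happens to these boundary contributions as one composes many such one-point forgetful maps: the rational tails organize themselves exactly according to \emph{type-$0$} decorations, and the alternating sign $(-1)^{\mathrm{codim}\,D_{\underline a}}$ comes from repeated use of $\OO(-D) = \OO - \OO_D$ together with the self-intersection formula $\OO_{D_{1,\underline a}}^2 = \lambda_{-1}(\oplus L_{1i})\,\OO_{D_{1,\underline a}}$ recorded above.

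Concretely, I would first treat the case $m=1$ and factor $\pi : \Ms[g,1+n] \to \Ms[g,1]$ as $\pi_n \circ \cdots \circ \pi_1$, where $\pi_j$ forgets the $(j{+}1)$st marked point from $\Ms[g,1+j]$. Applying the one-point formula repeatedly, $\pi^*(L_1)$ becomes $L_1$ tensored with a product $\prod_j \OO(-D^{(j)})$ of boundary divisors pulled back to $\Ms[g,1+n]$, where $D^{(j)}$ is the locus where the $(j{+}1)$st point has collided onto a rational tail carrying point $1$. The key combinatorial claim is that, after pulling everything back to $\Ms[g,1+n]$ and expanding $\OO(-D^{(j)}) = \OO - \OO_{D^{(j)}}$, the products $\OO_{D^{(j_1)}} \cdots \OO_{D^{(j_s)}}$ that survive are precisely the structure sheaves $\OO_{D_{\underline a}}$ of type-$0$ strata, each appearing with coefficient $(-1)^{\mathrm{codim}}$; here the ``type $0$'' condition — that the smallest-numbered point on each rational tail sits farthest from the main component — is exactly the condition that the order in which the points were forgotten is compatible with the nested structure of the tail, so that the corresponding intersection of boundary divisors is transverse (nonempty of the expected codimension) and not further corrected by self-intersection terms. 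Intersections that are \emph{not} of type $0$ either vanish or get reorganized, via the normal bundle formula $\OO_{D}^2 = \lambda_{-1}(\oplus L_{1i})\OO_D$, into combinations that telescope; because we are eventually only pulling back the line bundle $L_1$ (degree $1$ in the $L$'s), all higher $\lambda_{-1}$ terms cancel and only the top stratum $\OO_{D_{\underline a}}$ with a single sign remains. For general $m$ one runs the same argument in each of the $m$ ``legs'' independently and multiplies, since the decorations split as $\underline a = (\underline a_1,\dots,\underline a_m)$ and tails attached at different original points $1,\dots,m$ meet in the expected codimension; this gives the stated sum over all type-$0$ decorations.

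The main obstacle I anticipate is the bookkeeping of the \emph{type-$0$} condition: one must show that exactly the type-$0$ decorations survive with coefficient $(-1)^{\mathrm{codim}}$, and that all other configurations — corresponding to rational tails whose points were ``forgotten in the wrong order'' — either produce empty intersections or cancel in pairs/telescope after applying the self-intersection formula. This is where Costello's $S(e,t,I)$ multiplicities and the difference operator $\delta$ from the preceding subsection are really doing their work: the lemma as stated is the special clean case where the insertion is just $L_1$ (so the polynomials $F_{1,\underline a}$ contribute trivially and $\delta$ collapses), but proving it honestly still requires checking that the combinatorics of which strata appear matches the type-$0$ indexing. A secondary, more technical point is justifying the passage from $\OO(-D)$ to $\OO - \OO_D$ inside $K^0$ (as opposed to $K_\circ$) and ensuring all the intermediate stacks are smooth so that $K^0 = K_\circ$ and the self-intersection formula is available; this should follow since $\Ms[g,k]$ is smooth and the boundary strata $D_{\underline a}$ are smooth substacks meeting with normal crossings.
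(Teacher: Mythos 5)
Your overall skeleton --- factor $\pi$ into one-point forgetful maps and iterate the classical comparison $\pi_{j}^*L_1 = L_1\otimes\OO(-D)$ --- is the same as the paper's, but the mechanism you propose for the iteration has a concrete gap, and you have misdiagnosed where the difficulty lies. In your multiplicative expansion each boundary divisor $D_{1S}$, $S\subseteq\{m+1,\dots,m+n\}$, appears with multiplicity exactly one in the total divisor, so the self-intersection formula $\OO_D^2=\lambda_{-1}(N^\vee)\OO_D$ is not what saves you. The real problem is twofold. First, the naive expansion of $\prod_S(1-[\OO_{D_{1S}}])$ produces strata that are \emph{not} of type $0$: already for $m=1$, $n=2$ one gets the term $+[\OO_{D_{13}}][\OO_{D_{123}}]=[\OO_{D_{13}\cap D_{123}}]$, and $D_{13}\cap D_{123}=D_{1,(3)(2)}$ is a non-type-$0$ stratum that does not appear in the asserted answer. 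Second, you cannot replace $[L_1]\cdot[\OO_{D_{1S}}]$ by $[\OO_{D_{1S}}]$: on $D_{123}$ the point $1$ lies on a four-pointed rational component, so $L_1|_{D_{123}}$ is a nontrivial $\psi$-class, and $[L_1\otimes\OO_{D_{123}}]=[\OO_{D_{123}}]+[\OO_{D_{13}\cap D_{123}}]$. It is precisely this correction that cancels the spurious non-type-$0$ term. Your proposal asserts that such terms ``vanish or telescope'' but offers no mechanism, and the mechanism you do name (self-intersections) is not the relevant one; carrying out this cancellation in general is the whole content of the lemma and would require a separate combinatorial argument.

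The paper avoids this entirely by running the induction \emph{additively} rather than multiplicatively. Assuming the formula after forgetting $n-1$ points, one applies $\pi_{m+n}^*$ term by term: the $L_1$ term pulls back by the one-point formula (where $L_1$ really is trivial on the relevant three-pointed tail), and each structure sheaf $\OO_{D_{\underline a'}}$ pulls back, using flatness of $\pi_{m+n}$, to $\OO_{\pi_{m+n}^{-1}(D_{\underline a'})}$, which is computed by inclusion--exclusion over the irreducible components of the preimage; these components and their intersections are again type-$0$ strata, with signs matching $(-1)^{\mathrm{codim}}$. In the $n=2$ example this gives $\pi_3^*\OO_{D_{12}}=\OO_{D_{12}}+\OO_{D_{123}}-\OO_{D_{12}\cap D_{123}}$ and the correct answer drops out with no cancellation needed. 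If you want to salvage your line-bundle expansion, you must either prove the cancellation of all non-type-$0$ strata against the $L_1|_{D}$-corrections, or switch to the additive inclusion--exclusion step as above.
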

\begin{proof}
We decompose $\pi$ as 
\[
\Ms[g,m+n] \xrightarrow{\pi_{m+n}} \Ms[g,m+n-1] \rightarrow \dots \rightarrow \Ms[g,m+1] \xrightarrow{\pi_{m+1}} \Ms[g,m]
\]
and compute $\pi^* (L_1) = \pi_{m+n}^* \dots \pi_{m+1}^*(L_1)$ step by step. 

Given a strata $D \subset \Ms[g,m+n]$ of type $0$, we consider the inclusion-exclusion formula:
\[
\pi_{m+n}^* \OO_{\pi_{m+n}(D)} = \sum_{i=1}^r \OO_{D_i} - \sum_{i<j} \OO_{D_i\cap D_j} + \dots,
\]
where $\pi_{m+n}^*( \pi_{m+n}(D)) = \cup_{i=1}^r D_i$ with $D_i$ irreducible stratas.

Note that $\OO_D$ will show up in either the first or the second term of the right hand side depending on whether ${\rm ft}_{n+m}(D)$ is stable or not. Since $\pi_{n+m}(D)$ is still of type $0$, The lemma follows by induction.

\end{proof}

\begin{remark}
This expression is not symmetric with respect to indices $m+1,\dots ,m+n$ since we chose a special order of pull-backs. Different choices of pull-back order will result in different expressions. Nevertheless, any expression will give the same element in $K^0(\Ms[g,n+m])$.
\end{remark}

\begin{lemma} Let $\pi: \Ms[g,1+n] \rightarrow \Ms[g,1]$, $\underline{a}$ be a decoration, and $G$ be any polynomial or power series. Then
\[
{\rm Coeff} ( \OO_{D_{1,\underline{a}}}, \pi^*G(L_1)) = \delta ( G(L_1 - F_{1, \underline{a}})),
\]
where $F_{1,\underline{a}} = F_{1,\underline{a}}(L_{11}, \dots, L_{1, \deg \underline{a}})$ with $L_{1j}$ defined above. When applying $\delta$-operator on the right hand side, we view $G(L_1 - F_{1, \underline{a}})$ as polynomials with variables $\{L_{ij}\}$ and view $L_1$ as constant.
\end{lemma}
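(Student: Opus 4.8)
The strategy is to reduce the claim to the previous two lemmas by a coefficient extraction. First I would combine the earlier lemma expressing $\pi^*(L_1) = L_1 + \sum_{\underline b:\ \text{type }0}(-1)^{\operatorname{codim}D_{\underline b}}\OO_{D_{\underline b}}$ with the normal bundle relation $\OO_{D_{\underline b}}^2 = \lambda_{-1}(\oplus L_{\cdot i})\OO_{D_{\underline b}}$, so that any power series $G(\pi^*L_1)$ can be expanded in terms of the classes $\OO_{D_{\underline b}}$ with coefficients that are power series in the node-normal-bundle classes $L_{ij}$. The key point is that on a stratum $D_{1,\underline a}$ the relevant restriction of $\pi^*L_1$ differs from $L_1$ by exactly $F_{1,\underline a}(L_{11},\dots,L_{1,\deg\underline a})$, which is the content of Costello's $K$-theoretic $\psi$-pullback lemma: the correction term $\OO(-\sum S(f,t,I)\mathfrak M_f)$, restricted to $D_{1,\underline a}$, turns into the $\lambda_{-1}$-type polynomial $F_{1,\underline a}$ in the node bundles, with the type-$(l_1,\dots,l_s)$ bookkeeping recording which nested chains of tails get contracted.

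Concretely, I would first treat $m=1$. Expand $G(\pi^*L_1)$ using $\pi^*L_1 = L_1 + \sum_{\underline b}(-1)^{\operatorname{codim}}\OO_{D_{\underline b}}$, then use the self-intersection formula repeatedly to rewrite products $\OO_{D_{\underline b}}^k$ and products $\OO_{D_{\underline b}}\OO_{D_{\underline c}}$ (which vanish unless the strata are nested, in which case they combine into a deeper stratum) in terms of single $\OO_{D_{\underline a}}$'s times polynomials in the $L_{ij}$. Extracting the coefficient of a fixed $\OO_{D_{1,\underline a}}$ then amounts to a purely combinatorial sum over the ways the chain of tails indexed by $\underline a$ can arise, weighted by signs from the codimension and by the $\lambda_{-1}$-factors; this is precisely the inclusion–exclusion that the difference operator $\delta$ encodes, applied to $G$ evaluated at $L_1 - F_{1,\underline a}$. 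I would verify the base cases: a single tail (type $0$, $r=1$) gives $F_{1,\underline a} = 1 - L_{11}$ and $\delta$ in one variable reproduces the naive pullback formula, and I would check one type-$(l_1)$ example to confirm the summand structure of $F_{1,\underline a}$ matches the nesting combinatorics.

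For the general $m$ case, the decomposition $\Ms[g,1+n]\to\Ms[g,1]$ is replaced by the observation that forgetting and stabilizing at the $m$ marked points proceeds independently on the tails attached near each of the first $m$ markings, so the stratum $D_{\num m,\underline a} = \bigcap_i D_{i,\underline a_i}$, the polynomial factors as a product $\prod_i F_{i,\underline a_i}$ in disjoint sets of node variables, and $\delta$ is multiplicative over disjoint variable sets; hence the $m=1$ result applied marking-by-marking assembles into the stated formula. The main obstacle I anticipate is the bookkeeping in the self-intersection step for non-type-$0$ decorations: one must be careful that when two strata $D_{\underline b}, D_{\underline c}$ meet, the resulting deeper stratum is again one of the $D_{\underline a}$ appearing in the sum, and that the signs and $\lambda_{-1}$-contributions aggregate to exactly the $\sum_{j=0}^s(1 - \prod_{i=l_j}^{l_{j+1}-1}x_i)$ shape of $F_{1,\underline a}$ rather than some other partition of the node variables. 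Getting the order-of-pullback dependence to wash out in $K$-theory (as flagged in the preceding remark) is the conceptual crux, and I would handle it by checking that any two orders give expressions differing by relations already imposed by the self-intersection formula.
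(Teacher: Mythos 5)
Your plan is essentially the paper's own argument: expand $G(\pi^*L_1)$ via the preceding pullback lemma, use the self-intersection relation $\OO_{D}^2=(1-L_{1i})\OO_{D}$ to reduce powers and nested products, recognize the resulting coefficient extraction as the inclusion–exclusion encoded by $\delta$, and identify $F_{1,\underline a}$ by restricting the type-$0$ correction terms to the stratum $D_{1,\underline a}$ (the paper does exactly this, working out $n=2$ explicitly and then computing $-\sum_{\underline a'}(-1)^{\operatorname{codim}}\OO_{D_{1,\underline a'}}\cdot\OO_{D_{1,\underline a'}}|_{D_{1,\underline a}}=F_{1,\underline a}\OO_{D_{1,\underline a}}$). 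The combinatorial crux you flag — that the signs and $\lambda_{-1}$-contributions for non-type-$0$ decorations aggregate to the summand structure of $F_{1,\underline a}$ — is precisely the "direct computation" the paper leaves implicit, so your proposal matches both in method and in where the remaining work lies.
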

\begin{proof}
We start with a special case $m=1$ and $n=2$. By previous lemma, we have
\[
\pi^* L_1 = L_1 - \OO_{D_{12}} - \OO_{D_{13}} -\OO_{D_{123}} + \OO_{D_{(12)(3)}}.
\]
Let $G(x)$ be any polynomial. To compute ${\rm Coeff}(\OO_{D_{(12)(3)}}, G(\pi^* L_1))$, we introduce the following process:

Write $x=\OO_{D_{12}}$ and $y = \OO_{D_{123}}$ and hence $\OO_{D_{(12)(3)}} = xy$. Now we have
\[
\begin{split}
    &{\rm Coeff}(\OO_{D_{(12)(3)}}, G(\pi^* L_1))
    \\
    & \qquad = \frac{G(L_1 - x-y +xy)+ G(L_1 -y) + G(L_1-x) + G(L_1)}{xy} \Big |_{x=1-L_{11},y=1-L_{12}}.
    \\
    & \qquad = \delta \Big(G(L_1 - (1-L_{11}L_{12}))\Big).
\end{split}
\]
Some remarks are in order:
\begin{itemize}
    \item $L_{11}$ and $L_{12}$ are characterized by $x^2 = (1-L_{11})x$ and $y^2 = (1-L_{12})y$.
    \item The second equality follows from the definition of $\delta$. Here we view $G(L_1 - (1-L_{11}L_{12}))$ as polynomials in $L_{11}$ and $L_{12}$.
    \item $1-L_{11}L_{12}$ is exactly the polynomial  $F_{1,(12)(3)}(L_{11},L_{12})$, i.e. the polynomial corresponding to the strata $D_{(12)(3)}$. 
\end{itemize}
For general case, we can compute the coefficient of $D_{1,\underline{a}}$ using the above computation process. It suffices to find the polynomial corresponding to $D_{1,\underline{a}}$. To find the polynomial, we compute
\[
-\sum_{\substack{\underline{a}':{\rm type 0} \\ D_{1,\underline{a}'} \supset D_{1,\underline{a}} }} (-1)^{{\rm codim} D_{1,\underline{a}'} }
\OO_{D_{1,\underline{a'}}} \cdot \OO_{D_{1,\underline{a'}}} \Big| _{D_{1,\underline{a}}} = F_{1,\underline{a}}(L_{11}, \dots, L_{1 \deg \underline{a}}) \OO_{D_{1,\underline{a}}}.
\]
A direct computation shows that it is exactly the polynomial we defined above.
\end{proof}
\begin{theorem}
Let $\pi: \Ms[g,m+n] \rightarrow \Ms[g,m]$ and $G=G(x_1,\dots,x_m)$ be any polynomial of $m$ variables. Then
\[
\begin{split}
&\pi^* G(L_1,\dots, L_m) = G(L_1,\dots, L_m) 
\\
&\quad + \sum_{\underline{a}: {\rm all \ decorations}} \OO_{D_{\num{m},\underline{a}}} \delta \Big( G(L_1 -F_{1,\underline{a}_1}, \dots, L_m - F_{m,\underline{a}_m}) \Big),
\end{split}
\]
where
$
F_{i,\underline{a}_i} = F_{i,\underline{a}_i}(L_{i1}, \dots, L_{i \deg \underline{a}_i})
$
with $L_{ij}$ defined above. When applying the $\delta$-operator on the right hand side, we view 
\[
G(L_1 -F_{1,\underline{a}_1}, \dots, L_m - F_{m,\underline{a}_m})
\]
as polynomials with variables $\{L_{ij}\}$ and view $L_i$ as constant. 
\end{theorem}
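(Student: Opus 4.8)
The plan is to reduce the general statement ($m\geq 1$ arbitrary) to the one-marked-point case $m=1$ that was already handled in the preceding lemma, using the standard decomposition of the forgetful map into a tower of one-point-forgetting maps, and then using multiplicativity to handle the interplay between the $m$ different rational tails. First I would observe that since $G$ is a polynomial in $m$ variables, by linearity it suffices to prove the formula for a monomial $G = x_1^{d_1}\cdots x_m^{d_m}$, and in fact by multiplicativity of pullback $\pi^*$ and of $\lambda_{-1}$/the structure-sheaf relations it suffices to understand how $\pi^* L_i^{d_i}$ behaves for a single $i$ and then multiply. The key point is that the tails attached to distinct marked points $1,\dots,m$ are generically disjoint in $\Ms[g,m+n]$, so the divisor classes $\OO_{D_{i,\underline a_i}}$ for different $i$ multiply to $\OO_{D_{\num m, \underline a}}$ with $\underline a = (\underline a_1,\dots,\underline a_m)$, and crucially the normal-bundle classes $L_{ij}$ for different $i$ are pulled back from disjoint parts of the universal curve, so they commute and behave independently under the $\delta$-operator.

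Next I would carry out the reduction to $m=1$ precisely. Factor $\pi$ as $\Ms[g,m+n]\to\Ms[g,m+n-1]\to\cdots\to\Ms[g,m]$, forgetting the points $m+n, m+n-1, \dots, m+1$ one at a time. Alternatively — and this is cleaner — factor through the intermediate map that forgets all the points not destined to end up as tails on the $i$th marked point, isolating the contribution to $L_i$. Applying the one-point lemma (the lemma immediately before this theorem, stating ${\rm Coeff}(\OO_{D_{1,\underline a}}, \pi^*G(L_1)) = \delta(G(L_1 - F_{1,\underline a}))$) to each factor gives the formula for $\pi^* L_i^{d_i}$ as $L_i^{d_i}$ plus a sum over decorations $\underline a_i$ of $\OO_{D_{i,\underline a_i}}\,\delta(L_i^{d_i}|_{L_i \mapsto L_i - F_{i,\underline a_i}})$, where one must be careful that $\delta$ here is the difference operator in the variables $\{L_{ij}\}_j$. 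Then multiply the $m$ expressions together: cross terms $\OO_{D_{i,\underline a_i}}\cdot\OO_{D_{i',\underline a_{i'}}}$ with $i\neq i'$ give exactly $\OO_{D_{\num m,\underline a}}$, and the products of the $\delta$-terms in disjoint variable sets recombine — because $\delta$ applied to a product of polynomials in disjoint variables is the product of the $\delta$'s, and $G(L_1 - F_{1,\underline a_1},\dots,L_m - F_{m,\underline a_m})$ for a monomial factors as $\prod_i (L_i - F_{i,\underline a_i})^{d_i}$ — into the single $\delta\big(G(L_1 - F_{1,\underline a_1},\dots,L_m - F_{m,\underline a_m})\big)$ appearing in the statement. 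Finally, extend from polynomials to the stated generality by linearity.

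The main obstacle I expect is verifying that the $\delta$-operator genuinely factors across the disjoint variable blocks in the precise way needed, i.e., that $\delta_{\{L_{ij}\}_{\text{all }i,j}}$ of a product $\prod_i H_i(\{L_{ij}\}_j)$ equals $\prod_i \delta_{\{L_{ij}\}_j}(H_i)$ — this follows from the product formula $(1-x_1)\cdots(1-x_N) = \prod_i (1-x_{i1})\cdots$ in the denominator of $\delta$ together with the inclusion–exclusion structure of the numerator, but it needs a clean combinatorial check. A related subtlety is that when two tails on different marked points $i\neq i'$ coincide as sets of points only if they share a point, which they cannot since each tail point label lies in a single block; so genericity of disjointness really does hold and the product of divisor classes is as claimed, with no correction terms. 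One also has to track that the normal bundles $L_{ij}$ really are independent — i.e. that restricting $\OO_{D_{i,\underline a_i}}^2$ does not interfere with $D_{i',\underline a_{i'}}$ — which again is geometrically clear since the relevant nodes lie over disjoint loci but should be stated. Once these independence facts are in hand, the theorem follows by bookkeeping from the one-point lemma.
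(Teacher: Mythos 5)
Your proposal is correct and follows essentially the same route as the paper: the paper's (very terse) proof likewise reduces to the preceding $m=1$ lemma, invokes the definition $D_{\num{m},\underline{a}} = \bigcap_i D_{i,\underline{a}_i}$ so that the structure sheaves of the single-point strata multiply to $\OO_{D_{\num{m},\underline{a}}}$, and uses exactly the factorization $\delta(\prod_i F_i(\underline{x}_i)) = \prod_i \delta(F_i(\underline{x}_i))$ over disjoint variable blocks that you identify as the main point to check. Your additional care about transversality of the strata and independence of the normal-bundle classes $L_{ij}$ is a reasonable elaboration of what the paper leaves implicit.
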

\begin{proof}
It follows from the definition 
\[
D_{\num{m},\underline{a}} := \bigcap_{1\leq i \leq m} D_{i,\underline{a}_i},
\]
and the following observation that if $F(\underline{x}_1,\dots,\underline{x}_r)= \prod_{i=1}^r F_i(\underline{x}_i)$, then
\[
\delta \Big( F(\underline{x}_1,\dots,\underline{x}_r) \Big) = \prod_{i=1}^r \delta \Big( F_i(\underline{x}_i) \Big).  
\]
Here $\underline{x}_i$ could be multi-indices.
\end{proof}

\begin{corollary}
Let $\pi: \Ms[g,m+n] \rightarrow \Ms[g,m]$. Then we have
\[
\pi^* e^{\sum_{i=1}^m r_i L_i} = e^{\sum_{i=1}^{m} r_iL_i} + \sum_{\underline{a}: {\rm all \ decorations}} \OO_{D_{\num{m},\underline{a}}} \prod_{i=1}^m \delta \Big(e^{ r_i (L_i - F_{i,\underline{a}_i}) } \Big).
\]
Use the same assumptions as in the previous theorem when applying $\delta$-operator on the right hand side.
\end{corollary}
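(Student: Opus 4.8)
The plan is to read off the corollary from the preceding Theorem by specializing the polynomial $G$ to the exponential generating function, and then to invoke the multiplicativity of the difference operator $\delta$.

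Concretely, I would work in $K^0(\Ms[g,m+n]) \otimes \QQ[[r_1,\dots,r_m]]$ and take $G(x_1,\dots,x_m) = e^{\sum_{i=1}^m r_i x_i} = \prod_{i=1}^m \sum_{k\ge 0} \frac{r_i^k}{k!}\,x_i^{k}$, where $x_i^{k}$ denotes the $k$-th tensor power of the line bundle. Although the Theorem is stated for a polynomial $G$, it extends to such a power series termwise: applying it to each monomial $G = \prod_i x_i^{k_i}$ gives
\[
\pi^*\Big(\prod_i L_i^{k_i}\Big) = \prod_i L_i^{k_i} + \sum_{\underline{a}} \OO_{D_{\num{m},\underline{a}}}\,\delta\Big(\prod_i (L_i - F_{i,\underline{a}_i})^{k_i}\Big),
\]
and the sum over decorations $\underline{a}$ is finite, since there are only finitely many decorations of the $n$ new marked points. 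Multiplying this identity by $\frac{\prod_i r_i^{k_i}}{\prod_i k_i!}$ and summing over all multi-indices $(k_1,\dots,k_m)$, the terms $\prod_i L_i^{k_i}$ reassemble into $e^{\sum_i r_i L_i}$. Moreover $\delta$ acts only on the node-bundle variables $\{L_{ij}\}$ and is $\QQ[[r_1,\dots,r_m]]$-linear, leaving the formal parameters $r_i$ and the constants $L_i$ untouched, so it commutes with this summation; hence the strata terms reassemble into
\[
\sum_{\underline{a}} \OO_{D_{\num{m},\underline{a}}}\,\delta\Big(\prod_{i=1}^m e^{r_i(L_i - F_{i,\underline{a}_i})}\Big).
\]

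It then remains to split the $\delta$ over the product. Since $F_{i,\underline{a}_i}$ is a polynomial in the node bundles of the $i$th tail only, the variable sets attached to distinct indices $i$ are pairwise disjoint, so $\prod_{i=1}^m e^{r_i(L_i - F_{i,\underline{a}_i})}$ is a product of power series in disjoint sets of variables, with the $L_i$ treated as constants exactly as in the Theorem. The identity $\delta\big(\prod_i F_i\big) = \prod_i \delta(F_i)$ established inside the proof of the Theorem — valid precisely for such products — then gives $\delta\big(\prod_i e^{r_i(L_i - F_{i,\underline{a}_i})}\big) = \prod_i \delta\big(e^{r_i(L_i - F_{i,\underline{a}_i})}\big)$, which is exactly the right-hand side of the corollary.

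I do not expect a genuine obstacle: the corollary is a bookkeeping reassembly of the Theorem. The one point deserving a word of care is the passage from polynomial $G$ to the exponential power series — one should pin down the ambient ring (the $K$-theory completed along the formal parameters $r_i$), so that the infinite sum defining $e^{\sum_i r_i L_i}$, the termwise application of the Theorem, and the interchange of $\delta$ with that sum are all legitimate. Granting this, the computation above is forced.
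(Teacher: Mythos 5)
Your proposal is correct and follows essentially the same route as the paper: substitute $G = e^{\sum_i r_i L_i}$ into the preceding theorem and split $\delta$ over the product using its multiplicativity on functions of disjoint variable sets. The extra care you take about extending the theorem from polynomials to power series termwise is a detail the paper leaves implicit, but it does not change the argument.
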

\begin{proof}
Take $G$ to be $e^{\sum_{i=1}^m r_iL_i}$ and apply the previous theorem. Notice that if $F(\underline{x}_1,\dots,\underline{x}_r)= \prod_{i=1}^r F_i(\underline{x}_i)$, then
\[
\delta \Big( F(\underline{x}_1,\dots,\underline{x}_r) \Big) = \prod_{i=1}^r \delta \Big( F_i(\underline{x}_i) \Big).  
\]
Here $\underline{x}_i$ could be multi-indices.
\end{proof}

\subsection{Application to the map $w: \Ks \to \Ms[g, R](X)$}

Continue to write $R = \ell-\# A = I \sqcup J$ for the number of marked points of $D$. Consider the map $p:\KpBSd  \rightarrow \Mp[g, R]$ sending a triple $C \leftarrow C' \rightarrow D$ to $D$ as before.

Write $M_i$, $L_i'$, and $L_i$ for the cotangent line bundles on $D$, $C'$ and $C$ respectively. Let $\iota : \num R \subseteq \num \ell$ be the inclusion of marked points -- $\iota(i) \in C'$ maps to $i \in D$. Write $\bar \iota (i)$ for the corresponding point of $C$.

Given $f: \gamma \rightarrow \eta$ lying over $D$, we define $F_{f,\bar \iota (i)}$ as follows:
\[
F_{f,\bar \iota (i)} = F_{i,\underline{a}_i} (L_{i1}^{m(1)}, \dots, L^{m(\deg \underline{a}_i)}_{i \deg \underline{a}_i}).
\]
The power $m$ is given by the ramification between nodes, $F_{i,\underline{a}_i}$ is defined in the point target case, and $\underline{a}_i$ is the combinatorial type of $f$. The definition of type is exactly the same as the point case.
\begin{proposition}\label{prop:psipb}
Let $G=G(x_1,\dots,x_R)$ be any power series
of $R$ variables. Then we have
\[
\begin{split}
& p^* \Big( G(M_1,\dots,M_R) \Big) = G(L_{\bar \iota (1)}^{m(t_1)},\dots,  L_{\bar \iota (R)}^{m(t_R)} )
\\
& + \sum_{f:\gamma\rightarrow \eta} \OO_{\frak{M}_f} \delta \Big( G(L_{\bar \iota (1)}^{m(t_R)} - F_{f,\bar \iota (1)}, \dots,  L_{\bar \iota (R)}^{m(t_R)} - F_{f,\bar \iota (R)} ) \Big),
\end{split}
\]
where $\delta$ only applies on variables $\{L_{ij}\}$.
\end{proposition}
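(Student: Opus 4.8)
The plan is to reduce Proposition~\ref{prop:psipb} to the already-established ``point-target'' formula for $\pi : \Ms[g, m+n] \to \Ms[g, m]$ (the Theorem in \S\ref{s:YCpsi} and its Corollary) by factoring the map $p$ through the map on underlying prestable curves and controlling the bookkeeping introduced by the ramified cover $C' \to C$. Concretely, the curve $D$ is the partial stabilization of $C'$ obtained by forgetting the marked points $A \subseteq \num{\ell}$, so the bundle $M_i$ on $D$ pulls back to $C'$ exactly in the pattern described by Costello's stabilization lemma recalled earlier (the $K$-theoretic Lemma: $s^* r^* \pi^*(L_t) = L_t^{\otimes m(t)} \otimes \OO(-\sum_f S(f, t, I) \frak{M}_f)$). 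The first step is therefore to identify $p^* M_i$ with $L'_{\iota(i)}$ twisted by the appropriate boundary corrections, and then to further rewrite $L'_{\iota(i)}$ in terms of $L_{\bar\iota(i)}$ on $C$: since $C' \to C$ is a degree-$d$ cover which near the marked point $\iota(i)$ has local ramification index $m(t_i)$, the cotangent line bundle on $C'$ at $\iota(i)$ is the $m(t_i)$-th tensor power of the pullback of the cotangent line bundle on $C$ at $\bar\iota(i)$. This explains the appearance of $L_{\bar\iota(i)}^{m(t_i)}$ in place of $M_i$ in the leading term.

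The second step is to package the boundary corrections. The strata $\frak{M}_f \hookrightarrow \Mp$ indexed by maps $f : \gamma \to \eta$ in $\Gamma^c$ with a single edge $\# E(f) = 1$ are precisely the analogues, in the cover setting, of the degeneration strata $D_{\num{m}, \underline a}$ appearing in the point-target Theorem; the combinatorial type $\underline a_i$ of $f$ records which rational tails of $C'$ get contracted over the $i$-th marked point, and the multiplicities $m(e)$ on edges $e \in E(s(\gamma))$ are the ramification indices between consecutive nodes. Thus one defines $F_{f, \bar\iota(i)} = F_{i, \underline a_i}(L_{i1}^{m(1)}, \dots, L_{i\deg\underline a_i}^{m(\deg\underline a_i)})$ exactly so that the normal-bundle relation $\OO_{\frak{M}_f}^2 = \lambda_{-1}(\oplus L_{ij}^{m(\cdot)}) \OO_{\frak{M}_f}$ holds, mirroring the relation $\OO_{D_{1, \underline a}}^2 = \lambda_{-1}(\oplus L_{1i}) \OO_{D_{1, \underline a}}$ used in the point-target case. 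With these definitions in place, the inclusion--exclusion/$\delta$-operator computation that proved the point-target Lemma ``$\mathrm{Coeff}(\OO_{D_{1, \underline a}}, \pi^* G(L_1)) = \delta(G(L_1 - F_{1, \underline a}))$'' goes through verbatim, edge by edge, because the $\delta$-operator is multiplicative over the factorization $F = \prod F_i$ (the observation used in the proof of the point-target Theorem) and the only change is the uniform substitution of powers $L \rightsquigarrow L^{m(t)}$ dictated by ramification.

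Accordingly, the third step is to assemble: apply the $K$-theoretic stabilization lemma to express $p^* M_i$, substitute the ramification-power identification $L'_{\iota(i)} = L_{\bar\iota(i)}^{m(t_i)}$, expand $G$, and collect the coefficient of each $\OO_{\frak{M}_f}$ using the already-proven coefficient formula with $F_{f, \bar\iota(i)}$ in place of $F_{1, \underline a}$. Since $G$ is a power series, one works coefficient-by-coefficient (or term-by-term in a filtration by codimension of $\frak{M}_f$) so that convergence is not an issue; this is the same maneuver as in the point-target Corollary for $e^{\sum r_i L_i}$.

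The main obstacle I expect is not the $\delta$-operator algebra, which is formal, but rather the precise comparison of the cotangent line bundles across the two stabilizations: one must check carefully that $L'_{\iota(i)}$, the cotangent line of the \emph{cover} $C'$ at the $i$-th remembered marked point, really is the $m(t_i)$-th power of the pullback of $L_{\bar\iota(i)}$ from $C$, including in the boundary strata where the node structure interacts with the ramification profile, and that the multiplicities $m(e)$ assigned to edges of $\gamma$ match the ramification indices appearing in Riemann--Hurwitz \eqref{eqn:riemannhurwitz}. This is exactly the place where the balanced-gerbe hypothesis on the twisted curves and the representability of $C' \to \Sym d X$ enter, and it is the step that requires the most care in matching conventions with \cite{costello} and \cite{mycostellogeneralization}.
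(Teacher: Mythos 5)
Your overall strategy coincides with the paper's: reduce to the point-target pullback formula, and account for the modifications forced by the ramified cover. The $\delta$-operator algebra is indeed formal and the paper treats it as such. However, your accounting of where the exponents come from is wrong in both places where they appear, and this is precisely the step you yourself flag as delicate. For the leading term, the paper's mechanism is: the \emph{twisted} cover $\tilde C' \to \tilde C$ is étale, so the twisted cotangent lines satisfy $L'_{\iota(i)} = L_{\bar\iota(i)}$ exactly, with \emph{no} power; the exponent $m(t_i)$ is already supplied by the $K$-theoretic stabilization lemma you quote, where it is the multiplicity introduced by the coarse-moduli map $r$, i.e.\ the order of the gerbe at the marked point $t_i$ of $\tilde C'$. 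Thus $p^* M_i = (L'_{\iota(i)})^{m(t_i)} \otimes \OO(-\cdots)$ and one then substitutes $L'_{\iota(i)} = L_{\bar\iota(i)}$. You instead describe the first step as identifying $p^* M_i$ with $L'_{\iota(i)}$ (no power) up to boundary corrections, and then reintroduce the power by asserting that the ramification of the \emph{coarse} cover $C' \to C$ makes $L'_{\iota(i)}$ an $m(t_i)$-th tensor power of the pullback of $L_{\bar\iota(i)}$. That assertion is false on two counts: the gerbe order at $\iota(i)$ and the coarse ramification index there are different integers in general (their product is the gerbe order of the point of $\tilde C$ below; e.g.\ a totally ramified point of $I$ has trivial gerbe upstairs, hence $m(t_i)=1$, but large ramification index), and for a ramified cover the tensor-power relation runs the other way, $f^* N^*_{f(p)} \cong (N^*_p)^{\otimes e}$. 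Note also that if one applies the quoted lemma as stated \emph{and} your claimed relation, the exponent comes out as $m(t_i)^2$, so your bookkeeping is internally inconsistent; it only lands on the stated formula because you silently dropped the power from the lemma before re-deriving it from the wrong source.

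The same confusion affects the boundary terms. The exponents in $F_{f,\bar\iota(i)}$ do not arise from a modified self-intersection relation $\OO_{\frak M_f}^2 = \lambda_{-1}(\oplus_j L_{ij}^{m(j)})\OO_{\frak M_f}$; the normal bundle of the stratum is still $\oplus_j L_{ij}$. The multiplicities $m(e)$ enter as \emph{divisor multiplicities} in $\OO(-\sum_f S(f,t,I)\frak M_f)$, and the identity the paper uses, $\OO - \OO(-mD) = \delta(L_e^m)\,\OO_D$ with $\OO_D^2 = (1-L_e)\OO_D$ (exponent $1$), is what places $L_{ij}^{m(j)}$ inside the numerator polynomial $F_{i,\underline a_i}$ while the $\delta$-operator continues to divide by $1 - L_{ij}$. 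A ``uniform substitution $L \rightsquigarrow L^{m}$'' as you propose would also alter the denominators $(1-L_{ij})$ in $\delta$ and yield a genuinely different (incorrect) coefficient. To repair the proof, replace your two cotangent-line claims by: (i) $p^*M_i = (L'_{\iota(i)})^{m(t_i)} + \text{torsion}$ from the stabilization lemma, with $m(t_i)$ the gerbe order; (ii) $L'_{\iota(i)} = L_{\bar\iota(i)}$ by étaleness of the twisted cover; (iii) the divisor-multiplicity identity above for the strata.
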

\begin{proof}
The proof is similar to the point case. We only need to take care of the difference coming from ramification points.
%\Leo{these are not the stacky points -- they should happen at ramification points, not nodes. }

For the power on marked points, note that $p^* M_i = (L'_i)^{m(t_i)} +$ torsion part and $L'_i = L_{\bar \iota (i)}$ since $\tilde C' \rightarrow \tilde C$ is \'etale. 

For the definition of $F_{f,\bar \iota (i)}$, if $D$ is a divisor, note that
\[
\OO_{mD} := \OO - \OO(-mD) =  \OO - (\OO - \OO_D)^m = \delta (L_e^m) \OO_D,
\]
where $L_e$ is characterized by $\OO_D^2 = (1-L_e)\OO_D$. This explains the power in the definition of $F_{f,\bar \iota (i)}$.
\end{proof}

%\YC{Originally I wrote $m(t_i)L_{\bar \iota (i)}$ as the input of $G$. But now I think it should be $L_{\bar \iota (i)}^{m(t_i)}$. Let me know if I am wrong! }

\begin{remark}\label{rmk:psipb}

The cover $\tilde C' \to \tilde C$ is \'etale, so any marked point $i \in \tilde C'$ and its image $s \in \tilde C$ will have the same $\psi$ classes $L'_i = L_s$. Proposition \ref{prop:psipb} writes the pullback $p^* G(M_1, \dots, M_R)$ as a power series in $L'_i$; plugging in $L_s$ for each $L'_i$ in the fiber of $s \in \tilde C$ rewrites this pullback as a power series $H(L_1, \dots, L_n)$. This power series is invariant under $G$, giving an analogous formula on $\KsSymXstar$:
\[\omega^* G(M_1, \dots, M_R) = H^*(L_1, \dots, L_n).\]

\end{remark}

\begin{remark}\label{rmk:pfwdpowerseries}

The map $\phi : \Ks \to \Kb$ is a finite étale map. The $\psi$ classes of $\Ks$ are pulled back from those of $\Kb$. If $H(\vec L)$ is a power series with coefficients $\alpha_I \in K^{\circ}(\Ks)$ in $K$-theory, write 
\[H^\phi(\vec L)\]
for the power series on $\Kb$ with coefficients the pushforwards $\phi_* \alpha_I$ of the coefficients of $H(\vec L)$. If $\beta \in K_\circ(\Kb), \alpha \in K^{\circ}(\Ks)$, the projection formula equates
\[\phi_*(\phi^* \beta \otimes H(\vec L) \otimes \alpha) = \beta \otimes H^\phi(\vec L)\otimes \phi_* \alpha. \]

\end{remark}

\begin{remark}\label{rmk:spelloutHphi}

We sketch how to compute the coefficients $\OO_{\frak M_f}$ in terms of divisors on the moduli space of curves, and then how to deal with those divisors in quantum $K$-theory. This makes the power series $H^\phi(\vec L)$ computable.

A map of covers $f : \gamma \to \eta$ 
%\Leo{Is it a map of graphs or a map of covers}
%\YP{covers}
induces a map $f_* : \frak M_\gamma \to \frak M_\eta$ with image a closed substack $\frak M_f$. The map $f_* : \frak M_\gamma \to \frak M_f$ is a combination of a finite \'etale torsor for the automorphisms $\Aut(f | \eta)$ of the source over the target and a gerbe part for the stacky points. By Proposition \ref{prop:pfwdgerbestrsheaf}, the gerbe part does not affect the pushforward. The pushforward is then related to the regular representation of the sheaf $\Aut(f|\eta)$.

After the above reductions, it remains to explain that quantum $K$-invariants involving the torsion structure sheaf $\OO_S$ supported on boundary strata $S$ can be written in terms of the ``usual'' quantum $K$-invariants. In the cohomological Gromov--Witten theory, this is achieved by the splitting axiom, as the boundary stratum consist of the substacks indexed by the ``dual graphs'' exhibiting the imposed nodes of the general curves. In quantum $K$-theory a parallel splitting axiom is also available, albeit in a more sophisticated form. See \cite{Givental_2000_QK} and the genus reduction and splitting axioms in \cite[\S 4.3]{QK1} for details. One will then have to push these coefficients forward as outlined in \S \ref{ss:pfwdevalclasses}. 
%\Leo{Wonderful! Maybe more precise references for the two papers, but it looks good. }

\end{remark}

%%%%%%%%%%%%%%%%%%%%%%%%%%%%%%%%%%%%%%%%%%%%%%%%%%%%%%%%%%%%%%%%%%%%%%%%%%%%%%%%%%%%%%%%%%%%%%%%%%%%%%%%%%%%%%%%%%%%%%%%%%%%%%%%%%
\section{Main Theorem}

Let $\alpha_1, \dots, \alpha_R \in K^\circ(X)$ be classes and form
\[\alpha = \alpha_1 \boxtimes \cdots \boxtimes \alpha_R = \alpha_1|_{X^R} \otimes \cdots \otimes \alpha_R|_{X^R} \in K^\circ(X^R).\]
Let $G(\vec M)$ a power series in the $\psi$ classes of $\Ms[g, R](X)$ with coefficients arbitrary classes in $K^\circ(\Ms[g, R](X))$. 

Form the Gromov--Witten invariant
\[\chi(\OO^{vir}_{\Ms[g, R](X)} \otimes ev^* \prod \alpha_i G(\vec M)) \qquad \in K_\circ (\pt) = \QQ.\]
Write $H(\vec L)$ for the power series in the $\psi$-classes $L_i$ which is equal to $\omega^*G(\vec M)$ by Remark \ref{rmk:psipb}. Likewise write $H^\phi(\vec L)$ with the power series with coefficients given by the pushforwards of those of $H(\vec L)$ as in Remark \ref{rmk:pfwdpowerseries}.

\begin{theorem}\label{thm:genusgvgenuszero}

Gromov--Witten invariants on $\Ms[g, R](X)$ are equal to $S_k$-invariant Euler characteristics on the space $\Mx$ of stable genus zero maps to $\Sym d X$:
\[
\begin{split}
\chi(\OO^{vir}_{\Ms[g, R](X)} \otimes ev^* \alpha G(\vec M))      
&=\chi(\OO^{vir}_{\Ks} \otimes ev^* \alpha H(\vec L))         \\
&=\chi\left(\OO^{vir}_{\Kb} \otimes \phi_*(ev^* \alpha) H^\phi(\vec L)\right)      \\
&=\chi_{S_k}\left(\OO^{vir}_{\Mx} \otimes \psi^*\left(\phi_*(ev^* \alpha) H^\phi(\vec L)\right)\right)^{S_k}
\end{split}
\]

\end{theorem}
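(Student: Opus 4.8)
The plan is to obtain the three equalities in turn, each from one of the three morphisms in Figure~\ref{fig:zoostacks} and Diagram~\eqref{eqn:bigcostellosquare}: the stabilization $w\colon\Ks\to\Ms[g, R](X)$, the finite \'etale map $\phi\colon\Ks\to\Kb$, and the $S_k$-torsor $\psi\colon\Mx\to\Kb$ (so $\Kb=[\Mx/S_k]$). Each step is a formal manipulation resting on a single structural input: Proposition~\ref{prop:pfwdvfcs} (the Costello-type identity $w_*\OO^{vir}_{\Ks}=\OO^{vir}_{\Ms[g, R](X)}$), the projection formula~\eqref{eqn:projectionfmla} along $\phi$, and the identification of the $K$-theory of the quotient $[\Mx/S_k]$ with $S_k$-equivariant $K$-theory of $\Mx$. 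The $\psi$-class bookkeeping is imported from Proposition~\ref{prop:psipb} and Remarks~\ref{rmk:psipb} and~\ref{rmk:pfwdpowerseries}, and the compatibility of virtual structure sheaves under \'etale pullback from Remark~\ref{rmk:vfcspullback}.

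For the first equality I would begin by recording two compatibilities. Since $w$ is base-changed from $\omega\colon\KpBSdstar\to\Mp[g, R]$ and stabilization of $C'\to X$ contracts only degree-zero components, it does not move the $R$ remembered marked sections and is compatible with evaluation, $ev_{\Ms[g, R](X)}\circ w=ev_{\Ks}$, so $w^*(ev^*\alpha)=ev^*\alpha$; and by Remark~\ref{rmk:psipb}, which unpacks Proposition~\ref{prop:psipb}, pulling the $\psi$-class series back along $w$ gives $w^*G(\vec M)=H(\vec L)$, a power series in the $\psi$ classes at the marked points of $C$ in $J\sqcup\{\infty\}$ whose coefficients are the $w$-pullbacks of those of $G$ together with the boundary classes $\OO_{\frak M_f}$ indexed by maps of covers. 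With these in hand, Proposition~\ref{prop:pfwdvfcs}, the projection formula and the factorization of $\chi$ through $w_*$ give
\[
\begin{aligned}
\chi\bigl(\OO^{vir}_{\Ms[g, R](X)}\otimes ev^*\alpha\,G(\vec M)\bigr)
&=\chi\bigl(w_*\OO^{vir}_{\Ks}\otimes ev^*\alpha\,G(\vec M)\bigr)\\
&=\chi\bigl(\OO^{vir}_{\Ks}\otimes w^*(ev^*\alpha\,G(\vec M))\bigr)
=\chi\bigl(\OO^{vir}_{\Ks}\otimes ev^*\alpha\,H(\vec L)\bigr).
\end{aligned}
\]

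For the second and third equalities I would push forward along $\phi$ and then recognize the $S_k$-equivariant integral. Because $\phi$ is finite \'etale, $\phi^*\OO^{vir}_{\Kb}=\OO^{vir}_{\Ks}$ by Remark~\ref{rmk:vfcspullback}, and the $\psi$ classes entering $H(\vec L)$ sit at marked points of $C$ that are still ordered on $\Kb$, so they are pulled back along $\phi$; applying the projection formula in the form of Remark~\ref{rmk:pfwdpowerseries} (which replaces every coefficient, including $ev^*\alpha$ and the boundary classes $\OO_{\frak M_f}$, by its $\phi$-pushforward, the latter computed via the torsor/gerbe factorization of $f_*\colon\frak M_\gamma\to\frak M_f$ and Proposition~\ref{prop:pfwdgerbestrsheaf} as in Remark~\ref{rmk:spelloutHphi}) together with $\chi=\chi\circ\phi_*$ yields the second line, $\chi(\OO^{vir}_{\Kb}\otimes\phi_*(ev^*\alpha)\,H^\phi(\vec L))$. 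For the last equality I would invoke the cartesian square of \S\ref{ss:permeqvtkthy}, presenting $\Mx=\Kb\times_{BS_k}\pt$ with $\psi$ an $S_k$-torsor, so that pushforward from $\Kb$ to a point factors as $\chi=(-)^{S_k}\circ\chi_{S_k}$ where $\chi_{S_k}(-)=\chi(\Mx,\psi^*(-))$ carries its natural $S_k$-action by flat base change; combined with $\psi^*\OO^{vir}_{\Kb}=\OO^{vir}_{\Mx}$ this gives
\[
\chi\bigl(\OO^{vir}_{\Kb}\otimes\phi_*(ev^*\alpha)\,H^\phi(\vec L)\bigr)
=\chi_{S_k}\bigl(\OO^{vir}_{\Mx}\otimes\psi^*(\phi_*(ev^*\alpha)\,H^\phi(\vec L))\bigr)^{S_k}.
\]

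The hard part is the $\psi$-class comparison $w^*G(\vec M)=H(\vec L)$ in the first step: because $w$ stabilizes $C'\to X$, the classes $M_i$ do not simply pull back, and the boundary corrections of Proposition~\ref{prop:psipb} --- indexed by maps of covers $f\colon\gamma\to\eta$ with ramification multiplicities $m(t_i)$ on their exponents --- must be tracked exactly, which is what forces the whole detour through $\Ks,\Kb,\Mx$ and the series $H$, $H^\phi$. A secondary subtlety is that $ev^*\alpha$ is a genuine class on $\Ks$, not a pullback from $\Kb$, so in the second step the projection formula must be applied only after $ev^*\alpha$ is absorbed into the coefficients of the power series; the notation $\phi_*(ev^*\alpha)\,H^\phi(\vec L)$ abbreviates this combined pushforward. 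Finally, rendering the coefficients $\OO_{\frak M_f}$ explicitly computable in terms of ordinary quantum $K$-invariants, via the genus-reduction and splitting axioms recalled in Remark~\ref{rmk:spelloutHphi}, is where the residual bookkeeping lives.
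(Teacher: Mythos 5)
Your proposal is correct and follows essentially the same route as the paper: the first equality via Proposition~\ref{prop:pfwdvfcs} and the projection formula along $w$ with $H(\vec L)$ defined as the pullback of $G(\vec M)$, the second via $\phi^*\OO^{vir}_{\Kb}=\OO^{vir}_{\Ks}$ and coefficientwise application of the projection formula along the finite \'etale $\phi$ (Remark~\ref{rmk:pfwdpowerseries}), and the third via the cartesian square identifying $\chi$ on $\Kb$ with $(-)^{S_k}\circ\chi_{S_k}$ on $\Mx$. Your added remarks on the $\psi$-class bookkeeping and on deferring the identification of $\phi_*(ev^*\alpha)$ as an evaluation class match what the paper handles in Proposition~\ref{prop:psipb} and Lemma~\ref{lem:pedanticevalmap} respectively.
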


\begin{proof}

For the first equality, apply the projection formula for $w : \Ks \to \Ms[g, R](X)$ and the equality of virtual fundamental classes from Proposition \ref{prop:pfwdvfcs}. The evaluation maps are compatible and the power series $H(\vec L)$ is designed to be the pullback $w^*G(\vec M)$. 

The second equality results from the projection formula and the pullback
\[\phi^*[\OO^{vir}_{\Kb}] = [\OO^{vir}_{\Ks}].\]
The power series $H^\phi(\vec L)$ applies $\phi_*$ to the coefficients, so we are using the projection formula for each monomial of $H(\vec L)$ one at a time. 

The third equality results from the $S_k$-quotient $\psi : \Mx \to \Kb$ as in \S \ref{ss:permeqvtkthy}. The pullback
\[
\begin{tikzcd}
\Mx \ar[r] \ar[d, "\psi", swap] \pb         &\pt \ar[d]       \\
\Kb \ar[r]         &BS_k
\end{tikzcd}    
\]
equates the underlying vector space of $\chi_{S_k}(V)$ with the pullback $\chi(\psi^* V)$. The pushforward map $BS_k \to \pt$ then takes the quotient $(-)^{S_k}$ by $S_k$. 

\end{proof}

We have reduced Gromov--Witten invariants on $\Ms[g, R](X)$ to some equivariant Euler characteristics on $\Mx$. But are these Euler characteristics actually Gromov--Witten invariants?

\begin{lemma}\label{lem:pedanticevalmap}

The equivariant Euler characteristic $\chi\left(\OO^{vir}_{\Kb} \otimes \phi_*(ev^* \prod \alpha_i) H^\phi(\vec L)\right)$ is a ``Gromov--Witten invariant'' in genus zero. In other words, the class $\phi_* ev^* \prod \alpha_i$ can be described as the pullback of a class via the evaluation map of $\Kb$. 

\end{lemma}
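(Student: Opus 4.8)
The plan is to trace the class $\phi_*(ev^*\prod\alpha_i)$ through the diagram in Figure \ref{fig:zoostacks} and identify it with the pullback of a single class along the evaluation map $ev_{C,\bullet} : \Kb \to \left(\instackbar{\Sym d X}\right)^R$. The starting point is that on $\Kt$ (or $\Ks$) there are two evaluation maps: one for $C'$, landing in $(\instackbar X)^{\text{(many points)}}$ — or simply in $X^{\#(\num\ell\setminus A)}$ at the unramified remembered points — and one for $C$, landing in $\left(\instackbar{\Sym d X}\right)^R$. Genuine Gromov--Witten invariants use the latter, so the content of the lemma is that pushing forward the $C'$-evaluation class along $\phi$ produces a $C$-evaluation class. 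I would organize this around the two kinds of remembered marked points separately, following the partition $\num\ell\setminus A = I\sqcup J$.

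First I would handle the points of $J$. Over a point of $J\subseteq C$ the cover $C'\to C$ is unramified of degree $d$, so the fiber is $d$ honest points of $C'$ and the remembered one among them (recall $J\times\num{g+1}\setminus A\to J$ is a bijection) carries a single $\alpha_j\in K^\circ(X)$. The map $\Ks\to\Kb$ is, over the $J$-part, the composite of the $d^{\#J}$-sheeted covers recording which sheet is remembered; pushing $ev^*\alpha_j$ forward along the relevant $d:1$ cover replaces $\alpha_j$ by its image under the induced map $X\to\Sym d X$, i.e. the class whose pullback to $C'$ along all $d$ sheets is symmetrized. Concretely, by Remark \ref{rmk:iteratedsym} and the discussion of evaluation maps in \S\ref{ss:evalmaps}, the $d$-sheeted cover over the $j$th marked point of $C$ is exactly the universal cover of $\Sym d X$-maps, so $\phi_*ev^*_{C',j}\alpha_j = ev^*_{C,j}(\pi_*^{univ}\alpha_j)$ where $\pi^{univ}:[(X^d\times\num d)/S_d]\to\Sym d X$ is the universal $d$-sheeted cover — and this pushforward is a well-defined class in $K^\circ$ since $\pi^{univ}$ is finite étale (pushforward on $K^\circ$ exists for finite étale maps, as recalled in \S\ref{ss:K-theory}). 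The projection formula for $\phi$ as in Remark \ref{rmk:pfwdpowerseries} then moves this out of the integral. The point of $\infty$ with fiber $I$ is the one place where we genuinely pull back from $\instackbar{\Sym d X}$ rather than from $\Sym d X$: the relevant part of $\Ks\to\Kb$ is the $S_{\#I}$-quotient, and pushing forward the box product of the $\alpha_i$, $i\in I$, along $B S_{\#I}\to\pt$-type quotients gives (by Proposition \ref{prop:pfwdgerbestrsheaf} and Example \ref{ex:pfwdtrivialgerbe}) the $S_{\#I}$-invariant part, which is exactly a class on the appropriate component of $\instackbar{\Sym d X}$ built from $\Sym{\#I}X$ (or $\Sym k X$ for $k\le g+1$ as flagged in the introduction) — this is the single class "pulled back from $\instackbar{-}$" that the introduction promised we would need. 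I would assemble these component-wise identifications into one class on $\left(\instackbar{\Sym d X}\right)^R$ and check its $ev_{C,\bullet}$-pullback equals $\phi_*ev^*_{C'}\prod\alpha_i$ by comparing on the cartesian pieces of \eqref{eqn:bigcostellosquare}.

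The main obstacle, I expect, is the bookkeeping at $\infty$: one must correctly identify which connected component of the rigidified cyclotomic inertia stack $\instackbar{\Sym d X}$ is hit — this is governed by the ramification data $\gamma:I\to\ZZ_{\ge1}$ and the map $\phi:\bigsqcup B\mu_2\to BS_d$ in $\Xi$ — and verify that the $S_{\#I}$-quotient (together with the nontrivial $\mu_{r_\infty}$-gerbe, which by Proposition \ref{prop:pfwdgerbestrsheaf} contributes trivially to pushforward over $\CC$) really does land the symmetrized $\boxtimes_{i\in I}\alpha_i$ in $K^\circ$ of that component and not merely in $K_\circ$. Everything else — the $J$-points and the unramified-sheet covers — is a routine application of the projection formula and finite-étale pushforward, so the writeup would spend most of its space on the $\infty$-fiber, invoking the worked example in \S\ref{ss:mapc} for the explicit identification of the inertia component.
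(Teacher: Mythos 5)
Your proposal follows essentially the same route as the paper's proof: decompose the remembered points into $J$ and $I$, realize $\Ks\to\Kb$ as a fiber product of the $d$-sheeted covers $\cal Q_j$ and the $S_{\#I}$-torsor $\cal P$, push the insertions forward along the universal finite \'etale covers $X\times\Sym{d-1}{X}\to\Sym d X$ and $X^{\#I}\to\Sym{\#I}{X}$ using base change on the cartesian square \eqref{eqn:pbevalmaps}, and then identify the resulting class as a pullback along the evaluation map of $\Kb$ via the component map $c:\instackbar{\Sym d X}\to\bigsqcup_{k\le d}\Sym k X$ of \S\ref{ss:mapc}. The only slip is describing the pushforward along the $S_{\#I}$-\emph{torsor} $\cal P\to\Kb$ as ``taking invariants'' in the style of Example \ref{ex:pfwdtrivialgerbe}, which concerns gerbes $BG\to\pt$ rather than torsors; this does not affect the argument, which only needs finite \'etale pushforward in $K^\circ$ and compatibility of pushforward with pullback.
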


We spend the rest of the section proving Lemma \ref{lem:pedanticevalmap}. This lemma was essentially left to the reader in \cite{costello}, although it is simpler in Chow groups than in $K$-theory. Reducing the part $H^\phi(\vec L)$ is left to the reader, following Remark \ref{rmk:spelloutHphi} and the process we outline for the evaluation classes.

\subsection{Turning equivariant Euler characteristics on $\Mx$ into proper Gromov--Witten invariants}\label{ss:pfwdevalclasses}

We need to show $\phi_* ev^* \alpha$ is pulled back from the evaluation map on $\Kb$. We first show it is pulled back from a natural map $\Kb \to (\Sym d X)^J \times \Sym{\#I}{X}$.

In the covers $C'\to C$ parameterized by $\Kb$, none of the marked points of $C'$ are ordered. Write $\cal Q_j \to \Kb$ for $j \in J$ for the $d$-sheeted cover of preimages of $j \in C$ in $C'$. Likewise, let $\cal P \to \Kb$ be the $S_{\#I}$-torsor ordering the preimages in $C'$ of $\infty \in C$. The product over $\Kb$ of all these $d$-sheeted covers and the $S_{\#I}$-torsor is $\cal K^*$:
\[\Ks = \prod_{\Kb} \cal Q_j \times_{\Kb} \cal P \to \Kb.\]

Considering the map $\cal Q_j \to X$ as a $d$-sheeted cover of $\bar{\cal K}$, it is parameterized by a map to $\Sym d X$
\[
\begin{tikzcd}
\cal Q_j \ar[r] \ar[d] \pb       &X \times \Sym{d-1}{X} \ar[d, "w_j"]  \\
\Kb \ar[r]   &\Sym d X.
\end{tikzcd}
\]
The equivariant map $\cal P \to X^I$ is also parameterized by a map to a symmetric product stack, but with different total space
\[
\begin{tikzcd}
\cal P \ar[r] \ar[d] \pb      &X^{\#I} \ar[d, "w_I"]   \\
\Kb \ar[r]    &\Sym {\#I} X.
\end{tikzcd}
\]

There is then a pullback square
\begin{equation}\label{eqn:pbevalmaps}
\begin{tikzcd}
\Ks \ar[r] \ar[d, "\phi"] \pb        &\prod \cal Q_j \times \cal P \ar[r] \ar[d] \pb       &X^J \times \left(\Sym {d-1} X\right)^J \times X^{\# I} \ar[d, "w"] \ar[r]       &X^R        \\
\Kb \ar[r, "\Delta"]    &\Kb^R \ar[r]     &\left(\Sym d X\right)^J \times \Sym {\#I} X. 
\end{tikzcd}
\end{equation}

Take classes $\alpha_1, \cdots, \alpha_R \in K^\circ(X)$. Write 
\[\alpha \coloneqq \alpha_1 \boxtimes \cdots \boxtimes \alpha_R\]
for the tensor product of the pullback of these classes to $X^J \times \left(\Sym {d-1} X\right)^J \times X^I$. Pullback and pushforward in $K^\circ$ theory along cartesian squares commute \eqref{eqn:projectionfmla}, so the resulting classes on $\Kb$ are the same
\begin{equation}\label{eqn:pfwdpbcompatible}
\phi_* (\alpha|_{\Ks}) = (w_* \alpha)|_{\Kb} \qquad \in K^\circ(\Kb).     
\end{equation}

\subsection{Evaluation maps}\label{ss:evalmaps}

We have shown $\phi_* ev^* \alpha$ is pulled back from $\Kb \to (\Sym d X)^J \times \Sym{\#I}{X}$. We need to compare this with the natural evaluation map on $\Kb$.

%\Leo{Sync with earlier mention of GWI's. }

Gromov--Witten invariants are certain integrals of $K$ theoretic classes pulled back from the evaluation maps as defined in Section \S \ref{ss:gwkthyforstacks}. To make sense of this, we need to be pedantic about the correct evaluation map for each target stack.

For spaces parameterizing multiple curves $C_1, C_2$ such as Hurwitz stacks, there is more than one evaluation map. We default to the evaluation maps of the base curve of the cover to be correct. 

\begin{example}

The space $\Mx$ parameterizes ramified covers $C'\to C$ together with a map $C'\to X$. There are $n$ ordered marked points of $C$ and $\ell$ unordered points of $C'$. The resulting evaluation map is
\[ev : \Mx \to \instackbar{\Sym d X}^n.\]

\end{example}

\begin{example}

The space $\Kb$ is similar to $\Mx$ but harder, because the $k$ gerbey points on $C$ are not even ordered. That part of the evaluation map lands in a symmetric stack of a symmetric stack
\[ev : \Kb \to \instackbar{
\left(\Sym d X\right)^J \times \Sym k {\Sym d X} \times \Sym {d} X
}.\]
The evaluation maps fit in a commutative square
\[
\begin{tikzcd}
\Mx \ar[r, "ev"] \ar[d]      &\instackbar{\left(\Sym d X\right)^J \times (\Sym d X)^k \times \Sym {d} X} \ar[d]   \\
\Kb   \ar[r, "ev"]      &\instackbar{\left(\Sym d X\right)^J \times \Sym k {\Sym d X} \times \Sym {d} X}.
\end{tikzcd}
\]

We won't need classes on the middle factor $\instackbar{\Sym k {\Sym d X}}$. Write $ev'$ for the projection away from this factor on the evaluation map of $\Kb$
\[ev': \Kb \to \instackbar{\left(\Sym d X\right)^J \times \Sym {d} X}\]

\end{example}

\begin{remark}

The 2-functor $\instackbar{-}$ does not distribute over products because of the representability requirement. For example, the identity map on $B \ZZ/2 \times B \ZZ/2$ is representable, but it doesn't factor through a representable map to either factor. It is more accurate to say the evaluation map lands in the product of $\instackbar{-}$ applied to each factor, so it is a product of the evaluation maps for each marked point. 

\end{remark}

Our ramification points are $\mu_r$-banded gerbes mapping to $BS_d$. Given a map $B\mu_r \to BS_d$, we can extract the set theoretic fiber of the stacky point of $\tilde C' \to \tilde C$ by taking the set-theoretic quotient $\num{d}/\mu_r$ of the corresponding action as in Remark \ref{rmk:mapsofgerbes}. 

More generally, we have a $\mu_r$-gerbe $\cal G \to T$ with a map $\cal G \to \Sym d X$. This means a finite étale cover $\tilde{\cal G} \to \cal G$ of degree $d$. The coarse moduli space of $\cal G$ is $T$, and that of $\tilde{\cal G}$ is a finite étale cover $T'\to T$. The degree $k$ of this cover is some number less than $d$. The map $\tilde {\cal G} \to X$ factors through $T'$ because $X$ is a scheme. This procedure gives a map
\begin{equation}\label{eqn:cmap}
c : \instackbar{\Sym d X} \to \bigsqcup_{k \leq d} \Sym k X,
\end{equation}
sending $\cal G \to \Sym d X$ to $T'\to X$. We describe this map in detail in the next Section \S \ref{ss:mapc}.

Because we fixed the discrete data $\Xi$ for $\Kb$, we know which component of each factor of $\bigsqcup_{k \leq d} \Sym k X$ it maps to
\[\begin{tikzcd}
\Kb \ar[d, "ev'", swap] \ar[r]         &(\Sym d X)^J \times \Sym{\# I}{X} \ar[d, "\inc"]      \\
\instackbar{\left(\Sym d X\right)^J \times \Sym {d} X}  \ar[r, "c"]        &\left(\bigsqcup_{k} \Sym k X\right)^{J \cup \infty}
\end{tikzcd}
\]
Let $\beta$ be a class in $K^\circ\left((\Sym d X)^J \times \Sym{\# I}{X}\right)$. Because the map $\inc$ is an inclusion of components, 
\[\inc^* \inc_* \beta = \beta.\]
Then the pullback $\beta|_{\Kb}$ is the same as the class
\[ev'^* c^* \inc_* \beta \qquad \in K^\circ(\Kb). \]

\begin{proof}[Proof of Lemma \ref{lem:pedanticevalmap}]

Take $\beta = w_* \alpha$ above. Then
\[\phi_*(\alpha|_{\Ks}) = (w_* \alpha)|_{\Kb} = {ev'}^*c^* \inc_* w_* \alpha,\]
using \eqref{eqn:pfwdpbcompatible} and the discussion immediately above. This expresses the factor $\phi_*(\alpha|_{\Ks})$ in the Gromov--Witten invariant as a class pulled back via the evaluation map on $\Kb$.

\end{proof}

\subsection{Describing the map $c$}\label{ss:mapc}

We describe the map $c$ in detail using the inertia stack $I(\Sym d X)$. This section is purely expository, and an example is given at the end.

The inertia stack of a quotient stack $\bra{Y/G}$ is the disjoint union of the quotients
\[I (\bra{Y/G}) = \bigsqcup_{g \in G \text{ conj classes}} \bra{Y^g/C_g}\]
of the fixed locus $Y^g$ by the centralizer $C_g \subseteq G$.

For $\Sym d X$, $Y = X^d$ and $G = S_d$. Conjugacy classes of $S_d$ are indexed by cycle types, the multiset of lengths of cycles. For example,
\[g_1 \coloneqq (12)(34)(5) \in S_5 \mapsto \{2, 2, 1\}\]
\[g_2 \coloneqq (234)(761)(5)(89) \in S_9 \mapsto \{3, 3, 1, 2\}.\]

Let $N_s$ be the number of cycles of length $s$, $N \coloneqq \sum N_s$ the total number of cycles, and $t$ the cardinality of the set of distinct lengths in the cycle type. 
We include all cycles of length one $N_1$, so $\sum i N_i = d$. 
For $g_1, g_2$ above,
\[g_1 \mapsto N_1 = 1, N_2 = 2, t = 2\]
\[g_2 \mapsto N_1 = 1, N_2 = 1, N_3 = 2, t = 3.\]
The centralizer of a cycle type $g$ is
\[C_g = S_{N_1}  \times S_{N_2} \rtimes (\ZZ/2)^{N_2}   \times S_{N_3} \rtimes (\ZZ/3)^{N_3} \cdots  \times S_{N_t} \rtimes (\ZZ/t)^{N_t} . \]
%by \cite{3588666}. 
The fixed locus $(X^d)^g \subseteq X^d$ is the multidiagonal
\[(X^d)^g = \{(x_1, \cdots, x_d) \, | \, x_i = x_j \text{ if }i, j \text{ in the same cycle}\},\]
so $(X^d)^g \cong X^N$. For $g_1, g_2$, we have 
\[(X^d)^{g_1} = \{(x, x, y, y, z)\} \in X^5\]
\[(X^d)^{g_2} = \{(x, x, x, y, y, y, z, w, w)\} \subseteq X^9.\]

Write $H_i = S_{N_i} \rtimes (\ZZ/i)^{N_i} = \ZZ/i \wr S_{N_i}$. There is an exact sequence
\[0 \to (\ZZ/i)^{N_i} \to H_i \to S_{N_i} \to 1\]
and a splitting $S_{N_i} \subseteq H_i$. The subgroups $\ZZ/i$ act trivially on the diagonal fixed locus $X \subseteq X^{N_i}$, so the stack quotient is a trivial gerbe
\[\bra{X/(\ZZ/i)} = X \times B \ZZ/i.\]
The quotient $X^N/\prod H_i$ is the product of symmetric products
\[X^N/\prod H_i = \prod_i \Sym {N_i} {\bra{X/(\ZZ/i)}} = \prod_i \Sym {N_i} {(X \times B\ZZ/i)}.\]

On each component, there is a map to a single symmetric product
\[
\prod \Sym {N_i} {(X \times B\ZZ/i)} \to \prod \Sym {N_i} X \overset{c'}{\longrightarrow} \Sym N X.
\]
The second map $c'$ takes $t$ covers $T'_i \to T$ of degrees $N_i$ and assembles them into one cover $T'= \bigsqcup T'_i \to T$ of degree $N$. This gives a map
\begin{equation}\label{eqn:rigidifiedcbar}
\bar c : I(\Sym d X) \to \bigsqcup_{N \leq d} \Sym N X.    
\end{equation}
The reader can check this coincides with the map $c$ defined in \eqref{eqn:cmap}. 

\begin{lemma}

There is a commutative diagram
\[
\begin{tikzcd}
I(\Sym d X) \ar[dr, "\bar c", bend left=15] \ar[d]               \\
\instackbar{\Sym d X}, \ar[r, "c", swap]      &\bigsqcup_{k \leq d} \Sym k X
\end{tikzcd}
\]
where $c$ is the map \eqref{eqn:cmap} and $\bar c$ is \eqref{eqn:rigidifiedcbar}. 
    
\end{lemma}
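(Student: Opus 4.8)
The plan is to verify the triangle on each component of the conjugacy-class decomposition $I(\Sym d X)=\bigsqcup_{[g]}\bra{(X^d)^g/C_g}$ recalled above, checking that both $c$ and $\bar c$ send such a component to the same degree-$N$ cover of $X$: the one with one sheet for each cycle of $g$, the sheet of a cycle landing at the common (diagonal) value in $X$ of the coordinates indexed by that cycle. Since $c$ of \eqref{eqn:cmap}, $\bar c$ of \eqref{eqn:rigidifiedcbar}, and the natural map $I(\Sym d X)\to\instackbar{\Sym d X}$ are all compatible with base change, it then suffices to check that the two resulting natural transformations $I(\Sym d X)\to\bigsqcup_{k\le d}\Sym k X$ agree over the universal families, which reduces the comparison to bookkeeping.

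First I would make the map $I(\Sym d X)\to\instackbar{\Sym d X}$ explicit on the component indexed by a cycle type with invariants $N_i,N$. Fix a representative $g\in S_d$, of order $r=\lcm{i:N_i\neq 0}$. Using $\Sym d X=[(X^d\times\num d)/S_d]$, an object of $\instackbar{\Sym d X}$ over $T$ is a $\mu_r$-gerbe $\cal G\to T$ with a representable map $\cal G\to\Sym d X$, i.e. a degree-$d$ \'etale cover $\tilde{\cal G}\to\cal G$ together with a map $\tilde{\cal G}\to X$ (Remark \ref{rmk:mapsofgerbes}). On $\bra{(X^d)^g/C_g}=\prod_i\Sym{N_i}{(X\times B\ZZ/i)}$ this cover is the one built from the $\mu_r$-action on $\num d$ conjugate to $\langle g\rangle$ together with the universal $(X^d)^g\cong X^N$-worth of diagonal values; I would write it out keeping track of the splitting $S_{N_i}\rtimes(\ZZ/i)^{N_i}=H_i$ of the centralizer.

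Next I would apply $c$ and compare with $\bar c$. By construction $c$ returns the relative coarse moduli space $T'\to T$ of $\tilde{\cal G}$, with its induced map $T'\to X$ (which factors through $T'$ since $X$ is a scheme). The \'etale cover $\tilde{\cal G}=[\num d/\mu_r]$ decomposes over the $\langle g\rangle$-orbits of $\num d$, i.e. over the cycles of $g$, and each orbit is collapsed to a single point of the coarse space; hence $T'\to T$ has degree $N$, and the sheet attached to a cycle maps to the value $x_j\in X$ shared by the coordinates $j$ in that cycle. On the other side, $\bar c$ on $\prod_i\Sym{N_i}{(X\times B\ZZ/i)}$ is the composite of the gerbe-forgetting map $\prod_i\Sym{N_i}{(X\times B\ZZ/i)}\to\prod_i\Sym{N_i}X$ with the assembling map $c'$ above, which glues the $t$ tautological covers, of degrees $N_1,\dots,N_t$, into a single degree-$N$ cover $T'\to T$ with a map to $X$. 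Over a point, the $N_i$ sheets of the tautological cover of $\Sym{N_i}X$ are indexed by the $N_i$ length-$i$ cycles of $g$, each mapping to that cycle's diagonal value; assembling over $i$ produces the degree-$N$ cover with one sheet per cycle of $g$, each landing at its diagonal value, which is manifestly the same cover-of-$X$ produced by $c$. Summing over conjugacy classes gives $c\circ(\text{natural map})=\bar c$.

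The main obstacle will be upgrading the previous paragraph from a statement about points to one about families: one must track, uniformly in $T$, the interplay between the (possibly nontrivial) $\mu_r$-gerbe $\cal G$, the relative-coarse-moduli functor, and the orbit combinatorics, checking in effect that coarse-ifying $\tilde{\cal G}$ over $T$ collapses precisely the sub-gerbe lying over each $\langle g\rangle$-orbit. One also has the routine verifications that nothing depends on the chosen representative $g$ of its conjugacy class (which only relabels $\num d$) nor on the identification $\mu_r\simeq\ZZ/r$ over $\CC$ used to pass between $I(\Sym d X)$ and the cyclotomic inertia of $\Sym d X$.
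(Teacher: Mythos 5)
Your proposal is correct and follows essentially the same route the paper takes: the lemma is stated without a formal proof (``the reader can check this coincides with the map $c$''), and the surrounding text gives exactly the cover-theoretic argument you outline --- decomposing $I(\Sym d X)$ by cycle type, identifying a $T$-point with a $d$-sheeted cover $P \to T \times B\mu_r$, and matching the coarse space of $P$ (one sheet per $\langle g\rangle$-orbit, landing at that cycle's diagonal value in $X$) with the degree-$N$ cover assembled by $\bar c$. Your explicit flagging of the pointwise-to-families upgrade and the independence of the choices of representative $g$ and of $\mu_r\simeq\ZZ/r$ addresses precisely what the paper leaves implicit.
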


We explain the above from the point of view of covers. A $T$-point of $I(\Sym d X)$ is a map $T \times B \mu_r \to \Sym d X$, which is a $d$-sheeted cover $P \to T \times B \mu_r$. Write $P_0$ for the pullback $d$-sheeted cover of $T$
\[
\begin{tikzcd}
P_0  \ar[r] \ar[d] \pb       &P \ar[d]      \\
T \ar[r]       &T \times B \mu_r,
\end{tikzcd}
\]
so $P = \bra{P_0/\mu_r}$. 

Fix a generator $\ZZ/r \simeq \mu_r$. If $T$ is a geometric point, $P_0 \simeq \num{d}$ and $\mu_r \action P_0$ is an element $\sigma \in S_d$. Its order is the ${\rm lcm}$ of the cycle lengths, which divides $r$; $r$ is equal to the ${\rm lcm}$ when the map $T \times B \mu_r \to \Sym d X$ is representable. Reordering $P_0 \simeq \num{d}$ conjugates $\sigma$, so $\sigma$ is well defined as a conjugacy class. 

Even if $T$ is not a geometric point, this defines a locally constant function
\[T \mapsto S_d/_{ad} S_d; \qquad t \mapsto [\sigma]\]
from $T$ to the conjugacy classes of $S_d$. This decomposes $I(\Sym d X)$ into components corresponding to cycle type, or partitions of $d$.

Given a partition $d = \sum i N_i$ that is the cycle type of $\sigma \in S_d$, a point $T \to I(\Sym d X)$ factors through the corresponding component if its fibers at geometric points are isomorphic to $\num{d}/\sigma$. The $N_i$ different orbits of $i$ points may be interchanged in families over $T$, and each such orbit may vary in a $B\ZZ/i$ family. The collection of orbits of $i$ points is parameterized by the stack
\[\Sym {N_i} {(B\ZZ/i)}.\]
We have $\prod \Sym {N_i} {(X \times B\ZZ/i)}$ instead because we also need a map to $X$.

\subsubsection{The case $d = 5$}

If $d = 5$, the seven cycle types/partitions are 
\[5, 4 + 1, 3 + 2, 3 + 1 + 1, 2 + 2 + 1, 2 + 1 + 1 + 1, 1 + 1 + 1 + 1 + 1.\]

The corresponding components of $I(\Sym 5 X)$ are
\[X \times B\ZZ/5, \quad X^2 \times B\ZZ/4, \quad  X^2 \times B\ZZ/3 \times B\ZZ/2, \quad 
X/(\ZZ/3) \times X^2/S_2,
\]
\[ X \times X^2/(S_2 \rtimes (\ZZ/2)^2),  \quad X/\ZZ/2 \times X^3/S_3,  \quad X^5/S_5.\]
Project away from the cyclic gerbes $B \ZZ/i$
\[X,  \quad X^2,  \quad X^2, \quad  X \times \Sym 2 X,  \]
\[\Sym 2 X \times X,  \quad X \times \Sym 3 X,  \quad \Sym 5 X.\]

%\YP{Why $B\mu_4$? Why not $B\mu_2$ or something else?}
For example, consider a trivial gerbe $b : B\mu_4 \to \Sym 5 X$ mapping to the symmetric product. This corresponds to a 5-sheeted cover $P \to B \mu_4$, counting stacky multiplicity. 

The composite $\pt \to B \mu_4 \to \Sym 5 X \to BS_5$ parameterizes a 5-sheeted cover of the point, which we trivialize and view as $\num{5}$. The action of $\mu_4$ on $\num{5}$ can be viewed as a map $\mu_4 \to S_5$. Choose a generator to identify $\mu_4 \simeq \ZZ/4$ and let $\sigma \in S_5$ be the image of the generator. 

The element $\sigma$ has order dividing 4. Take for example
\[\sigma = g_1 = (12)(34)(5).\]
Its order $2$ is not $4$, so the classifying map $b$ is not representable.

The stack quotient of $\num{5}$ by $\ZZ/4 \cdot \sigma$ is a disjoint union
\[B\mu_2 \sqcup B \mu_2 \sqcup B \mu_4.\]
This is the 5-sheeted cover $P \to B\mu_4$. The corresponding point $\pt \to I(\Sym 5 X)$ factors through the component of the partition $2 + 2 + 1$, i.e. $X \times X^2/(S_2 \rtimes (\ZZ/2)^2)$ above.

The cover comes with a map $P \to X$. The map $\bar c$ takes coarse moduli spaces of $P \to B \mu_4$, obtaining $\num{3} \to \pt$. The map $P \to X$ factors through $\num{3}$ because $X$ is a scheme with no stack structure. This sends the component $X \times X^2/(S_2 \rtimes (\ZZ/2)^2)$ to $\Sym 3 X$.

\section{Elliptic curves example}

\begin{figure}
    \centering
\begin{tikzpicture}
\node at (-1, .75){$C'$};
\draw[->] (-1, .45) to (-1, -1.25);
\node at (-1, -1.5){$C$};
\draw[-] (0.75, 0) to (1.5, 0);
\draw[-] (0.75, 1) to (1.5, 1);
\draw (0.75, 1) arc (90:270:.5);
\draw (1.5, 0) arc (-90:90:.5);
\draw[-] (3.5, 0) to (5, 0);
\draw[-] (3.5, 1) to (5, 1);
\draw (3.5, 1) arc (90:270:.5);
\draw (6, 0) arc (-90:90:.5);
\draw[-] (0, -1.5) to (5, -1.5);
\node at (5.5, -1.5){$\cdots$};
\draw[-] (6, -1.5) to (7, -1.5);
\fill (.25, .5) circle (.07cm);
\fill (2, .5) circle (.07cm);
\fill (3, .5) circle (.07cm);
\fill[white] (6.5, .5) circle (.07cm);
\draw (6.5, .5) circle (.07cm);
\node at (5.5, 0){$\cdots$};
\node at (5.5, 1){$\cdots$};
%
%
%nodes on base for J
\fill (0.25, -1.5) circle (.07cm);
\fill (2, -1.5) circle (.07cm);
%nodes on source for J
%
%nodes on base for k
\fill (3, -1.5) circle (.07cm);
\node[below] at (2, -2){$k$};
%node on base for \infty
\fill[white] (6.5, -1.5) circle (.07cm);
\draw (6.5, -1.5) circle (.07cm);
\node[below] at (6.5, -2){$\infty$};
\end{tikzpicture}
    \caption{An elliptic curve $C'$ and its double cover of $C = \PP^1$ simply ramified at four points: $\Xi$ with $g = 1$, $d = 2$, $k = 0$. Marked points are colored white if remembered and black if forgotten under the map $p : \KpBSd \to \Mp[1, 1]$. }

\end{figure}

We apply our theorem in the case of elliptic curves, with $g = R = 1$, $d=2$, and $X = \pt$. The symmetric product is the classifying stack $\Sym d X = BS_d$. The results are reassuring but not surprising.

We know the map $w$ is proper and birational. For elliptic curves, more is true. 

\begin{lemma}

The map $w : \KsSymptstar \to \Ms[1, 1]$ is an isomorphism.

\end{lemma}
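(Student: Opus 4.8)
The content of the lemma is a stacky refinement of the classical description of $\Ms[1,1]$ as a moduli of double covers of $\PP^1$: forgetting the twisted structure, $\KsSymptstar$ parametrises a genus-zero curve with one distinguished and three unordered points (the branch data of a double cover with genus-one total space), i.e.\ it maps to $\overline M_{0,4}/S_3$, and $w$ should be the double-cover isomorphism. Let me first record the setup. With $X = \pt$ and $\Xi$ given by $g = 1$, $d = 2$, $J = \varnothing$, $\#I = 1$ — so $k = 3$, $R = 1$, and $\KsSymptstar$ coincides with $\bar{\mathcal K}$ by Remark~\ref{rmk:phiisomonepoint} — an object of $\KsSymptstar$ over $S$ is a balanced admissible double cover $\tilde C' \to \tilde C$, with $\tilde C$ a genus-zero twisted curve carrying four $\mu_2$-banded markings (one distinguished, $\infty$, and three unordered, $\num k$, with the minimal root-stack structure along the branch locus) and $\tilde C'$ of genus one (there being no further data since $X = \pt$); the map $w$ forgets the three markings over $\num k$, stabilises, and returns the coarse curve of $\tilde C'$ with its marked point over $\infty$. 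Note first that since $\Ms[1,1]$ is open in $\Mp[1,1]$, the cartesian square \eqref{eqn:bigcostellosquare} presents $w$ as the restriction of the map $\omega$ of Lemma~\ref{lem:propbirational}, so $w$ is proper and birational; by Remark~\ref{rmk:puredegneqbirational} (compare the root stack of $\PP^1$) this is not yet enough, so I would exhibit the inverse.

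The plan is to build $u : \Ms[1,1] \to \KsSymptstar$ and check $w \circ u \simeq \mathrm{id}$ and $u \circ w \simeq \mathrm{id}$. Over a family of smooth $1$-pointed genus-one curves $(E \to S, p)$, let $\iota := [-1]$ be multiplication by $-1$ for the group law with origin $p$ — a canonical automorphism of $(E,p)$, the unique nontrivial involution fixing $p$ on a general fibre — set $C := E/\iota$, and take the double cover $E \to C$, whose branch divisor is $p' + B_0$ with $p'$ the section image of $p$ and $B_0$ a relative divisor of degree $3$ (generically the images of the nonzero $2$-torsion) that need not split into sections. Let $\tilde C$ be the $\mu_2$-root stack of $C$ along $p'$, along $B_0$, and balanced at the nodes; then $\tilde C' := \tilde C \times_C E$ is étale over $\tilde C$, i.e.\ a representable map $\tilde C \to BS_2$, and marking $\tilde C$ at $p'$ (as $\infty$) and at $B_0$ (the unordered $\num k$) yields an object of $\KsSymptstar(S)$ over $(E,p)$; over the nodal locus of $\Ms[1,1]$ one replaces $E \to E/\iota$ by the stable admissible cover of $\PP^1 \vee \PP^1$ by the "banana" of two $\PP^1$'s, which stabilises back to the nodal cubic, to extend $u$. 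Then $w \circ u \simeq \mathrm{id}$ is built into the construction. For $u \circ w \simeq \mathrm{id}$, I would pass from $\tilde C' \to \tilde C$ to its coarse admissible cover $C' \to C$ with deck involution $\tau$, form $((C')^{st}, q)$, and use that $\tau$ is the unique nontrivial involution of $C'$ fixing $q$, hence descends to the canonical $[-1]$ of $((C')^{st}, q)$ — a property preserved by stabilisation — so that running the construction on $((C')^{st}, q)$ recovers $\tilde C' \to \tilde C$, the root-stack structure being canonically the minimal one. Finally, automorphisms match under $w$: by the standard identification of twisted curves with a representable map to $BS_2$ with admissible double covers, $\operatorname{Aut}(\tilde C \to BS_2) = \operatorname{Aut}(C' \to C) = \operatorname{Aut}((C')^{st}, q)$ — the last equality since every automorphism of $((C')^{st},q)$ commutes with the central involution $[-1]$ — and $w$ identifies these groups. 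Hence $u$ and $w$ are mutually inverse isomorphisms of stacks. (Equivalently, one could argue $w$ is proper, representable, and quasi-finite of degree one, hence finite flat of degree one, hence an isomorphism.)

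The step I expect to be the main obstacle is the boundary of $\Ms[1,1]$ — the single point given by the nodal cubic with its smooth marked point — and, relatedly, the behaviour of the whole construction in families: that $\iota$ extends over the nodal locus, that $E \to E/\iota$ becomes there a \emph{balanced} admissible cover of $\PP^1 \vee \PP^1$ by the banana curve, that the degree-$3$ divisor $B_0$ degenerates compatibly (two branch points on one rational component, one on the other) so that the three $\num k$-markings are accounted for, and that this cover stabilises back to the given nodal cubic, so that $u$ and $w$ really are inverse there. These are standard facts about admissible covers and twisted curves, but they carry the bookkeeping. It is worth emphasising that the stacky structure at the four branch points is what makes the statement true on the nose: without it one recovers only $\overline M_{0,4}/S_3 \cong \PP(2,3)$, which has the same coarse space as $\Ms[1,1]$ but is not isomorphic to it (it does not see the elliptic involution), and the deck involution of $\tilde C' \to \tilde C$, together with the $\mu_2$-gerbes it puts at the branch points, is precisely what supplies the missing automorphism.
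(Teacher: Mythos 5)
Your route is genuinely different from the paper's. The paper does not construct an inverse: it takes the isomorphism of coarse spaces ($\PP^1 \to \PP^1$, the $j$-line) as given by Lemma~\ref{lem:propbirational} and then verifies fibrewise that $\Aut(C'\to C) \to \Aut(D)$ is an isomorphism, case by case --- generic $j$ (already done), $j=1728$, $j=0$, and the nodal cubic, where it analyses the banana cover and identifies $\Aut(C'\to C)$ with the $\ZZ/2$ swapping the pair of nodes $P$. Your construction of an explicit inverse $u$ via the canonical involution $[-1]$ and the stacky quotient $[E/\iota]$ is more conceptual over the smooth locus and explains \emph{why} the gerbes at the branch points supply exactly the missing elliptic involution; the paper's argument is more pedestrian but handles the boundary and the special $j$-invariants by direct inspection. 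The two are complementary rather than equivalent.

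Two concrete problems with your version. First, a technical slip: $\tilde C \times_C E$ is \emph{not} \'etale over $\tilde C$. Locally at a ramification point ($E = \Spec k[t]$, $\iota : t \mapsto -t$, $C = \Spec k[t^2]$, $\tilde C = [\Spec k[u]/\mu_2]$ with $u^2 = t^2$) the fibre product is $[\Spec k[u,t]/(u^2-t^2)\,/\mu_2]$, which is nodal. You want $\tilde C := [E/\iota]$ (canonically the root stack along the branch divisor) and $\tilde C' := E$ with the universal $\ZZ/2$-torsor projection, or equivalently the normalization of your fibre product. Second, and more seriously, the mechanism you propose for $u \circ w \simeq \mathrm{id}$ --- ``$\tau$ is the unique nontrivial involution of $C'$ fixing $q$'' --- fails precisely at the boundary you flag as the obstacle. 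The source of the boundary admissible cover is the banana $C'_0 \cup_P C'_1$, which as a one-pointed curve is semistable but not stable and has automorphism group $\GG_m \rtimes \ZZ/2$; it carries a one-parameter family of involutions fixing $q$, so the deck involution is not intrinsically characterised there, and the literal quotient $D/[-1]$ of the nodal cubic is a \emph{smooth} $\PP^1$ with only two simple branch points (the third is hidden at the node), so it is not the correct limit of the family $E/\iota$. What is actually needed at the boundary is the uniqueness of the admissible cover of type $\Xi$ stabilising to the nodal cubic together with the identification $\Aut(C'\to C) = \Aut(P) = \ZZ/2 = \Aut(D)$ --- which is exactly the computation the paper performs. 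So your plan is sound away from the boundary, but the boundary case must be argued as in the paper (or by an equivalent admissible-covers degeneration argument in families), not by the uniqueness of the involution.
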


\begin{figure}
    \centering
    \begin{tikzpicture}
    \draw (.2,0) .. controls (-2,1) .. (.2,2);
    \draw (-.2,0) .. controls (2,1) .. (-.2,2);
    \node at (-2, 1){$C'_1$};
    \node at (2, 1){$C'_0$};
    \fill[white] (.65, .375) circle (.07cm);
    \draw (.65, .375) circle (.07cm);
    \fill (1.45, 1) circle (.07cm);
    \fill (-1.45, 1) circle (.07cm);
    \fill (-.65, .375) circle (.07cm);
    \draw[->] (0, -.5) to (0, -1.5);
    \draw[->] (2.5, 1) to (5.5, 1);
    \begin{scope}[shift={(0, -2)}]
    \draw (.2,.2) .. controls (-1,-.375) .. (-2,-.5);
    \draw (-.2,.2) .. controls (1,-.375) .. (2,-.5);
    \fill[white] (.65, -.18) circle (.07cm);
    \draw (.65, -.18) circle (.07cm);
    \fill (1.45, -.44) circle (.07cm);
    \fill (-1.45, -.44) circle (.07cm);
    \fill (-.65, -.18) circle (.07cm);
    \end{scope}
    \begin{scope}[shift={(7, 1)}, scale=1.2]
    \draw[variable = \x, domain=-1.3:1.3] plot (-\x*\x+1, \x*\x*\x-\x); 
    \fill[white] (1, 0) circle (.07cm);
    \draw (1, 0) circle (.07cm);
    \node at (1.5, 0){$D$};
    \end{scope}
    \end{tikzpicture}
    \caption{The admissible cover $C'\to C$ which stabilizes to the nodal cubic ${C'}^{st} = D$. The components of $C'$ are labeled $C'_0, C'_1$ and they each doubly cover a component of $C$. The pair of nodes $P$ is the intersection $C'_0 \cap C'_1$ of the components}
    \label{fig:singularcubicadmcover}
\end{figure}

\begin{proof}
%\YP{sorry, but I don't understand the very long proof.}
%\Leo{This is just what we did when you visited in Leiden. I wrote up what we talked about. }
The map on coarse moduli spaces is an isomorphism $\PP^1 \longequals \PP^1$. It remains to show the stack structure is the same; i.e., the automorphism groups of the admissible covers $C'\to C$ are the same as that of the stabilization $D = {C'}^{st}$. We already checked this for a generic elliptic curve in the proof of Lemma \ref{lem:propbirational}. 

We want to show $\Aut(C'\to C) = \Aut(D)$. Again, it helps that automorphisms of the covering map are a subgroup of automorphisms of the source $\Aut(C'\to C) \subseteq \Aut(C')$. These automorphisms must send ramification points to ramification points, and it will be clear that they also send the preimage $I \in C'$ of infinity to itself.

For all the smooth elliptic curves $D$, the source is already stable $C' \longequals D$ and the unique map $C' \to C = \PP^1$ is the quotient by the elliptic involution. For all smooth elliptic curves with automorphism group $\ZZ/2$, this identifies $\Aut(C'\to C) \simeq \Aut(D)$. The smooth curves $j = 0, 1728$ remain, as does the singular cubic. 

The curve $j = 1728$ has equation
\[y^2 = x^3 - x.\]
The automorphism group $\ZZ/4$ is generated by $(x, y) \mapsto (-x, iy)$. This commutes with the automorphism $x \mapsto -x$ of $\PP^1$. Likewise $j = 0$ has equation
\[y^2 = x^3 - 1\]
Letting $\zeta$ be a 6th root of unity, the automorphism group is generated by $(x, y) \mapsto (\zeta^2 x, \zeta^3 y)$. This also commutes with an automorphism of $\PP^1$.

For the singular elliptic curve $D$, the preimage is an admissible cover $C' \to C$ with both source and target reducible. See Figure \ref{fig:singularcubicadmcover}. Each consists of two components, labeled $C'_i, C_i$ for $i = 0, 1$. Assume the point in $I \subseteq C'$ lies on $C'_0$. 

The map restricts to two double covers $C'_i \to C_i$ of $\PP^1$'s. The preimage $P \subseteq C'$ of the singular point of $C$ is a pair of nodes joining $C'_0, C'_1$. 

Any automorphism of $C'$ must restrict to an automorphism of each component, because of the marked point. Any automorphism $\varphi$ of $C'$ restricts to an automorphism of the pair of nodes $P$ which determines $\varphi$. This is because $\varphi$ must preserve at least three points on each component, the nodes and the ramification points. All such automorphisms lie over $C$, so $\Aut(C'\to C) = \Aut(P) = \ZZ/2$. This is the same automorphism group of the singular stabilization $D = {C'}^{st}$, the nodal cubic.

\end{proof}

The map $\phi : \Kspt \to \Kbpt$ is also an isomorphism by Remark \ref{rmk:phiisomonepoint}, so Theorem \ref{thm:genusgvgenuszero} merely says that quantum $K$ invariants on $\Ms[1, 1] = \PP(4, 6)$ are equivariant Euler characteristics on its natural $S_3$-cover by $\Mxpt$.

This cover is pulled back from the quotient $\PP^1 \to \PP(2, 3)$ by $S_3$ acting on $\lambda \in \PP^1$. This map classically parameterizes the Legendre form
\[y^2 = x(x-1)(x - \lambda)\]
of an elliptic curve. The map $\Mxpt \to \Kbpt = \Ms[1, 1]$ then fits in a pullback square
\[
\begin{tikzcd}
\Mxpt \ar[r] \ar[d] \pb      &\Ms[1, 1] \ar[d]       \\
\PP^1 \ar[r, "/S_3"]       &\PP(2, 3),
\end{tikzcd}
\]
with vertical arrows $\mu_2$-gerbes. 
%\Leo{Where $-_c$ is the component where all the marked points are twisted.}
%\YP{With your permission, I'd prefer the notation to be $\Ms[0, 4] (B\ZZ/2, tw^4)$. In fact, I am hoping that our notation can be more suggestive. I'd prefer to change all $\Mx, \Kb$ etc..}
%\Leo{This would be quite a violent change. I think it's simpler to mention that $-_c$ is in the data of $\Xi$, so it's fine.}

\begin{comment}
    
\subsection{Branch Morphism}

There is a finite map $br : \Kb \to \Ms[0, n]$, a version of the Hurwitz branch map. It is a finite stacky map in general. For $n=4$ as above, it is the trivial $\mu_2$-gerbe 
\Leo{NOT a trivial $\mu_2$-gerbe! Make sure this argument makes sense; might have to delete everything. Remember the target of the branch should be \emph{unordered} marked points on $\PP^1$  with nontrivial gerbes at marked points?}
\[br : \PP(4, 6) \to \PP(2, 3).\]
We have $br_*[\OO^{vir}_{\Kb}] = [\OO^{vir}_{\Ms[0, 4]}]$ in this special case by Example \ref{ex:pfwdvfctrivialgerbes}. 
\Leo{Is this a trivial gerbe???? If not, nevermind!}

The $\psi$ classes of $\Kb$ are not pulled back from $\Ms[0, 4]$, so we cannot use the projection formula immediately. This is because $\Kb$ has a stacky curve $\tilde C$, while $\Ms[0, 4]$ only has its coarse moduli space $C$. The $\psi$ classes $\bar L_i$ of $C$ are squares of the $\psi$ classes $L_i$ of $\tilde C$ because it is simply ramified at each point.

\end{comment}

\bibliographystyle{alpha}
\bibliography{zbib}

\end{document}